\documentclass[a4paper,reqno]{amsart}
\usepackage[initials]{amsrefs}
\usepackage{xcolor}
\usepackage{enumitem}
\usepackage{amsmath,amssymb,amsthm,mathrsfs,latexsym}
\usepackage{hyperref}
\hypersetup{colorlinks=true,linkcolor=blue,citecolor=blue,urlcolor=blue}
\evensidemargin=\oddsidemargin
\numberwithin{equation}{section}

\newtheorem{theorem}{Theorem}[section]
\newtheorem{lemma}[theorem]{Lemma}
\newtheorem{proposition}[theorem]{Proposition}
\newtheorem{corollary}[theorem]{Corollary}
\theoremstyle{remark}
\newtheorem{remark}[theorem]{Remark}
\theoremstyle{definition}
\newtheorem{definition}[theorem]{Definition}

\newcommand{\A}{\mathcal{A}}

\newcommand{\Apn}[1]{\A^+_{#1}}
\newcommand{\Aple}[1]{\A^+_{\le #1}}

\newcommand{\zA}{\zeta_{\A}}

\newcommand{\epsq}{\varepsilon_q}
\newcommand{\Gc}{\mathcal{G}}

\newcommand{\disc}{\operatorname{disc}}

\title[The first moment over the even hyperelliptic ensemble]
{The first moment of quadratic Dirichlet $L$-functions \\
in the even hyperelliptic ensemble}

\author{Hwanyup Jung}
\address{Department of Mathematics Education, Chungbuk National University,
Cheongju 28644, Republic of Korea}
\email{hyjung@chungbuk.ac.kr}
\subjclass[2020]{Primary 11M38; Secondary 11G20, 11R59}
\keywords{Quadratic Dirichlet $L$-functions, function fields, hyperelliptic ensemble, first moments, secondary terms, Poisson summation}

\begin{document}
\setlength{\abovedisplayskip}{4pt plus 2pt minus 1pt}
\setlength{\belowdisplayskip}{4pt plus 2pt minus 1pt}
\setlength{\abovedisplayshortskip}{1pt plus 1pt}
\setlength{\belowdisplayshortskip}{2pt plus 1pt minus 1pt}

\begin{abstract}
We establish an asymptotic formula for the first moment of the central values
of quadratic Dirichlet $L$-functions in the even hyperelliptic ensemble
$\mathcal H_{2g+2}$ over $\mathbb F_q[x]$, for every fixed odd prime power $q$.
Working with the completed $L$-function yields an exact two-term central-value
formula with truncation levels $g$ and $g-1$; combining the two truncations
before the square-dual contour shifts leaves the three cubic points
$z^3=q^{-4}$ as the only secondary poles, and they contribute the secondary
term $q^{2g/3+2}\bigl(a_g\,g+b_g\bigr)$ with an error
$O_\varepsilon(q^{g(1+\varepsilon)/2})$. The coefficients are real and
$m\mapsto(a_m,b_m)$ has minimal period exactly three, so the coefficient of
$g$ at the order $q^{2g/3}$ depends on $g$ modulo $3$, in contrast with the
earlier even-degree formulas; the difference is confirmed numerically.
\end{abstract}

\maketitle


\section{Introduction}\label{sec:intro}

Let $q$ be an odd prime power.  We study the first moment of quadratic Dirichlet
$L$-functions at the central point over the even hyperelliptic ensemble,
\begin{equation}\label{eq:moment_intro}
M(g)
:=
\sum_{D\in\mathcal H_{2g+2}}
L\!\left(\tfrac12,\chi_D\right),
\end{equation}
as $g\to\infty$ with $q$ fixed.  Here $\mathcal H_{2g+2}$ is the set of monic square-free
polynomials of degree $2g+2$ in $\mathbb F_q[x]$, and $\chi_D$ is the primitive quadratic
character attached to $D$ (see \S\ref{sec:ff}).

For the odd hyperelliptic ensemble $\mathcal H_{2g+1}$, the first moment was
obtained by Andrade and Keating \cite{AK12}, who also formulated conjectures
for the higher moments \cite{AK14} following the recipe of Conrey, Farmer,
Keating, Rubinstein and Snaith \cite{CFKRS}. Florea \cite{Fl17-1} identified
a secondary term of order $gq^{(2g+1)/3}$ and reduced the error to
$O_\varepsilon(q^{g(1+\varepsilon)/2})$; see \cite{Fl17-2} for the second and
third moments and \cite{Fl17-3} for the fourth. For the even hyperelliptic ensemble $\mathcal H_{2g+2}$,
Jung \cite{Ju20} and Andrade--MacMillan \cite{AM-21} obtained the principal
term and the same error exponent, with lower-order terms of order
$gq^{2g/3}$ whose coefficients do not depend on $g\bmod3$. The results of
\cite{Fl17-1}, \cite{Ju20} and \cite{AM-21} are all stated for prime $q$
with $q\equiv1\bmod4$. For an arbitrary odd prime power $q$, it is shown in
\cite{Ju26} that the complete secondary contribution over
$\mathcal H_{2g+1}$ is $q^{2g/3}\bigl(\alpha_gg+\beta_g\bigr)$, whose
dependence on $g$ has minimal period exactly three.

The central idea of this paper is to work with the \emph{completed
$L$-function}
$$
L^*(s,\chi_D):=\frac{L(s,\chi_D)}{1-q^{-s}}
$$
in place of $L(s,\chi_D)$ itself: for $D\in\mathcal H_{2g+2}$ the character
$\chi_D$ is even, so $L(s,\chi_D)$ carries the trivial factor $1-q^{-s}$,
and $L^*(s,\chi_D)$ has a symmetric functional equation (see \S\ref{sec:ff}). 
We therefore introduce the associated completed first
moment
\begin{equation}\label{eq:completed_moment_intro}
M^*(g)
:=
\sum_{D\in\mathcal H_{2g+2}}
L^*\!\left(\tfrac12,\chi_D\right).
\end{equation}
At the central point one has
$L\!\left(\tfrac12,\chi_D\right)=(1-q^{-1/2})L^*\!\left(\tfrac12,\chi_D\right)$,
and summing this identity over $D\in\mathcal H_{2g+2}$ gives
\begin{equation}\label{eq:recover}
M(g)
=
(1-q^{-1/2})M^*(g).
\end{equation}
The analysis is accordingly carried out for $M^*(g)$, and $M(g)$ is
recovered from \eqref{eq:recover} at the end.

A second departure from the earlier treatments concerns the ground field.
Neither a primality nor a congruence condition is imposed on $q$: throughout
this paper $q$ is an arbitrary odd prime power. The obstruction is that in
general the quadratic Gauss sums over $\mathbb F_q[x]$ carry a root-of-unity
phase, which propagates into the Poisson summation formula built from them.
We therefore work with a phase-normalized Gauss sum, into whose definition
that phase is absorbed, and with the Poisson summation formula it yields;
both are free of the phase for every odd prime power $q$
(see \S\ref{sec:poisson}; cf. \cite[Section~9]{Fl17-1}).

Let $\zA$ denote the zeta function of $\A:=\mathbb F_q[x]$, so that
$\zA(s)=\bigl(1-q^{1-s}\bigr)^{-1}$ (see \S\ref{sec:ff}), and
$$
\mathscr P(s)
:=
\prod_P
\left(
1-\frac{1}{|P|^s(|P|+1)}
\right),
$$
where the product is taken over all monic irreducible polynomials $P\in\A$.

Our main result is the following.

\begin{theorem}\label{thm:main}
Let $q$ be a fixed odd prime power. For every $\varepsilon>0$, as
$g\to\infty$, one has
\begin{equation}\label{eq:main}
M(g)
=
\mathcal M_{\mathrm{prin}}(g)
+
\mathcal S(g)
+
O_\varepsilon\!\left(q^{g(1+\varepsilon)/2}\right),
\end{equation}
where
\begin{equation}\label{eq:Mprin-intro}
\mathcal M_{\mathrm{prin}}(g)
:=
\frac{\mathscr P(1)}{2\zA(2)}
q^{2g+2}
\left[
(2g+2)
+
\frac{4}{\log q}
\frac{\mathscr P'}{\mathscr P}(1)
+
2\zA\!\left(\tfrac12\right)
\right]
\end{equation}
and
\begin{equation}\label{eq:S-intro}
\mathcal S(g)
:=
q^{2g/3+2}
\left(
a_gg+b_g
\right).
\end{equation}
For $m\in\mathbb Z$, the coefficients $a_m$ and $b_m$ are real and are given
by \eqref{eq:ab-def}, and the sequence
$m\mapsto(a_m,b_m)$ has minimal period exactly three.
\end{theorem}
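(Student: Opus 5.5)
We prove the asymptotic for $M^*(g)$ and then transfer it to $M(g)$ through \eqref{eq:recover}. The argument follows Florea's method \cite{Fl17-1}, adapted to the completed $L$-function and to the phase-normalized Poisson summation of \S\ref{sec:poisson}.

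\textbf{Exact central-value formula.} For $D\in\mathcal H_{2g+2}$, $L^*(s,\chi_D)$ is a polynomial in $u=q^{-s}$ of degree $2g$ with a symmetric functional equation. Writing $L(s,\chi_D)=\sum_{f\ \mathrm{monic}}\chi_D(f)|f|^{-s}$, the coefficients of $L^*$ are the partial sums $\sum_{\deg f\le n}\chi_D(f)$. Applying the functional equation at $s=\tfrac12$ then gives an exact two-term formula
$$L^*(\tfrac12,\chi_D)=\sum_{\deg f\le g}\frac{\chi_D(f)}{|f|^{1/2}}\,w_g(\deg f)+\sum_{\deg f\le g-1}\frac{\chi_D(f)}{|f|^{1/2}}\,w_{g-1}(\deg f),$$
with explicit elementary weights coming from the truncated geometric factor $(1-q^{-1/2})^{-1}$. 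Summing over $D$ splits $M^*(g)$ into two truncated character sums, at levels $g$ and $g-1$.

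\textbf{Sieving and the main term.} We detect square-freeness by Möbius inversion $\sum_{h^2\mid D}\mu(h)$, with a cutoff on $\deg h$. Large $h$ is handled by the Lindelöf-type bound $L(\tfrac12,\chi)\ll q^{\varepsilon g}$, which contributes $O(q^{g(1+\varepsilon)/2})$. For small $h$ we sum $\chi_{D}(f)$ over all monic $D$ of degree $2g+2-2\deg h$ coprime to $f$, using the phase-normalized Poisson summation in \S\ref{sec:poisson}. The dual variable $V$ then splits into $V=0$, which can only occur when $f=\square$, and $V\neq0$.

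The $V=0$ term with $f=\square$ gives the principal term. Its generating series in $u$ is essentially $\zA(2s)\mathscr P$-type Euler product divided by $\zA(2)$. A double pole at $u=q^{-1}$, produced by combining the two truncations, yields the $(2g+2)$ term together with the $\mathscr P'/\mathscr P$ constant. The $2\zA(\tfrac12)$ constant comes from the boundary terms of the two truncations. Multiplying by $(1-q^{-1/2})$ then produces \eqref{eq:Mprin-intro}.

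\textbf{Secondary term.} The terms with $V\neq0$, $V=\square$ are the source of the secondary term. We first combine the level-$g$ and level-$(g-1)$ truncations into a single generating function. Only after that do we write the sum over $f$ and $V$ as a double contour integral of a ratio of zeta factors with an Euler product that converges in a wider region. This ordering is the key point: taken separately, each truncation has extra poles, at $z=\pm q^{-1}$ and $z^2=q^{-2}$ type points, and these cancel in the combination. Shifting the $z$-contour leftward, the surviving poles are exactly the three points with $z^3=q^{-4}$. Each is a double pole, which gives size $g\,q^{2g/3}$.

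Evaluating the residues, summing over the cube roots $\omega^j q^{-4/3}$, and multiplying by $(1-q^{-1/2})$ gives $q^{2g/3+2}(a_gg+b_g)$. Here $a_m$ and $b_m$ are finite sums $\sum_j\omega^{jm}c_j$ and $\sum_j\omega^{jm}d_j$, defined as \eqref{eq:ab-def}. They are real because the residues at the two conjugate cube roots are conjugate. The sequence is $3$-periodic by construction. Minimal period exactly three amounts to showing that the contribution of the non-real roots is nonzero, i.e. that the coefficients $c_1$ and $d_1$ are not both zero. We would verify this from the explicit closed form, by checking that an Euler product is nonvanishing and that a rational function of $q^{1/3}$ is nonzero.

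The remaining non-square $V$ terms are bounded as in \cite{Fl17-1}, by the Lindelöf bound for $L(\cdot,\chi_V)$ together with the contour shift to $\Re=\tfrac12+\varepsilon$, giving $O(q^{g(1+\varepsilon)/2})$.

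\textbf{Main obstacle.} The hardest step will be the secondary-term computation: tracking exactly the weights of the combined two-level truncation so that the spurious poles cancel, and computing the double-pole residues in closed form precisely enough to prove minimal period three.
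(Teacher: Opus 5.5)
Your sieving step fails, and it is the step that determines the error term. Write $D=\ell^2D'$, with $\ell$ your M\"obius variable. For $\deg\ell=k$ there are $\asymp q^{k}$ square-free $\ell$ and $q^{2g+2-2k}$ choices of $D'$. So even a bound $q^{\varepsilon g}$ for each pair $(\ell,D')$ leaves $\asymp q^{2g+2-k+\varepsilon g}$ from that layer. Since $k\le g+1$, every layer you treat this way costs at least $q^{g+1}$. That is far above $q^{g(1+\varepsilon)/2}$, and above the secondary term $gq^{2g/3}$ you want to isolate.

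A sharper estimate for each fixed $\ell$ cannot remove this loss. For $k=g+1$ one has $D'=1$, and the character $\chi_{\ell^2}(f)=\mathbf 1_{(f,\ell)=1}$ is principal, so no Lindel\"of saving exists. Each truncated sum is then a sum of nonnegative terms of total size $\gg q^{g/2}/g$, so that layer alone is about $q^{3g/2}/g$ when bounded termwise. The saving must come from cancellation among the values $\mu(\ell)$, and a termwise bound discards it. The same factor $q^{\deg\ell}$ is lost in your small range if the nonsquare-$V$ estimate is applied to each $\ell$ separately.

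The repair is to impose no cutoff, and to sum $\mu(\ell)$ over all $\ell$ of each degree coprime to $f$ before estimating anything. The identity $\sum_{(\ell,f)=1}\mu(\ell)u^{\deg\ell}=(1-qu)\sum_{C\mid f^\infty}u^{\deg C}$ turns your sieve into exactly Lemma~\ref{lem:sigma}. There only $\ll q^{\varepsilon g}$ polynomials $C\mid f^\infty$ occur in each degree, and the layer $\deg C=g+1$ can be discarded at cost $q^{N/2+\varepsilon g}$. The nonsquare bound of Lemma~\ref{lem:nonsquare-coeff} then applies uniformly in $C$.

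There is a second, smaller omission. After you take the double pole of the $V=0$ term, the Perron contour can only be moved to $|u|=\rho<q$, because $\mathcal C^*(u^2)$ is continued only to $|u|<q$. The remaining integral $\mathcal R_0(g;\rho)$ of \eqref{eq:R0} has only the bound $q^{2g+2}\rho^{-N}>q^{g+2}$, so it is not an error term. In the paper it is cancelled exactly by the square-dual residues at $w=\pm z^{1/2}$ (Proposition~\ref{prop:A-total}). Your sketch omits these residues when it says that the only poles surviving in the $V=\square$ part are the three points with $z^3=q^{-4}$.

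Apart from these two points, your outline follows the paper: the weighted two-term formula, which is equivalent to Lemma~\ref{lem:AFE}, the double poles at the cube roots, and the nonvanishing argument for minimal period three.
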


The principal term $\mathcal M_{\mathrm{prin}}(g)$ and the error exponent are
those of \cite[Theorem~1.4]{AM-21} and \cite[Theorem~1.1]{Ju20}. 
The secondary term is again of order $gq^{2g/3}$, but its coefficients, unlike
theirs, depend on $g\bmod3$; Remark~\ref{rem:numerics} shows numerically that
this is the shape realized by $M(g)$.

Passing to the completed $L$-function is what makes it possible to treat the
even hyperelliptic ensemble along the lines followed for the odd hyperelliptic
ensemble in \cite{Fl17-1} and, for the period-three secondary term, in
\cite{Ju26}. It also avoids a computation that the earlier even-degree
treatments do not. In \cite{AM-21} and \cite{Ju20} the principal contribution
is evaluated by
residues at the origin, which brings in the Taylor coefficients of the Euler
product there and, in \cite{AM-21}, an induction on $g$ carried out in an
appendix. Here, as in \cite[Section~8]{Fl17-1}, the corresponding contour
integral is retained rather than evaluated: it reappears with the opposite
sign among the square-dual residues, the two cancel exactly, and no residue
at the origin is computed at all. 
This is a second point at which the present treatment follows the odd-degree
argument of \cite{Fl17-1} rather than the even-degree ones of \cite{AM-21, Ju20}.

The passage to $L^*$ should serve the higher moments as well. Since $L^*$
satisfies the exact two-term central-value formula \eqref{eq:AFE}, the
$k$-th moment of $L^*$ expands into $2^k$ blocks of a single shape, indexed
by truncation levels $N_1,\dots,N_k\in\{g,g-1\}$, and the factor
$(1-q^{-1/2})^k$ returns the $k$-th moment of
$L\!\left(\tfrac12,\chi_D\right)$; the second and third moments over
$\mathcal H_{2g+2}$, obtained for $\mathcal H_{2g+1}$ in \cite{Fl17-2},
should be within reach in this way. With the four-sum identity of
\cite[Lemma~2.1]{Ju20} and \cite[Lemma~2.4]{AM-21} the same moment expands
instead into $4^k$ blocks of two kinds, and the bookkeeping grows quickly
with $k$ (Remark~\ref{rem:four-sums}).

The paper is organized as follows. Section~\ref{sec:setup} develops
the basic function-field notation, the completed $L$-function and
its exact two-term central-value formula, the square-free
character-average identity, the normalized Gauss sums, the Poisson
summation formula, and the decomposition of $M^*(g)$.
Section~\ref{sec:main-term} evaluates the zero-dual contribution.
Section~\ref{sec:square-dual} analyzes the nonzero square dual
variables, proves the principal contour cancellation, evaluates the
secondary residues, and establishes the exact period-three
structure. Section~\ref{sec:nonsquare} proves a uniform fixed-$V$
coefficient estimate and uses it to bound the nonsquare contribution.
Finally, Section~\ref{sec:proof} combines the three contributions and
completes the proof of Theorem~\ref{thm:main}, and closes with a numerical
verification of the period-three secondary term for small $q$ and $g$
(Remark~\ref{rem:numerics}).

\section{Background and setup of the problem}\label{sec:setup}

This section collects the material used below: notation and standard facts
about $\A=\mathbb F_q[x]$ and its quadratic characters, the exact two-term
formula for the central value of the completed $L$-function, the character
average over $\mathcal H_{2g+2}$, the phase-normalized Gauss sums and the
Poisson summation formula, and the decomposition of $M^*(g)$.

\subsection{Basic facts about function fields and \texorpdfstring{$L$}{L}-functions}\label{sec:ff}

Throughout the paper, $q$ denotes an odd prime power and $\A=\mathbb F_q[x]$.
For a nonzero $f\in\A$, we write $d(f):=\deg f$ and define the norm of $f$ by $|f|:=q^{d(f)}$.
We write $\A^+$ for the set of monic polynomials, $\mathcal P$ for
the set of monic irreducible polynomials, and $\mathcal H$ for the
set of monic square-free polynomials in $\A$; in particular
$1\in\mathcal H$.
Elements of $\mathcal P$ are called \emph{prime polynomials}, and the
letter $P$ is reserved for them throughout.
Unless otherwise stated, a sum over $h\mid f$ is taken over the monic
divisors $h\in\A^+$ of $f$.

In the Vinogradov notation $f\ll g$, equivalently $f=O(g)$, subscripts
indicate the parameters on which the implied constant may depend; since $q$
is fixed throughout, it may always depend on $q$ as well, whether or not
this is indicated.

For any subset $\mathcal{U}$ of $\A^{+}$, write $\mathcal{U}_{n} = \{f \in \mathcal{U} : d(f) = n\}$
and $\mathcal{U}_{\le n} = \{f \in \mathcal{U} : d(f) \le n\}$; both are
understood to be empty whenever $n<0$.
The \emph{even hyperelliptic ensemble} of degree $2g+2$ is the
family $\mathcal H_{2g+2}$.

For $\operatorname{Re}(s)>1$, the zeta function of $\A$ is given by
$$
\zeta_{\A}(s):=\sum_{f\in\A^+}\frac{1}{|f|^{s}}
=\prod_{P\in\mathcal P}\left(1-\frac{1}{|P|^{s}}\right)^{-1}
=\bigl(1-q^{1-s}\bigr)^{-1}
$$
(see \cite[Chapter~2]{Ro02}); the last expression continues $\zeta_\A$
meromorphically to $\mathbb C$, and we write $\zeta_\A$ also for the
continuation, so that $\zeta_\A(\tfrac12)=(1-q^{1/2})^{-1}$. In the variable
$u=q^{-s}$ we write $\mathcal Z_{\A}(u):=(1-qu)^{-1}$, so that
$\mathcal Z_{\A}(q^{-s})=\zeta_{\A}(s)$ and
$\sum_{f\in\A^+}u^{d(f)}=\mathcal Z_\A(u)$ for $|u|<q^{-1}$; below,
$\mathcal Z_\A$ is also used as a meromorphic function on
$\mathbb C\setminus\{q^{-1}\}$.

For $a\in\A$ and $P\in\mathcal{P}$, the quadratic residue symbol
$\bigl(\frac{a}{P}\bigr)\in\{0,\pm1\}$ is defined by
$\left(\frac{a}{P}\right)\equiv a^{(|P|-1)/2}\bmod P$,
so that $\bigl(\frac{a}{P}\bigr)=0$ exactly when $P\mid a$. For
nonconstant $f\in\A^+$ with prime factorization $f=\prod_PP^{e_P}$, the
Jacobi symbol is obtained by multiplicative extension,
$$
\left(\frac{a}{f}\right):=\prod_P\left(\frac{a}{P}\right)^{e_P},
$$
with the convention $\bigl(\frac{a}{1}\bigr)=1$; thus
$\bigl(\frac{a}{f}\bigr)=0$ precisely when $(a,f)\ne1$. For fixed
$f\in\A^+$, the map $a\mapsto\bigl(\frac{a}{f}\bigr)$ depends only on
$a$ modulo $f$; we write $\psi_f(a):=\bigl(\frac{a}{f}\bigr)$ for $a\in\A$, a
quadratic Dirichlet character modulo $f$, which need not be primitive.
Letting $f_1$ be the \emph{odd part} of $f$, that is, the unique square-free
divisor of $f$ for which $f/f_1$ is a square, multiplicativity of the Jacobi
symbol gives
\begin{equation}\label{eq:psi-induced}
\psi_f(a)=\left(\frac{a}{f_1}\right)\mathbf 1_{(a,f)=1}
\qquad(a\in\A),
\end{equation}
so that $\psi_f$ is induced by the primitive character $\psi_{f_1}$ modulo
$f_1$; in particular $\psi_f$ is principal if and only if $f$ is a square.

Let $\chi_2:\mathbb F_q^\times\to\{\pm1\}$ be the quadratic character
defined by $\chi_2(c):=c^{(q-1)/2}$, regarded as taking values $\pm1$ in $\mathbb Z$.
Since $q$ is odd, one has $\bigl(\frac{c}{P}\bigr)=\chi_2(c)^{d(P)}$ for
$c\in\mathbb F_q^\times$ (see \cite[Chapter~3]{Ro02}), and therefore
\begin{equation}\label{eq:constant-symbol}
\psi_f(c)=\left(\frac{c}{f}\right)=\chi_2(c)^{d(f)}
\qquad
(c\in\mathbb F_q^\times,\ f\in\A^+).
\end{equation}
As usual, a Dirichlet character $\chi$ modulo $f$ is called \emph{even} if
$\chi(c)=1$ for every $c\in\mathbb F_q^\times$, and \emph{odd} otherwise.
By \eqref{eq:constant-symbol}, $\psi_f$ is even precisely when $d(f)$ is
even.

For coprime $A,B\in\A^+$, the quadratic reciprocity law over $\A$
(see \cite[Chapter~3]{Ro02}) states that
$$
\left(\frac{A}{B}\right)
=
(-1)^{\frac{q-1}{2}d(A)d(B)}
\left(\frac{B}{A}\right).
$$

For $D\in\mathcal H_{2g+2}$, let $\chi_D:=\bigl(\frac{D}{\cdot}\bigr)$ be the
quadratic character attached to $D$, as in \cite{Fl17-1,Ju20,AM-21}. Since
$d(D)=2g+2$ is even, the reciprocity sign above is $1$ whenever $(D,f)=1$,
and both symbols vanish otherwise; hence
\begin{equation}\label{eq:reciprocity-even}
\chi_D(f)=\left(\frac{D}{f}\right)=\left(\frac{f}{D}\right)=\psi_D(f)
\qquad(f\in\A^+),
\end{equation}
for every odd prime power $q$, with no congruence condition such as
$q\equiv1\bmod4$.
Since $D$ is square-free, its odd part is $D$ itself, so that
\eqref{eq:psi-induced} shows $\chi_D$ to be the primitive quadratic
character modulo $D$.

The following counting fact will be used repeatedly. Let $h\in\A^+$ and $m\ge d(h)$. Every $H\in\A_m^+$ is
uniquely written as $H=A+Bh$ with $d(A)<d(h)$ and $B\in\A_{m-d(h)}^+$;
hence each residue class modulo $h$ contains exactly $q^{m-d(h)}$ monic
polynomials of degree $m$, and consequently
\begin{equation}\label{eq:full-period}
\sum_{H\in\A_m^+}\psi_h(H)
=
q^{m-d(h)}\sum_{A\bmod h}\psi_h(A).
\end{equation}

\subsubsection*{$L$-functions and the completed $L$-function}
Let $\chi$ be a Dirichlet character over $\A$. For
$\operatorname{Re}(s)>1$, the Dirichlet $L$-function attached to $\chi$ is
given by the absolutely convergent series and Euler product
$$
L(s,\chi)
:=
\sum_{f\in\A^+}\frac{\chi(f)}{|f|^s}
=
\prod_{P\in\mathcal P}
\left(1-\frac{\chi(P)}{|P|^{s}}\right)^{-1}
$$
(see \cite[Chapter~4]{Ro02}). In the variable $u=q^{-s}$ one has
$|f|^{-s}=u^{d(f)}$, so that, for $|u|<q^{-1}$,
$$
\mathcal L(u,\chi)
:=
L(s,\chi)
=
\sum_{n\ge0}
\Biggl(
\sum_{f\in\A_n^+}\chi(f)
\Biggr)u^n.
$$

For $D\in\mathcal H_{2g+2}$ the character $\chi_D$ is nonprincipal, $D$
being square-free and non-constant, and $\mathcal L(u,\chi_D)$ is a
polynomial in $u$ of degree at most $2g+1$ (see
\cite[Proposition~4.3]{Ro02}; cf. \cite[Section~2]{Ju20}).

Since $d(D)$ is even, $\mathcal L(u,\chi_D)$ has a trivial zero at $u=1$, so
that the \emph{completed $L$-function}
\begin{equation}\label{eq:Lstar_def}
\mathcal L^*(u,\chi_D)
:=
\frac{\mathcal L(u,\chi_D)}{1-u}
\end{equation}
is a polynomial in $u$ of degree exactly $2g$ with
$\mathcal L^*(0,\chi_D)=1$: it is the numerator of the zeta function of the
hyperelliptic curve $y^2=D(x)$, which has genus $g$ over $\mathbb F_q$, and
it satisfies the functional equation
\begin{equation}\label{eq:FE_Lstar}
\mathcal L^*(u,\chi_D)
=
(qu^2)^g
\mathcal L^*\!\left(\frac{1}{qu},\chi_D\right).
\end{equation}

Since $\mathcal L(u,\chi_D)$ is a polynomial in $u$, the function
$s\mapsto\mathcal L(q^{-s},\chi_D)$ is entire and continues $L(s,\chi_D)$
to all of $\mathbb C$; the central value $L(\tfrac12,\chi_D)$ is understood
as $\mathcal L(q^{-1/2},\chi_D)$.
Evaluating \eqref{eq:Lstar_def} at $u=q^{-1/2}$ therefore gives
$L\!\left(\tfrac12,\chi_D\right) = \mathcal L(q^{-1/2},\chi_D) = (1-q^{-1/2}) \mathcal L^*(q^{-1/2},\chi_D)$.
Since $u=q^{-s}$, the function $\mathcal L^*(u,\chi_D)$ is the completed
$L$-function $L^*(s,\chi_D)$ of \S\ref{sec:intro}; in particular
$L^*\!\left(\tfrac12,\chi_D\right)=\mathcal L^*(q^{-1/2},\chi_D)$. Summing
the identity above over $D\in\mathcal H_{2g+2}$ and using the definitions
\eqref{eq:moment_intro} and \eqref{eq:completed_moment_intro} yields the
recovery relation \eqref{eq:recover}.

\subsection{Two-term formula and preliminary lemmas}

We first identify the coefficients of the completed $L$-function
$\mathcal L^*(u,\chi_D)$.

\begin{lemma}
Let $D\in\mathcal H_{2g+2}$. Then one has 
\begin{equation}\label{eq:Lstar-formal-series}
\mathcal L^*(u,\chi_D)
=
\sum_{f\in\A^+}A_D^*(f)u^{d(f)},
\end{equation}
where
\begin{equation}\label{eq:Astar}
A_D^*(f)
:=
\sum_{\substack{h,m\in\A^+\\ hm=f}}
\frac{\chi_D(h)}{|m|}
=
\frac{1}{|f|}
\sum_{h\mid f}|h|\chi_D(h).
\end{equation}
The series in \eqref{eq:Lstar-formal-series} converges absolutely for
$|u|<q^{-1}$, and \eqref{eq:Lstar-formal-series} also holds as an identity
of formal power series in $u$.
\end{lemma}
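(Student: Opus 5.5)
The plan is to expand both factors of $\mathcal L^*(u,\chi_D)=\mathcal L(u,\chi_D)\cdot(1-u)^{-1}$ as power series in $u$ and multiply them. The point is to write $(1-u)^{-1}$ as a Dirichlet-type series over monic polynomials, so that the product becomes a Dirichlet convolution. Since there are exactly $q^n$ monic polynomials of degree $n$,
$$
\sum_{m\in\A^+}\frac{u^{d(m)}}{|m|}=\sum_{n\ge0}q^n\cdot q^{-n}u^n=\frac{1}{1-u},
$$
which converges absolutely for $|u|<1$. On the other side, $\mathcal L(u,\chi_D)=\sum_{h\in\A^+}\chi_D(h)u^{d(h)}$ converges absolutely for $|u|<q^{-1}$, because $|\chi_D(h)|\le1$ and $\sum_h|u|^{d(h)}=\mathcal Z_\A(|u|)<\infty$.

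For $|u|<q^{-1}$ both series converge absolutely, so their product may be rearranged freely. Grouping the terms by $f=hm$ and using $u^{d(h)}u^{d(m)}=u^{d(f)}$ (degrees add for monic polynomials) gives
$$
\mathcal L(u,\chi_D)(1-u)^{-1}=\sum_{f\in\A^+}\Bigl(\sum_{hm=f}\frac{\chi_D(h)}{|m|}\Bigr)u^{d(f)}.
$$
This is \eqref{eq:Lstar-formal-series} with the first expression for $A_D^*(f)$ in \eqref{eq:Astar}. Each $f$ has only finitely many factorisations $f=hm$ into monic factors, so the inner sums are finite. For the second expression in \eqref{eq:Astar}, substitute $m=f/h$, so that $|m|^{-1}=|h|/|f|$.

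For absolute convergence of \eqref{eq:Lstar-formal-series} itself, bound $|A_D^*(f)|\le\sum_{h\mid f}|h|/|f|\le\tau(f)$, where $\tau(f)$ is the number of monic divisors of $f$. Then
$$
\sum_f\tau(f)|u|^{d(f)}=\mathcal Z_\A(|u|)^2<\infty
\qquad\text{for }|u|<q^{-1}.
$$
Alternatively, the domination above already follows from the absolute convergence of the double sum over pairs $(h,m)$.

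For the formal power series statement, both sides have the same coefficient of $u^n$, namely $\sum_{d(f)=n}A_D^*(f)$, a finite sum. This is simply the Cauchy product of the two formal series, and it agrees with the analytic identity on the disc $|u|<q^{-1}$. Since $\mathcal L^*$ is a polynomial by the discussion after \eqref{eq:Lstar_def}, the formal identity holds as an identity in $\mathbb C[[u]]$.

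There is no real obstacle in this argument. The only point needing care is to justify the rearrangement, by working inside the common disc $|u|<q^{-1}$ where the convolution is absolutely convergent.
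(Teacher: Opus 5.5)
Your proposal is correct and matches the paper's proof essentially step for step. Both write $(1-u)^{-1}=\sum_{m\in\A^+}u^{d(m)}/|m|$, multiply it by $\sum_{h}\chi_D(h)u^{d(h)}$ inside $|u|<q^{-1}$ where both series converge absolutely, group the terms by $f=hm$ and substitute $m=f/h$, with the formal identity given by the same Cauchy product; your explicit bound $|A_D^*(f)|\le\tau(f)$ is a harmless addition, and the appeal to the polynomiality of $\mathcal L^*$ is not needed for the formal statement.
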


\begin{proof}
Regrouping the expansion of $\mathcal L(u,\chi)$ in \S\ref{sec:ff} gives
$\mathcal L(u,\chi_D)=\sum_{h\in\A^+}\chi_D(h)u^{d(h)}$, absolutely
convergent for $|u|<q^{-1}$, while
$\sum_{m\in\A^+}u^{d(m)}|m|^{-1}=(1-u)^{-1}$ for $|u|<1$, the coefficient of
$u^n$ on the left being $q^{-n}\#\A_n^+=1$. Multiplying
$\sum_{h\in\A^+}\chi_D(h)u^{d(h)}$ by $\sum_{m\in\A^+}u^{d(m)}|m|^{-1}$ ---
legitimate for $|u|<q^{-1}$, where both converge absolutely --- and
collecting the terms with $hm=f$ gives \eqref{eq:Lstar-formal-series} with the first expression
in \eqref{eq:Astar}; substituting $m=f/h$ turns it into the second. The same
computation is an identity of formal power series.
\end{proof}

Grouping \eqref{eq:Lstar-formal-series} according to degree expresses
$\mathcal L^*(u,\chi_D)$ as a power series in which the coefficient of $u^N$
is $\sum_{f\in\A_N^+}A_D^*(f)$. Since $\mathcal L^*(u,\chi_D)$ is a
polynomial of degree $2g$, these coefficients vanish for $N>2g$ -- although
the individual values $A_D^*(f)$ with $d(f)>2g$ do not -- and
\begin{equation}\label{eq:Lstar-by-degree}
\mathcal L^*(u,\chi_D)
=
\sum_{N=0}^{2g}
\Biggl(
\sum_{f\in\A_N^+}A_D^*(f)
\Biggr)u^N.
\end{equation}

\begin{lemma}[Exact two-term formula]\label{lem:AFE}
For every $D\in\mathcal H_{2g+2}$, one has
\begin{equation}\label{eq:AFE}
\mathcal L^*\!\left(\frac{1}{\sqrt q},\chi_D\right)
=
\sum_{f\in\A^+_{\le g}}
\frac{A_D^*(f)}{\sqrt{|f|}}
+
\sum_{f\in\A^+_{\le g-1}}
\frac{A_D^*(f)}{\sqrt{|f|}}.
\end{equation}
\end{lemma}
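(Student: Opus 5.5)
The plan is to use only the polynomial expansion \eqref{eq:Lstar-by-degree} together with the functional equation \eqref{eq:FE_Lstar}, with no analytic input. Write $c_N:=\sum_{f\in\A_N^+}A_D^*(f)$, so that $\mathcal L^*(u,\chi_D)=\sum_{N=0}^{2g}c_Nu^N$. First I will translate \eqref{eq:FE_Lstar} into a symmetry of the coefficients. Substituting the expansion into the right-hand side gives
\[
(qu^2)^g\sum_{N=0}^{2g}c_N(qu)^{-N}=\sum_{N=0}^{2g}c_N\,q^{g-N}u^{2g-N}.
\]
Comparing coefficients of $u^{2g-N}$ then yields $c_{2g-N}=q^{g-N}c_N$ for $0\le N\le 2g$.

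Next I evaluate at $u=q^{-1/2}$. Put $b_N:=c_Nq^{-N/2}$, so that $\mathcal L^*(q^{-1/2},\chi_D)=\sum_{N=0}^{2g}b_N$. The symmetry becomes
\[
b_{2g-N}=c_{2g-N}q^{-(2g-N)/2}=q^{g-N}c_Nq^{-g+N/2}=c_Nq^{-N/2}=b_N .
\]
So the sequence $(b_N)$ is palindromic about $N=g$.

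Then I split the sum as $\sum_{N=0}^{g}b_N+\sum_{N=g+1}^{2g}b_N$. Reindexing the second piece with $N\mapsto 2g-N$ turns it into $\sum_{N=0}^{g-1}b_N$. Hence $\mathcal L^*(q^{-1/2},\chi_D)=\sum_{N\le g}b_N+\sum_{N\le g-1}b_N$.

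Finally, since $|f|=q^N$ for $f\in\A_N^+$, we have $b_N=\sum_{f\in\A_N^+}A_D^*(f)/\sqrt{|f|}$. Summing over $N\le g$ and over $N\le g-1$ gives exactly the two sums in \eqref{eq:AFE}. These sums are finite, so no convergence issue arises.

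The only step needing care is the coefficient comparison in the functional equation, in particular checking the exponent of $q$ in the symmetry relation. I will handle it by matching powers of $u$ in the finite polynomial identity above, which uses that $\mathcal L^*$ has degree exactly $2g$ as stated in \S\ref{sec:ff}. I do not expect a genuine obstacle.
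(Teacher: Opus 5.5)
Your proposal is correct and is the argument the paper uses. The paper's proof notes that $\mathcal L^*(u,\chi_D)$ is a polynomial of degree $2g$ with coefficients $\sum_{f\in\A_N^+}A_D^*(f)$ satisfying \eqref{eq:FE_Lstar}, and then invokes the coefficient-symmetry argument of \cite[Lemma~3.3]{AK12} verbatim; your relation $c_{2g-N}=q^{g-N}c_N$, the palindromic $b_N$, and the reindexing of the upper half carry out that argument explicitly.
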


\begin{proof}
By \eqref{eq:Lstar-by-degree}, $\mathcal L^*(u,\chi_D)$ is a polynomial in
$u$ of degree $2g$ whose coefficient of $u^N$ is
$\sum_{f\in\A_N^+}A_D^*(f)$ for $0\le N\le2g$, and by \eqref{eq:FE_Lstar}
it satisfies the same functional equation as $\mathcal L(u,\chi_D)$ for
$D\in\mathcal H_{2g+1}$; the argument of \cite[Lemma~3.3]{AK12} therefore
applies verbatim and gives \eqref{eq:AFE}.
\end{proof}

\begin{remark}[Comparison with earlier central-value formulas]
\label{rem:four-sums}
For $\mathcal H_{2g+2}$, the exact central-value identity of
\cite[Lemma~2.1]{Ju20} and \cite[Lemma~2.4]{AM-21} expresses
$L(\tfrac12,\chi_D)$ as four sums: two Dirichlet-polynomial sums
coming from the functional equation, and two correction sums attached to the
trivial factor $1-u$. Dividing that factor out first makes
\eqref{eq:FE_Lstar} symmetric and absorbs its effect into the coefficients
$A_D^*(f)$ of \eqref{eq:Astar}, leaving the two sums of a single shape in
\eqref{eq:AFE}.
\end{remark}

\subsubsection*{The square-free character average}

The next lemma expresses the average over square-free polynomials in terms
of unrestricted polynomial sums. It is \cite[Lemma~2.2]{Ju20}, the even-degree
analogue of \cite[Lemma~2.2]{Fl17-1}; see also \cite{AM-21}. The proof opens
by replacing $\chi_D(h)$ with $\psi_h(D)$ through quadratic reciprocity, a
step taken in \cite{Fl17-1} under the hypothesis $q\equiv1\bmod4$; by
\eqref{eq:reciprocity-even} it is available here for every odd prime power
$q$, $d(D)$ being even, and the rest of the argument is unchanged.

\begin{lemma}[Character average over $\mathcal H_{2g+2}$]\label{lem:sigma}
For $h\in\A^+$, one has
$$
\sum_{D\in\mathcal H_{2g+2}}\chi_D(h)
=\sum_{\substack{C\in\A^+_{\le g+1}\\ C\mid h^\infty}}\sum_{H\in\A^+_{2g+2-2d(C)}}\psi_h(H)
-q\sum_{\substack{C\in\A^+_{\le g}\\ C\mid h^\infty}}\sum_{H\in\A^+_{2g-2d(C)}}\psi_h(H).
$$
Here $C\mid h^\infty$ means that every prime polynomial factor of $C$ divides $h$.
\end{lemma}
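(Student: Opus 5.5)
We start from reciprocity and then detect square-freeness with the Möbius function. By \eqref{eq:reciprocity-even}, $\chi_D(h)=\psi_h(D)$ for every $D\in\mathcal H_{2g+2}$. So the left side is $\sum_{D\in\A^+_{2g+2}}\mu^2(D)\psi_h(D)$. We write the square-free indicator as $\mu^2(D)=\sum_{A^2\mid D}\mu(A)$ and set $D=A^2B$. Since $\psi_h(A^2B)=\psi_h(A)^2\psi_h(B)$ and $\psi_h(A)^2=\mathbf 1_{(A,h)=1}$, this gives
\[
\sum_{D\in\mathcal H_{2g+2}}\chi_D(h)=\sum_{\substack{A\in\A^+_{\le g+1}\\(A,h)=1}}\mu(A)\sum_{B\in\A^+_{2g+2-2d(A)}}\psi_h(B).
\]

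Next we invert the sum over $A$ by generating functions in $u$. The coprimality condition is what produces the $C\mid h^\infty$ sums. Define
\[
S_h(u):=\sum_{\substack{A\in\A^+\\(A,h)=1}}\mu(A)u^{2d(A)}=\prod_{P\nmid h}(1-u^2)\ \text{evaluated Euler-factorwise, i.e. } \frac{\prod_{P\mid h}(1-u^{2d(P)})^{-1}}{\mathcal Z_\A(u^2)}.
\]
More precisely, $\sum_{(A,h)=1}\mu(A)u^{2d(A)}=\prod_{P\nmid h}(1-u^{2d(P)})$. This equals $\mathcal Z_\A(u^2)^{-1}\prod_{P\mid h}(1-u^{2d(P)})^{-1}$, and the last product is $\sum_{C\mid h^\infty}u^{2d(C)}$. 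Also $\mathcal Z_\A(u^2)^{-1}=1-qu^2$. So, as formal power series,
\[
S_h(u)=(1-qu^2)\sum_{C\mid h^\infty}u^{2d(C)}.
\]
The coefficient of $u^{2k}$ is therefore $c_k:=\#\{C\mid h^\infty:d(C)=k\}-q\,\#\{C\mid h^\infty:d(C)=k-1\}$. Only the coefficient identity is used, so no convergence issues arise.

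Substituting this coefficient identity, with $k=d(A)$ ranging over $0\le k\le g+1$, gives
\[
\sum_{k=0}^{g+1}\Bigl(\sum_{\substack{C\mid h^\infty\\ d(C)=k}}1-q\sum_{\substack{C\mid h^\infty\\ d(C)=k-1}}1\Bigr)\sum_{B\in\A^+_{2g+2-2k}}\psi_h(B).
\]
The first part is exactly the first double sum in the statement, with $C\in\A^+_{\le g+1}$.

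In the second part we put $j=k-1$, so $0\le j\le g$ and $2g+2-2k=2g-2j$. This gives $-q\sum_{C\mid h^\infty,\,d(C)\le g}\sum_{H\in\A^+_{2g-2d(C)}}\psi_h(H)$, which is the second term.

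The main step is the Euler product manipulation, and it is routine. The only genuinely paper-specific point is the reciprocity step, which is supplied by \eqref{eq:reciprocity-even} with no congruence hypothesis on $q$.
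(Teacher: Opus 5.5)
Your proof is correct and follows the same route as the argument the paper cites from \cite{Ju20} and \cite{Fl17-1}: detect square-freeness by $\sum_{A^2\mid D}\mu(A)$, then compare coefficients in $\sum_{(A,h)=1}\mu(A)u^{d(A)}=(1-qu)\sum_{C\mid h^\infty}u^{d(C)}$. One small remark: with this paper's conventions, $\chi_D(h)=\bigl(\frac{D}{h}\bigr)=\psi_h(D)$ holds by definition, so the first step does not even need \eqref{eq:reciprocity-even}.
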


\subsubsection*{The function-field Perron formula}

We will repeatedly use the following coefficient-extraction
identities, commonly referred to as the function-field Perron
formula; see \cite[Lemma~2.8]{AM-21} and
\cite[Fact~3.1]{Ju20}.

\begin{lemma}[Function-field Perron formula]
Let $a:\A^+\to\mathbb C$, and suppose that
$\mathcal F(u) := \sum_{f\in\A^+}a(f)u^{d(f)}$
converges absolutely for $|u|<R$, where $R>0$. If
$n\in\mathbb Z_{\ge0}$ and
$0<r<\min\{R,1\}$, then
\begin{align}
\sum_{f\in\A_n^+}a(f)
&=
\frac{1}{2\pi i}
\oint_{|u|=r}
\frac{\mathcal F(u)}{u^{n+1}}\,du,
\label{eq:perron_exact}\\
\sum_{f\in\A_{\le n}^+}a(f)
&=
\frac{1}{2\pi i}
\oint_{|u|=r}
\frac{\mathcal F(u)}
 {(1-u)u^{n+1}}\,du.
\label{eq:perron_partial}
\end{align}
\end{lemma}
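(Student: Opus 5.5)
The plan is to prove both identities by expanding $\mathcal F(u)$ as a power series inside the contour and integrating term by term. Since $0<r<R$, the series $\mathcal F(u)=\sum_{f\in\A^+}a(f)u^{d(f)}$ converges absolutely for $|u|\le r$. Grouping by degree, I will write $\mathcal F(u)=\sum_{k\ge0}c_ku^k$ with $c_k:=\sum_{f\in\A_k^+}a(f)$, a finite sum because $\#\A_k^+=q^k$. Absolute convergence gives $\sum_k|c_k|r^k\le\sum_f|a(f)|r^{d(f)}<\infty$, so the power series converges uniformly on the circle $|u|=r$.

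For \eqref{eq:perron_exact}, uniform convergence lets me interchange sum and integral:
$$\frac{1}{2\pi i}\oint_{|u|=r}\frac{\mathcal F(u)}{u^{n+1}}\,du=\sum_{k\ge0}c_k\,\frac{1}{2\pi i}\oint_{|u|=r}u^{k-n-1}\,du.$$
The inner integral equals $1$ if $k=n$ and $0$ otherwise, so the right-hand side is $c_n=\sum_{f\in\A_n^+}a(f)$. This is just Cauchy's coefficient formula.

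For \eqref{eq:perron_partial}, I will use $r<1$ to expand $(1-u)^{-1}=\sum_{j\ge0}u^j$, which converges absolutely and uniformly on $|u|=r$. The Cauchy product gives
$$\frac{\mathcal F(u)}{1-u}=\sum_{m\ge0}\Bigl(\sum_{k=0}^mc_k\Bigr)u^m,$$
and this series also converges absolutely on $|u|\le r$, being the product of two absolutely convergent series. Applying the first identity to it, with coefficients $\sum_{k\le m}c_k$, extracts the coefficient of $u^n$, namely $\sum_{k=0}^nc_k=\sum_{f\in\A_{\le n}^+}a(f)$.

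No step is a genuine obstacle. The only point needing care is justifying the interchange of summation and integration and the Cauchy product. Both follow from absolute convergence on the closed disc $|u|\le r$, which the hypothesis $r<\min\{R,1\}$ guarantees.
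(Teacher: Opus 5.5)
Your proof is correct. Grouping by degree, using absolute convergence on $|u|=r$ to integrate term by term, and expanding $(1-u)^{-1}$ via a Cauchy product for the cumulative version is the standard Cauchy coefficient extraction. The paper gives no proof of its own, quoting the lemma from \cite[Lemma~2.8]{AM-21} and \cite[Fact~3.1]{Ju20}, and your argument is the standard one behind that statement.
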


\subsection{Poisson summation formula}\label{sec:poisson}

For a general odd prime power $q$, the quadratic Gauss sums over $\A$ carry a
root-of-unity phase, which is where $q\equiv1\bmod4$ enters \cite{Fl17-1}.
Building that phase into the definition of the Gauss sum makes its local
values, its multiplicativity and the Poisson summation formula phase-free
for every odd prime power $q$.

Let $p=\operatorname{char}\mathbb F_q$ and let
$e_p(\alpha):=\exp\bigl(2\pi i\operatorname{Tr}_{\mathbb F_q/\mathbb F_p}(\alpha)/p\bigr)$
be the canonical additive character of $\mathbb F_q$. With $\chi_2$ as in
\S\ref{sec:ff}, put
\begin{equation}\label{eq:g1-def}
g_1:=\sum_{\alpha\in\mathbb F_q^\times}\chi_2(\alpha)e_p(\alpha),
\qquad
\epsq:=\frac{g_1}{\sqrt q}.
\end{equation}
The classical identity $g_1^2=\chi_2(-1)q$
(see \cite[Chapter~10, Section~3]{IR90}) gives
\begin{equation}\label{eq:epsq}
\epsq^2=\chi_2(-1), \qquad \epsq^4=1;
\end{equation}
in particular $|\epsq|=1$, so that $\epsq\overline{\epsq}=1$.
Thus $\epsq\in\{\pm1\}$ if $q\equiv1\bmod4$, and $\epsq\in\{\pm i\}$ if
$q\equiv3\bmod4$; for prime $q$, Gauss's evaluation gives $\epsq=1$ when
$q\equiv1\bmod4$ and $\epsq=i$ when $q\equiv3\bmod4$.

For $a\in\mathbb F_q((x^{-1}))$, write $[x^{-1}]a$ for the coefficient of
$x^{-1}$ in the Laurent expansion of $a$, and put
$e(a):=e_p\bigl([x^{-1}]a\bigr)$. Since $a\mapsto[x^{-1}]a$ is
$\mathbb F_p$-linear and $e_p$ is an additive character, $e$ is a character
of the additive group $\mathbb F_q((x^{-1}))$. Every $a\in\A$ has
$[x^{-1}]a=0$, so $e$ is trivial on $\A$.

For $h\in\A^+$, let $\nu(h)\in\{0,1\}$ be the parity of $d(h)$, so that
$\nu(h)\equiv d(h)\bmod2$.

\begin{definition}[Phase-normalized Gauss sum]
For $h\in\A^+$ and $V\in\A$, define
\begin{equation}\label{eq:Gc-def}
\Gc(V,\psi_h)
:=\overline{\epsq}^{\,\nu(h)}
\sum_{u\bmod h}\psi_h(u)e\!\left(\frac{uV}{h}\right).
\end{equation}
\end{definition}

Since $\psi_h$ is a character modulo $h$ and $e$ is trivial on $\A$, the
summand in \eqref{eq:Gc-def} depends only on $u$ and $V$ modulo $h$; the sum
is therefore independent of the residue system chosen, and $\Gc(V,\psi_h)$
depends only on $V\bmod h$.

\begin{lemma}[Normalized prime Gauss sum]\label{lem:prime-gauss}
For every $P\in\mathcal{P}_{d}$, one has
\begin{equation}\label{eq:prime-gauss}
\Gc(1,\psi_P)=q^{d/2}=\sqrt{|P|}.
\end{equation}
\end{lemma}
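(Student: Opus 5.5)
The plan is to identify $\A/P\A$ with $\mathbb F_{q^d}$, recognize $\Gc(1,\psi_P)$ as a twisted quadratic Gauss sum of $\mathbb F_{q^d}$, and evaluate it with Hasse--Davenport; the whole difficulty is tracking the signs. Let $\theta\in\mathbb F_{q^d}$ be a root of $P$, so $\A/P\A\cong\mathbb F_{q^d}$ via $u\mapsto u(\theta)$. Under this isomorphism, $\psi_P(u)\equiv u^{(q^d-1)/2}\bmod P$ is the quadratic character $\eta$ of $\mathbb F_{q^d}^\times$.

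\emph{Step 1: the additive character.} We need the classical Euler identity: for $d(u)<d$,
$$[x^{-1}]\frac{u}{P}=\sum_{\sigma}\frac{u(\theta^\sigma)}{P'(\theta^\sigma)}=\operatorname{Tr}_{\mathbb F_{q^d}/\mathbb F_q}\!\Bigl(\frac{u(\theta)}{P'(\theta)}\Bigr).$$
This comes from the partial fractions $u/P=\sum_\sigma u(\theta^\sigma)/\bigl(P'(\theta^\sigma)(x-\theta^\sigma)\bigr)$, whose $x^{-1}$-coefficient is read off directly. Composing with $e_p$ shows
$$e(u/P)=\psi_d\bigl(\lambda u(\theta)\bigr),\qquad \lambda:=P'(\theta)^{-1},$$
where $\psi_d=e_p\circ\operatorname{Tr}_{\mathbb F_{q^d}/\mathbb F_p}$ is the canonical additive character of $\mathbb F_{q^d}$.

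\emph{Step 2: reduce to the canonical Gauss sum.} Substituting $a\mapsto\lambda^{-1}a$ gives
$$\sum_{u\bmod P}\psi_P(u)e(u/P)=\eta(P'(\theta))\,G_d,\qquad G_d:=\sum_{a\in\mathbb F_{q^d}^\times}\eta(a)\psi_d(a).$$
By Hasse--Davenport (see \cite{IR90}), $G_d=(-1)^{d-1}g_1^d=(-1)^{d-1}\epsq^d q^{d/2}$. Since $\eta$ restricted via the norm satisfies $\eta(\beta)=\chi_2(N_{\mathbb F_{q^d}/\mathbb F_q}\beta)$, we get
$$\eta(P'(\theta))=\chi_2\bigl(N(P'(\theta))\bigr)=\chi_2\bigl((-1)^{d(d-1)/2}\disc P\bigr),$$
using $N(P'(\theta))=\prod_\sigma P'(\theta^\sigma)=(-1)^{d(d-1)/2}\disc P$.

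\emph{Step 3: the discriminant sign, which I expect to be the main obstacle.} I would apply Stickelberger's argument: $\sqrt{\disc P}=\prod_{i<j}(\theta_i-\theta_j)$ lies in $\mathbb F_q$ exactly when Frobenius acts as an even permutation of the roots. Frobenius is a $d$-cycle, so $\chi_2(\disc P)=(-1)^{d-1}$. Combining the three steps,
$$\sum_{u\bmod P}\psi_P(u)e(u/P)=\chi_2(-1)^{d(d-1)/2}\epsq^d q^{d/2}=\epsq^{d^2}q^{d/2},$$
by \eqref{eq:epsq}. This is where care is needed: the conventions for $\disc$ and the orientation of the residue must be kept consistent, since any slip changes the answer by exactly the phase being normalized away.

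\emph{Step 4: conclusion.} Since $\epsq^4=1$, we have $\epsq^{d^2}=1$ for $d$ even (as $4\mid d^2$) and $\epsq^{d^2}=\epsq$ for $d$ odd (as $d^2\equiv1\bmod 8$). That is, $\epsq^{d^2}=\epsq^{\nu(P)}$. Multiplying by $\overline{\epsq}^{\,\nu(P)}$ and using $\epsq\overline{\epsq}=1$ gives $\Gc(1,\psi_P)=q^{d/2}$.
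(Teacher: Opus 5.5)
Your proposal is correct and follows the paper's proof essentially step for step. Both arguments identify $e(u/P)$ with $e_p\bigl(\operatorname{Tr}_{\mathbb F_{q^d}/\mathbb F_q}(u(\theta)/P'(\theta))\bigr)$ via partial fractions, rescale by $P'(\theta)$ to reach the canonical Gauss sum of $\mathbb F_{q^d}$, apply Hasse--Davenport and $N(P'(\theta))=(-1)^{d(d-1)/2}\disc P$ to get $\epsq^{d^2}q^{d/2}$, and conclude with $d^2\equiv\nu(P)\bmod 4$. The only difference is that you derive $\chi_2(\disc P)=(-1)^{d-1}$ directly from Frobenius acting as a $d$-cycle on the roots, whereas the paper cites the Pellet--Stickelberger theorem from Swan.
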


\begin{proof}
Let $\theta_1,\dots,\theta_d$ be the roots of $P$, put $\theta:=\theta_1$ and
$K:=\mathbb F_q(\theta)\cong\mathbb F_{q^d}$, so that $f\mapsto f(\theta)$ is a
ring isomorphism $\A/(P)\to K$. The roots of $P$ are distinct and lie in
$K$, so for $u\in\A$ with $d(u)<d$ the partial-fraction decomposition of
$u/P$ over $K$ reads
$$
\frac{u}{P}=\sum_{i=1}^{d}\frac{u(\theta_i)}{P'(\theta_i)}
\cdot\frac{1}{x-\theta_i},
$$
and $(x-\theta_i)^{-1}=\sum_{n\ge0}\theta_i^nx^{-n-1}$ gives
$[x^{-1}](u/P)=\sum_{i=1}^{d}u(\theta_i)/P'(\theta_i)$, the $d$ summands
being the images of $u(\theta)/P'(\theta)$ under the
$\mathbb F_q$-automorphisms of $K$, so that
$$
e\!\left(\frac{u}{P}\right)
=e_p\!\left(\operatorname{Tr}_{K/\mathbb F_q}
\!\left(\frac{u(\theta)}{P'(\theta)}\right)\right).
$$
Let $\chi_K$ be the quadratic character of $K^\times$ (with $\chi_K(0)=0$)
and $g_d:=\sum_{\bar u\in K}\chi_K(\bar u)
e_p\bigl(\operatorname{Tr}_{K/\mathbb F_q}(\bar u)\bigr)$, so that $g_1$ is
the sum in \eqref{eq:g1-def}. Transporting
$\bigl(\frac uP\bigr)\equiv u^{(|P|-1)/2}\bmod P$ along $f\mapsto f(\theta)$
gives $\psi_P(u)=\chi_K(u(\theta))$, and substituting
$\bar u\mapsto\bar uP'(\theta)$, which permutes $K$ since $P'(\theta)\ne0$,
gives $\sum_{u\bmod P}\psi_P(u)e(u/P)=\chi_K(P'(\theta))g_d$.

Since $\chi_K=\chi_2\circ N_{K/\mathbb F_q}$ and
$N_{K/\mathbb F_q}(P'(\theta))=\prod_{i\ne j}(\theta_i-\theta_j)
=(-1)^{d(d-1)/2}\disc P$, the Pellet--Stickelberger theorem
\cite[Corollary~1]{Sw62} ($\chi_2(\disc P)=(-1)^{d-1}$) gives
$\chi_K(P'(\theta))=\chi_2(-1)^{d(d-1)/2}(-1)^{d-1}$, while the
Hasse--Davenport relation \cite[Chapter~11, Section~4]{IR90}
gives $g_d=(-1)^{d-1}g_1^{\,d}$. The signs $(-1)^{d-1}$ cancel, and
$\chi_2(-1)=\epsq^2$ by \eqref{eq:epsq} and $g_1=\epsq\sqrt q$ by
\eqref{eq:g1-def} give
\begin{equation}\label{eq:prime-gauss-unnormalized}
\sum_{u\bmod P}\psi_P(u)e\!\left(\frac{u}{P}\right)
=\epsq^{\,d(d-1)}\epsq^{\,d}q^{d/2}=\epsq^{\,d^2}q^{d/2}.
\end{equation}
Since $d^2\equiv\nu(P)\bmod4$ and $\epsq^4=1$, multiplying
\eqref{eq:prime-gauss-unnormalized} by $\overline{\epsq}^{\,\nu(P)}$ as in
\eqref{eq:Gc-def} gives \eqref{eq:prime-gauss}.
\end{proof}

\begin{lemma}[Phase-free quadratic Gauss sums]\label{lem:Gc-values} 
\begin{enumerate}[label=\textup{(\arabic*)}]
\item
Let $P\in\mathcal{P}_{d}$, and $i\ge1$.
For $V\in\A\setminus\{0\}$, write $V=P^k V_1$ with $k\ge0$ and $P\nmid V_1$. Then one has
\begin{equation}\label{eq:Gc-values}
\Gc(V,\psi_{P^i})
=\begin{cases}
0& i\le k,\ i\text{ odd},\\
\phi(P^i)& i\le k,\ i\text{ even},\\
-|P|^{i-1}& i=k+1,\ i\text{ even},\\
\bigl(\frac{V_1}{P}\bigr)|P|^{i-1}\sqrt{|P|}& i=k+1,\ i\text{ odd},\\
0& i\ge k+2,
\end{cases}
\end{equation}
where $\phi(f)$ denotes the polynomial Euler totient function over $\A$.

\item
For coprime $h_1,h_2\in\A^+$ and $V\in\A$, one has
\begin{equation}\label{eq:Gc-mult}
\Gc(V,\psi_{h_1h_2})=\Gc(V,\psi_{h_1})\Gc(V,\psi_{h_2}).
\end{equation}

\item
For $h\in\A^+$, one has
\begin{equation}\label{eq:Gc-zero}
\Gc(0,\psi_h)=\phi(h)\mathbf 1_{h=\square}.
\end{equation}
\end{enumerate}
\end{lemma}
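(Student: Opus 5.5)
I would establish the three parts in the order (2), (1), (3), since the local values in (1) and the zero-frequency formula in (3) are most easily read off once multiplicativity is available. For (2), write $h=h_1h_2$ with $(h_1,h_2)=1$, choose $a_1,a_2\in\A$ with $a_1h_2+a_2h_1=1$, and parametrize residues modulo $h$ by $u=u_1h_2+u_2h_1$ with $u_j$ modulo $h_j$ (Chinese remainder theorem). Then $uV/h=u_1V/h_1+u_2V/h_2$, so $e(uV/h)$ factors because $e$ is an additive character. Multiplicativity of the Jacobi symbol gives $\psi_h(u)=\psi_{h_1}(u)\psi_{h_2}(u)=\psi_{h_1}(u_1h_2)\psi_{h_2}(u_2h_1)$. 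Substituting $u_1\mapsto u_1\bar h_2$ (with $\bar h_2$ an inverse modulo $h_1$) removes $h_2$ from the additive phase but leaves the factor $\psi_{h_1}(h_2)\psi_{h_2}(h_1)$ multiplying the unnormalized product. By quadratic reciprocity this equals $(-1)^{\frac{q-1}{2}d(h_1)d(h_2)}=\chi_2(-1)^{d(h_1)d(h_2)}=\epsq^{2\nu(h_1)\nu(h_2)}$. It then remains to check the phase identity $\overline{\epsq}^{\,\nu(h_1h_2)}\epsq^{2\nu(h_1)\nu(h_2)}=\overline{\epsq}^{\,\nu(h_1)}\overline{\epsq}^{\,\nu(h_2)}$. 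This holds because $\nu(h_1)+\nu(h_2)=\nu(h_1h_2)+2\nu(h_1)\nu(h_2)$ and $\epsq^4=1$. I expect this sign bookkeeping to be the main obstacle: the normalization was designed precisely so that the reciprocity sign is absorbed.

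For (1), first treat $P\nmid V$ with $i=1$. Substituting $u\mapsto uV^{-1}$ gives $\Gc(V,\psi_P)=\bigl(\frac{V}{P}\bigr)\Gc(1,\psi_P)=\bigl(\frac VP\bigr)\sqrt{|P|}$ by Lemma~\ref{lem:prime-gauss}. For general $i$ and $V=P^kV_1$, note that $\nu(P^i)\equiv i\,d$, and $\psi_{P^i}(u)$ equals $\psi_P(u)^i$ times the indicator that $P\nmid u$. Write $u=a+bP$ with $a$ modulo $P$ and $b$ modulo $P^{i-1}$, so that $\psi_{P^i}(u)$ depends only on $a$. The sum over $b$ of $e(bV/P^{i-1})$ is $|P|^{i-1}$ if $P^{i-1}\mid V$ and $0$ otherwise; this yields the vanishing for $i\ge k+2$. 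For $i\le k$ the additive phase is trivial, so the sum is $\sum_{u}\psi_P(u)^i$. This is $\phi(P^i)$ when $i$ is even and $0$ when $i$ is odd, and the phase factor equals $1$ in the even case. For $i=k+1$ we are left with $|P|^{i-1}\sum_{a\bmod P}\psi_P(a)^ie(aV_1/P)$. When $i$ is even this is a Ramanujan-type sum equal to $-1$. When $i$ is odd it is the unnormalized prime sum at $V_1$. In that case the leftover phase $\overline{\epsq}^{\,\nu(P^i)}$ versus $\overline{\epsq}^{\,\nu(P)}$ agrees because $id\equiv d\bmod 2$. This reproduces the table in \eqref{eq:Gc-values}.

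For (3), take $V=0$. By the direct computation already done (the case $i\le k$ with $k=\infty$), $\Gc(0,\psi_{P^i})$ is $\phi(P^i)$ for even $i$ and $0$ for odd $i$. Then (2) gives $\Gc(0,\psi_h)=\phi(h)$ if every exponent in $h$ is even, and $0$ otherwise. Alternatively, this follows at once from \eqref{eq:psi-induced}: for $h$ a square the character $\psi_h$ is principal, the phase is $1$ since $d(h)$ is even, and the sum counts units; for $h$ not a square, $\psi_h$ is nonprincipal and its full sum vanishes.
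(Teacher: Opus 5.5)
Your proposal is correct and follows essentially the paper's proof: CRT plus quadratic reciprocity for (2), with the sign $\chi_2(-1)^{d(h_1)d(h_2)}=\epsq^{2\nu(h_1)\nu(h_2)}$ absorbed via $\nu(h_1)+\nu(h_2)=\nu(h_1h_2)+2\nu(h_1)\nu(h_2)$ and $\epsq^4=1$; the splitting $u=a+bP$, orthogonality and Lemma~\ref{lem:prime-gauss} for (1), where the paper uses this splitting only for odd $i$ and treats even $i$ by removing the terms with $P\mid u$; and principality of $\psi_h$ for (3). One cosmetic point: with $u=u_1h_2+u_2h_1$ the phase is already $e(u_1V/h_1)\,e(u_2V/h_2)$, so the substitution $u_1\mapsto u_1\bar h_2$ is unnecessary (done literally, it would move $\bar h_2$ into the phase rather than remove anything), and the factor $\psi_{h_1}(h_2)\psi_{h_2}(h_1)$ comes out directly from complete multiplicativity of the symbol in its upper argument.
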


\begin{proof}
We prove \eqref{eq:Gc-values} first, distinguishing the two parities of $i$.
Suppose first that $i$ is even. Then $\nu(P^i)=0$ and $\psi_{P^i}(u)=\bigl(\frac{u}{P^i}\bigr)=\mathbf 1_{P\nmid u}$.
Thus $\Gc(V,\psi_{P^i})=\sum_{u\bmod P^i,\,P\nmid u}e\bigl(uV/P^i\bigr)$.
Removing from the full
sum over $u\bmod P^i$ the terms with $P\mid u$, which are parameterized by
$u=Pv$ with $v\bmod P^{i-1}$, and using additive orthogonality in the form
$$
\sum_{u\bmod P^j}e\!\left(\frac{uV}{P^j}\right)=|P|^j\mathbf 1_{P^j\mid V}
\qquad(j\ge0),
$$
we obtain $\Gc(V,\psi_{P^i})=|P|^i\mathbf 1_{P^i\mid V}-|P|^{i-1}\mathbf 1_{P^{i-1}\mid V}$.
Since $V=P^kV_1$ with $P\nmid V_1$, the indicator $\mathbf 1_{P^j\mid V}$
equals $1$ for $j\le k$ and $0$ for $j>k$. Applying this to $j=i$ and to
$j=i-1$, and using $\phi(P^i)=|P|^i-|P|^{i-1}$, we obtain
$$
\Gc(V,\psi_{P^i})
=\begin{cases}
\phi(P^i)& i\le k,\\
-|P|^{i-1}& i=k+1,\\
0& i\ge k+2,
\end{cases}
$$
which are the entries of \eqref{eq:Gc-values} with $i$ even.

Suppose next that $i$ is odd. Then $\nu(P^i)=\nu(P)$ and
$\psi_{P^i}(u)=\bigl(\frac uP\bigr)$. Writing $u=a+Pb$ with $a\bmod P$ and
$b\bmod P^{i-1}$, we have $\psi_{P^i}(u)=\bigl(\frac aP\bigr)$ and, since
$V=P^kV_1$, $uV/P^i=aV_1/P^{\,i-k}+bV_1/P^{\,i-k-1}$; hence
\eqref{eq:Gc-def} factors as
$$
\Gc(V,\psi_{P^i})
=\overline{\epsq}^{\,\nu(P)}
\left(\sum_{a\bmod P}\Bigl(\frac aP\Bigr)e\!\left(\frac{aV_1}{P^{\,i-k}}\right)\right)
\left(\sum_{b\bmod P^{i-1}}e\!\left(\frac{bV_1}{P^{\,i-k-1}}\right)\right).
$$
If $i\ge k+2$, the summand of the second sum depends only on
$b\bmod P^{\,i-k-1}$, $e$ being trivial on $\A$, so that sum equals
$|P|^{k}\sum_{c\bmod P^{\,i-k-1}}e\bigl(cV_1/P^{\,i-k-1}\bigr)=0$ by the
orthogonality relation above, since $i-k-1\ge1$ and $P\nmid V_1$.
If $i\le k+1$, then $bV_1/P^{\,i-k-1}\in\A$, so the second sum equals
$|P|^{i-1}$. For $i\le k$ one has $aV_1/P^{\,i-k}\in\A$ as well, and the
first sum is $\sum_{a\bmod P}\bigl(\frac aP\bigr)=0$; for $i=k+1$, replacing
$a$ by $aV_1^{-1}$, where $V_1^{-1}$ is the inverse of $V_1$ modulo $P$, and
using $\bigl(\frac{V_1^{-1}}{P}\bigr)=\bigl(\frac{V_1}{P}\bigr)$, turns the
first sum into
$\bigl(\frac{V_1}{P}\bigr)\epsq^{\,\nu(P)}\Gc(1,\psi_P)$, so that
$\Gc(V,\psi_{P^i})=\bigl(\frac{V_1}{P}\bigr)|P|^{i-1}\sqrt{|P|}$ by
Lemma~\ref{lem:prime-gauss}. This proves \eqref{eq:Gc-values}.

Now let $(h_1,h_2)=1$ and put $d_j=d(h_j)$. Writing
$u=u_1h_2+u_2h_1\bmod{h_1h_2}$, the Chinese remainder theorem and the
quadratic reciprocity law give, as in \cite[Lemma~3.2]{Fl17-1},
\begin{align*}
\sum_{u\bmod h_1h_2}\psi_{h_1h_2}(u)e\!\left(\frac{uV}{h_1h_2}\right)
=\chi_2(-1)^{d_1d_2}
\prod_{j=1}^2\left(\sum_{u_j\bmod h_j}\psi_{h_j}(u_j)e\!\left(\frac{u_jV}{h_j}\right)\right),
\end{align*}
the sign $\chi_2(-1)^{d_1d_2}$ being $-1$ when $q\equiv3\bmod4$ and
$d_1,d_2$ are both odd, so that the unnormalized sums are not multiplicative
in general. For $j=1,2$ one has
$\sum_{u_j\bmod h_j}\psi_{h_j}(u_j)e\bigl(u_jV/h_j\bigr)
=\epsq^{\,\nu(h_j)}\Gc(V,\psi_{h_j})$ by \eqref{eq:Gc-def} and
$|\epsq|=1$, and $\chi_2(-1)=\epsq^2$ by \eqref{eq:epsq}; hence
\begin{align*}
\Gc(V,\psi_{h_1h_2})
&=\epsq^{\,\nu(h_1)+\nu(h_2)-\nu(h_1h_2)+2d_1d_2}
\Gc(V,\psi_{h_1})\Gc(V,\psi_{h_2}).
\end{align*}
Since $\nu(h_1h_2)=\nu(h_1)+\nu(h_2)-2\nu(h_1)\nu(h_2)$, the exponent equals
$2\bigl(\nu(h_1)\nu(h_2)+d_1d_2\bigr)$; and $\nu(h_j)\equiv d_j\bmod2$ gives
$\nu(h_1)\nu(h_2)\equiv d_1d_2\bmod2$, so the exponent is divisible by $4$.
As $\epsq^4=1$ by \eqref{eq:epsq}, this proves \eqref{eq:Gc-mult}.

Finally, by definition,
$\Gc(0,\psi_h)=\overline{\epsq}^{\,\nu(h)}\sum_{u\bmod h}\psi_h(u)$.
The character $\psi_h$ is principal precisely when $h$ is a square.
In that case $\nu(h)=0$, and the character sum equals $\phi(h)$; otherwise it vanishes.
This proves \eqref{eq:Gc-zero}.
\end{proof}

With the Gauss sum $\Gc$ normalized as above, the Poisson summation formula
over $\A$ takes the following form for every odd prime power $q$.

\begin{proposition}[Poisson summation over $\A$]
Let $h\in\A^+$ and $m\ge0$.
If $d(h)$ is even, then
\begin{equation}\label{eq:poisson_even}
\sum_{H\in\Apn{m}}\psi_h(H)
=\frac{q^m}{|h|}\Biggl(\Gc(0,\psi_h)+(q-1)\sum_{V\in\Aple{d(h)-m-2}}\Gc(V,\psi_h)-\sum_{V\in\Apn{d(h)-m-1}}\Gc(V,\psi_h)\Biggr).
\end{equation}
If $d(h)$ is odd, then
\begin{equation}\label{eq:poisson_odd}
\sum_{H\in\Apn{m}}\psi_h(H)
=\frac{q^m\sqrt q}{|h|}\sum_{V\in\Apn{d(h)-m-1}}\Gc(V,\psi_h).
\end{equation}
\end{proposition}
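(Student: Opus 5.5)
The plan is to follow the standard route of \cite[Lemma~3.3]{Fl17-1}: expand $\psi_h$ by finite Fourier inversion modulo $h$, evaluate the resulting exponential sums over $\Apn{m}$ exactly, and then regroup the dual variable $V$ by its leading coefficient. The phase $\epsq$ is absorbed at the very end using \eqref{eq:epsq}.

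\emph{Fourier inversion.} Put $G(V):=\sum_{u\bmod h}\psi_h(u)e(uV/h)=\epsq^{\,\nu(h)}\Gc(V,\psi_h)$. Since $\psi_h$ is real-valued, additive orthogonality $\sum_{V\bmod h}e(wV/h)=|h|\mathbf 1_{h\mid w}$ gives
$$\psi_h(H)=\frac1{|h|}\sum_{V\bmod h}G(V)e(-HV/h)$$
for every $H\in\A$. This holds for any, possibly imprimitive, $\psi_h$. Taking $V$ over the representatives with $d(V)<d(h)$ (and $V=0$) and summing over $H$, we get
$$\sum_{H\in\Apn m}\psi_h(H)=\frac1{|h|}\sum_{V}G(V)S(V),\qquad S(V):=\sum_{H\in\Apn m}e(-HV/h).$$

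\emph{Evaluation of $S(V)$.} I expect this to be the only real computation. Write $V/h=\sum_{k\ge1}c_kx^{-k}$; there is no polynomial part because $d(V)<d(h)$. With $H=x^m+\sum_{j<m}a_jx^j$, we have $[x^{-1}](HV/h)=c_{m+1}+\sum_{j<m}a_jc_{j+1}$. Summing each free coefficient $a_j\in\mathbb F_q$ against $e_p$ kills the sum unless $c_1=\dots=c_m=0$, that is, unless $V=0$ or $d(V)\le d(h)-m-1$. In the surviving cases $S(V)=q^me_p(-c_{m+1})$, and $c_{m+1}$ is the leading coefficient $\operatorname{lc}(V)$ when $d(V)=d(h)-m-1$ and is $0$ otherwise. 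Hence:
\begin{itemize}
\item $S(V)=q^m$ if $V=0$ or $d(V)\le d(h)-m-2$;
\item $S(V)=q^me_p(-\operatorname{lc}V)$ if $d(V)=d(h)-m-1$;
\item $S(V)=0$ otherwise.
\end{itemize}
When $m\ge d(h)$ only $V=0$ survives, which matches the empty-range conventions.

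\emph{Regrouping.} Write each nonzero $V=cV'$ with $c\in\mathbb F_q^\times$ and $V'$ monic. The substitution $u\mapsto c^{-1}u$ together with \eqref{eq:constant-symbol} gives $G(cV')=\chi_2(c)^{d(h)}G(V')$.

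For $d(V')\le d(h)-m-2$ the $c$-sum is $\sum_c\chi_2(c)^{d(h)}$, which equals $q-1$ if $d(h)$ is even and $0$ if $d(h)$ is odd. For $d(V')=d(h)-m-1$ the $c$-sum is $\sum_c\chi_2(c)^{d(h)}e_p(-c)$, which equals $-1$ in the even case and $\chi_2(-1)g_1$ in the odd case.

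In the even case $\nu(h)=0$, so $G=\Gc$, and \eqref{eq:poisson_even} follows directly, including the $V=0$ term $\Gc(0,\psi_h)$. In the odd case the $V=0$ term vanishes by \eqref{eq:Gc-zero}, since an odd-degree $h$ is not a square. The remaining factor is $\chi_2(-1)g_1\epsq=\epsq^2\cdot\epsq\sqrt q\cdot\epsq=\epsq^4\sqrt q=\sqrt q$, by \eqref{eq:g1-def} and \eqref{eq:epsq}, which gives \eqref{eq:poisson_odd}.

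The main care is needed in the sign bookkeeping for $e_p(-c)$ and the phase, and in checking that taking the representatives $V$ of degree $<d(h)$ is compatible with $\Gc$ depending only on $V\bmod h$.
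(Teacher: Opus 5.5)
Your proposal is correct and follows the same route as the paper's proof. Both arguments Fourier-expand $\psi_h$ modulo $h$, evaluate the exponential sum over $\Apn{m}$ coefficient by coefficient, regroup the nonzero dual variables as $cV$ with $V$ monic, and collect the three phases into $\epsq^4=1$ in the odd case. The differences are cosmetic: you keep the sign in $e_p(-c)$, where the paper substitutes $u\mapsto-u$, $A\mapsto-A$ to pull $\psi_h(-1)$ out up front (the same factor $\chi_2(-1)=\epsq^2$ appears either way), and your evaluation of $S(V)$ covers $m\ge d(h)$ automatically, whereas the paper treats that case separately via \eqref{eq:full-period}.
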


\begin{proof}
First suppose that $m\ge d(h)$. Then $d(h)-m-2<0$ and $d(h)-m-1<0$, so the
$V$-sums in \eqref{eq:poisson_even} and \eqref{eq:poisson_odd} are empty,
while \eqref{eq:full-period} gives
$\sum_{H\in\Apn{m}}\psi_h(H)=q^{m-d(h)}\sum_{A\bmod h}\psi_h(A)$. If
$d(h)$ is even, then $\nu(h)=0$, so \eqref{eq:Gc-def} gives
$\sum_{A\bmod h}\psi_h(A)=\Gc(0,\psi_h)$, and $|h|=q^{d(h)}$ turns the
right-hand side into $q^m\Gc(0,\psi_h)/|h|$: this is
\eqref{eq:poisson_even}. If $d(h)$ is odd, then $h$ is not a square, so
$\Gc(0,\psi_h)=0$ by Lemma~\ref{lem:Gc-values}(3) and both sides of
\eqref{eq:poisson_odd} vanish.

It remains to consider $0\le m\le d(h)-1$. For $u\bmod h$, set
$S_m(u):= \sum_{H\in\Apn{m}}e\!\left(\frac{uH}{h}\right)$. For $B\in\A$, the
map $u\mapsto e(uB/h)$ is a character of $\A/h\A$, trivial precisely when
$h\mid B$, so that
$|h|^{-1}\sum_{u\bmod h}e\bigl(u(A-H)/h\bigr)=\mathbf 1_{A\equiv H\bmod h}$
for $A,H\in\A$. Writing
$\psi_h(H)=\sum_{A\bmod h}\psi_h(A)\mathbf 1_{A\equiv H\bmod h}$ and then
making the changes of variables $u\mapsto-u$ and $A\mapsto-A$, we obtain
\begin{align*}
\sum_{H\in\Apn{m}}\psi_h(H)
&=
\frac{\psi_h(-1)}{|h|}
\sum_{u\bmod h}S_m(u)
\sum_{A\bmod h}\psi_h(A)e\!\left(\frac{uA}{h}\right)
=
\frac{\psi_h(-1)\epsq^{\nu(h)}}{|h|}
\sum_{u\bmod h}S_m(u)\Gc(u,\psi_h),
\end{align*}
the factor $\epsq^{\nu(h)}$ arising because the inner sum is the
\emph{unnormalized} Gauss sum of \eqref{eq:Gc-def} and $\epsq\overline{\epsq}=1$.
By \eqref{eq:constant-symbol} the scalar factor outside the last sum is
$\psi_h(-1)\epsq^{\nu(h)}/|h|=\chi_2(-1)^{d(h)}\epsq^{\nu(h)}/|h|$.

Clearly $S_m(0)=q^m$. Let $u\bmod h$ be nonzero, represented by a polynomial
with $d(u)<d(h)$, and write $u/h=\sum_{n\ge1}c_nx^{-n}$. Writing
$H=x^m+\sum_{j<m}a_jx^j$, so that $H$ runs over $\Apn{m}$ as
$a_0,\dots,a_{m-1}$ run independently over $\mathbb F_q$, and using
$[x^{-1}]\bigl(ua_jx^j/h\bigr)=a_jc_{j+1}$, the additivity of $e$ factors
$S_m(u)$ into
$e(ux^m/h)\prod_{j<m}\sum_{a\in\mathbb F_q}e_p(ac_{j+1})$; each inner sum is
$q$ or $0$ according as $c_{j+1}$ vanishes or not, $e_p$ being nontrivial.
Hence, as in \cite[Proposition~3.1]{Fl17-1}, $S_m(u)=q^me(ux^m/h)$ if
$c_n=0$ for every $1\le n\le m$, and $S_m(u)=0$ otherwise. The first
nonvanishing coefficient of $u/h$ is $c_{d(h)-d(u)}$, so the former holds
precisely when $d(u)\le d(h)-m-1$.

Write each nonzero $u\bmod h$ with $d(u)\le d(h)-m-1$ uniquely as $u=cV$
with $c\in\mathbb F_q^\times$ and $V\in\A^+$, so that
$d(V)=d(u)\le d(h)-m-1$. The substitution $u\mapsto c^{-1}u$ in \eqref{eq:Gc-def} together with
\eqref{eq:constant-symbol} gives
$\Gc(cV,\psi_h)=\chi_2(c)^{d(h)}\Gc(V,\psi_h)$; and, $h$ and $V$ being monic,
$cVx^m/h=cx^{d(V)+m-d(h)}+\cdots$, so that $e(cVx^m/h)$ equals $1$ when
$d(V)\le d(h)-m-2$ and $e_q(c)$ when $d(V)=d(h)-m-1$. Grouping the
nonzero residues accordingly, we obtain
\begin{align}\label{eq:poisson-intermediate}
\sum_{H\in\Apn{m}}\psi_h(H)
=
\frac{q^m\chi_2(-1)^{d(h)}\epsq^{\nu(h)}}{|h|}
\Biggl[
\Gc(0,\psi_h)
+\Sigma_0
\sum_{V\in\Aple{d(h)-m-2}}\Gc(V,\psi_h)
+\Sigma_1
\sum_{V\in\Apn{d(h)-m-1}}\Gc(V,\psi_h)
\Biggr],
\end{align}
where, by \eqref{eq:constant-symbol},
$$
\Sigma_0:=\sum_{c\in\mathbb F_q^\times}\psi_h(c)
=\sum_{c\in\mathbb F_q^\times}\chi_2(c)^{d(h)},
\qquad
\Sigma_1:=\sum_{c\in\mathbb F_q^\times}\psi_h(c)e_q(c)
=\sum_{c\in\mathbb F_q^\times}\chi_2(c)^{d(h)}e_q(c).
$$
It remains to evaluate the scalar prefactor and the two constants $\Sigma_0$
and $\Sigma_1$ in each parity.

Suppose first that $d(h)$ is even. Then $\nu(h)=0$ and
$\chi_2(-1)^{d(h)}=1$, so the prefactor is $q^m/|h|$. Moreover
$\chi_2(c)^{d(h)}=1$ for every $c\in\mathbb F_q^\times$, whence
$\Sigma_0=q-1$ and $\Sigma_1=\sum_{c\in\mathbb F_q^\times}e_q(c)=-1$,
the second because $e_q$ is a nontrivial additive character of
$\mathbb F_q$. Therefore \eqref{eq:poisson-intermediate} reduces to
\eqref{eq:poisson_even}.

Suppose now that $d(h)$ is odd. Then $h$ is not a square, so
$\Gc(0,\psi_h)=0$ by Lemma~\ref{lem:Gc-values}(3) and the first term of
\eqref{eq:poisson-intermediate} is absent. Here $\chi_2(c)^{d(h)}=\chi_2(c)$,
so that $\Sigma_0=\sum_{c\in\mathbb F_q^\times}\chi_2(c)=0$ and
$\Sigma_1=\sum_{c\in\mathbb F_q^\times}\chi_2(c)e_q(c)=g_1=\epsq\sqrt q$,
so that only the term carrying $\Sigma_1$ survives in
\eqref{eq:poisson-intermediate}. Here three powers of $\epsq$ occur: $\psi_h(-1)=\chi_2(-1)=\epsq^2$ by
\eqref{eq:epsq}, the factor $\epsq^{\nu(h)}=\epsq$ left by the
normalization, and $\Sigma_1=\epsq\sqrt q$. The prefactor in
\eqref{eq:poisson-intermediate} is thus $q^m\epsq^3/|h|$, and the
coefficient of $\sum_{V\in\Apn{d(h)-m-1}}\Gc(V,\psi_h)$ is
$q^m\epsq^{3}\Sigma_1/|h|=q^m\epsq^{4}\sqrt q/|h|=q^m\sqrt q/|h|$, the three
exponents summing to $4$. Hence
\eqref{eq:poisson-intermediate} reduces to \eqref{eq:poisson_odd}.
\end{proof}

\begin{remark}
Up to the factor $\overline{\epsq}^{\,\nu(h)}$, $\Gc(V,\psi_h)$ is the
generalized Gauss sum $G(u,\chi)$ of \cite[Section~3]{Fl17-1}, in which the
letters $u$ and $V$ play the opposite roles; when $q\equiv1\bmod4$ is prime
one has $\epsq=1$, the two agree, and \eqref{eq:Gc-values} reduces to the
table of local values found there. For a general odd prime power $q$, the
unnormalized sum $\sum_{u\bmod P}\psi_P(u)e(u/P)$ carries the phase
$\epsq^{\,d^2}$ by \eqref{eq:prime-gauss-unnormalized}, and the factor
$\overline{\epsq}^{\,\nu(h)}$ built into \eqref{eq:Gc-def} cancels that
phase; every value in Lemma~\ref{lem:Gc-values} is free of it, and the three
powers of $\epsq$ in the proof of \eqref{eq:poisson_odd} combine to
$\epsq^4=1$. Under the hypothesis $q\equiv1\bmod4$ of \cite{Fl17-1} each
factor is separately trivial and no such cancellation is visible. In
\cite[Section~9]{Fl17-1} the primality assumption is removed while
$q\equiv1\bmod4$ is retained; there the phase $\epsq\in\{\pm1\}$ survives in
the odd-degree entry of the local table, and the case $\epsq=-1$ is not
carried out.

The exponent must be the parity $\nu(h)$ and not the degree $d(h)$: with
$\overline{\epsq}^{\,d(h)}$ in place of $\overline{\epsq}^{\,\nu(h)}$ in
\eqref{eq:Gc-def} it would be $2d_1d_2$ in the proof of \eqref{eq:Gc-mult},
so that multiplicativity would fail whenever $q\equiv3\bmod4$ and $d_1,d_2$
are odd, and $3d(h)+1$ in the proof of \eqref{eq:poisson_odd}, which is not
divisible by $4$ when $d(h)\equiv3\bmod4$. The same weight is forced in
\eqref{eq:prime-gauss} by $d^2\equiv\nu(P)\bmod4$.
\end{remark}

\subsection{Setup of the moment problem}

The argument follows the first-moment framework of
\cite[Section~3]{AM-21} and \cite[Section~3]{Ju20}, the new feature being
the divisor weight carried by the coefficients $A_D^*(f)$ of
\eqref{eq:Astar}.

For $N\in\{g,g-1\}$, define
\begin{equation}\label{eq:SN_def}
\mathcal S_N^*
:=
\sum_{D\in\mathcal H_{2g+2}}
\sum_{f\in\A_{\le N}^+}
\frac{A_D^*(f)}{\sqrt{|f|}}.
\end{equation}
Substituting \eqref{eq:Astar} into \eqref{eq:SN_def} and interchanging the
finite sums, we obtain
\begin{equation}\label{eq:SN_expanded}
\mathcal S_N^*
=
\sum_{f\in\A_{\le N}^+}
\frac{1}{|f|^{3/2}}
\sum_{h\mid f}|h|
\sum_{D\in\mathcal H_{2g+2}}\chi_D(h).
\end{equation}
Applying the identity in Lemma~\ref{lem:sigma} to the innermost character
average in \eqref{eq:SN_expanded}, we obtain
\begin{align}\label{eq:SN-before-poisson-raw}
\mathcal S_N^*
=
\sum_{f\in\A_{\le N}^+}
\frac{1}{|f|^{3/2}}
\sum_{h\mid f}|h|
\Biggl(\, \sum_{\substack{C\in\A_{\le g+1}^+\\C\mid h^\infty}}
\sum_{H\in\A_{2g-2d(C)+2}^+}\psi_h(H) -q   \sum_{\substack{C\in\A_{\le g}^+\\C\mid h^\infty}}
\sum_{H\in\A_{2g-2d(C)}^+}\psi_h(H) \Biggr).
\end{align}
As in \cite[Section~4]{Fl17-1} (see also \cite[(3.2)]{Ju20} and
\cite[Section~3]{AM-21}), we first discard the terms with $C\in\A_{g+1}^+$
in the first sum of \eqref{eq:SN-before-poisson-raw}. There $2g+2-2d(C)=0$,
so the inner sum over $H$ equals $\psi_h(1)=1$, while
$\#\bigl\{C\in\A_{g+1}^+:C\mid h^\infty\bigr\}\ll_\varepsilon q^{\varepsilon g}$
uniformly for $d(h)\le g$. Writing $f=hr$, these terms therefore contribute
$$
\ll_\varepsilon
q^{\varepsilon g}
\sum_{\substack{h,r\in\A^+\\d(h)+d(r)\le N}}|h|^{-1/2}|r|^{-3/2}
\ll_\varepsilon
q^{\varepsilon g}\sum_{a+b\le N}q^{(a-b)/2}
\ll_\varepsilon
q^{N/2+\varepsilon g}.
$$
Consequently, 
\begin{align}\label{eq:SN-before-poisson}
\mathcal S_N^*
&=
\sum_{f\in\A_{\le N}^+}
\frac{1}{|f|^{3/2}}
\sum_{h\mid f}|h|
\sum_{\substack{C\in\A_{\le g}^+\\C\mid h^\infty}}
\Biggl(
\,\sum_{H\in\A_{2g-2d(C)+2}^+}\psi_h(H)
-q\sum_{H\in\A_{2g-2d(C)}^+}\psi_h(H)
\Biggr)
+O_\varepsilon\!\left(q^{N/2+\varepsilon g}\right).
\end{align}

Let $\mathcal S_{N,\mathrm e}^*$ and $\mathcal S_{N,\mathrm o}^*$ denote the
contributions to the sum in \eqref{eq:SN-before-poisson} from divisors
$h\mid f$ of even and odd degree, respectively. Then
\begin{equation}\label{eq:parity_split}
\mathcal S_N^*
=
\mathcal S_{N,\mathrm e}^*
+
\mathcal S_{N,\mathrm o}^*
+
O_\varepsilon\!\left(q^{N/2+\varepsilon g}\right).
\end{equation}

For $h\in\A^+$ and $C\mid h^\infty$ put
\begin{equation}\label{eq:lambda-def}
\lambda=\lambda(h,C):=d(h)-2g+2d(C),
\end{equation}
and define
\begin{align}
s_{\mathrm e}(h;C)
&:=(q-1)\sum_{V\in\A^+_{\le\lambda-4}}\frac{\Gc(V,\psi_h)}{\sqrt{|h|}}
-\sum_{V\in\A^+_{\lambda-3}}\frac{\Gc(V,\psi_h)}{\sqrt{|h|}}
\nonumber\\
&\qquad
-\frac{q-1}{q}\sum_{V\in\A^+_{\le\lambda-2}}\frac{\Gc(V,\psi_h)}{\sqrt{|h|}}
+\frac{1}{q}\sum_{V\in\A^+_{\lambda-1}}\frac{\Gc(V,\psi_h)}{\sqrt{|h|}},
\label{eq:s-even}\\
s_{\mathrm o}(h;C)
&:=\sum_{V\in\A^+_{\lambda-3}}\frac{\Gc(V,\psi_h)}{\sqrt{|h|}}
-\frac{1}{q}\sum_{V\in\A^+_{\lambda-1}}\frac{\Gc(V,\psi_h)}{\sqrt{|h|}}.
\label{eq:s-odd}
\end{align}

We first consider the even-degree contribution. Apply
\eqref{eq:poisson_even} to the two sums over $H$ in
$\mathcal S_{N,\mathrm e}^*$: the factor $|h|$ in
\eqref{eq:SN-before-poisson} cancels the denominator $|h|$ in
\eqref{eq:poisson_even}, and with $q^{2g+2-2d(C)}=q^{2g+2}/|C|^2$,
$q^{2g+1-2d(C)}=q^{2g+1}/|C|^2$ and $1-q^{-1}=\zA(2)^{-1}$ the terms
carrying a nonzero dual variable $V$ assemble into $s_{\mathrm e}(h;C)$. Writing
$f=hm$ with $m\in\A^+$, so that $d(f)\le N$ becomes $d(h)+d(m)\le N$, we
obtain
\begin{align}\label{eq:SN-even-poisson}
\mathcal S_{N,\mathrm e}^*
&=
\frac{q^{2g+2}}{\zA(2)}
\sum_{\substack{h,m\in\A^+\\ d(h)+d(m)\le N\\ d(h)\text{ even}}}
\frac{\Gc(0,\psi_h)}{|h|^{3/2}|m|^{3/2}}
\sum_{\substack{C\in\A_{\le g}^+\\C\mid h^\infty}}\frac{1}{|C|^2}
+q^{2g+2}
\sum_{\substack{h,m\in\A^+\\ d(h)+d(m)\le N\\ d(h)\text{ even}}}
\frac{1}{|h|\,|m|^{3/2}}
\sum_{\substack{C\in\A_{\le g}^+\\C\mid h^\infty}}
\frac{s_{\mathrm e}(h;C)}{|C|^2}.
\end{align}
Split
$s_{\mathrm e}(h;C)=s_{\mathrm e}(h;C)_{\square}+s_{\mathrm e}(h;C)_{\ne\square}$
according as the dual variable $V$ is a nonzero square or a nonsquare.
Since
$d(h)$ is even, the two exact-degree sums in \eqref{eq:s-even} have odd
degree and so contain no squares, so that $s_{\mathrm e}(h;C)_{\square}$
comes from the two cumulative sums alone. Accordingly
\begin{equation}\label{eq:SN-even-decomposition}
\mathcal S_{N,\mathrm e}^*
=
\mathcal M_N^*
+
\mathcal S_{N,\mathrm e}^*(V=\square)
+
\mathcal S_{N,\mathrm e}^*(V\ne\square),
\end{equation}
where $\mathcal M_N^*$ is the first sum in \eqref{eq:SN-even-poisson}, while
$\mathcal S_{N,\mathrm e}^*(V=\square)$ and
$\mathcal S_{N,\mathrm e}^*(V\ne\square)$ are the second sum with
$s_{\mathrm e}(h;C)$ replaced by $s_{\mathrm e}(h;C)_{\square}$ and by
$s_{\mathrm e}(h;C)_{\ne\square}$, respectively.

We next consider the odd-degree contribution. Applying
\eqref{eq:poisson_odd} to the two sums over $H$ in
$\mathcal S_{N,\mathrm o}^*$ in the same way, the extra factor $\sqrt q$
raising the prefactor to $q^{2g+5/2}$, we obtain
\begin{equation}\label{eq:SN-odd-poisson}
\mathcal S_{N,\mathrm o}^*
=
q^{2g+5/2}
\sum_{\substack{h,m\in\A^+\\ d(h)+d(m)\le N\\ d(h)\text{ odd}}}
\frac{1}{|h|\,|m|^{3/2}}
\sum_{\substack{C\in\A_{\le g}^+\\C\mid h^\infty}}
\frac{s_{\mathrm o}(h;C)}{|C|^2}.
\end{equation}
Here $d(h)$ is odd, so both degrees in \eqref{eq:s-odd} are even and square
dual variables occur in both sums. Splitting
$s_{\mathrm o}(h;C)=s_{\mathrm o}(h;C)_{\square}+s_{\mathrm o}(h;C)_{\ne\square}$
as before gives
\begin{equation}\label{eq:SN-odd-decomposition}
\mathcal S_{N,\mathrm o}^*
=
\mathcal S_{N,\mathrm o}^*(V=\square)
+
\mathcal S_{N,\mathrm o}^*(V\ne\square),
\end{equation}
the two terms being \eqref{eq:SN-odd-poisson} with $s_{\mathrm o}(h;C)$
replaced by $s_{\mathrm o}(h;C)_{\square}$ and by
$s_{\mathrm o}(h;C)_{\ne\square}$, respectively.

Finally, Lemma~\ref{lem:AFE} and \eqref{eq:SN_def} give
$M^*(g) = \sum_{N\in\{g,g-1\}}\mathcal S_N^*$.
Summing \eqref{eq:parity_split} over $N\in\{g,g-1\}$ and using
\eqref{eq:SN-even-decomposition} and
\eqref{eq:SN-odd-decomposition}, we obtain, after renaming
$\varepsilon$,
\begin{equation}\label{eq:Mstar-decomposition}
M^*(g)
=
\mathcal M^*
+
\mathcal S^*(V=\square)
+
\mathcal S^*(V\ne\square)
+
O_\varepsilon\!\left(q^{g(1+\varepsilon)/2}\right),
\end{equation}
where
\begin{align}
\mathcal M^*
&:=
\sum_{N\in\{g,g-1\}}\mathcal M_N^*,   \label{eq:Mstar-zero}  \\
\mathcal S^*(V=\square)
&:=
\sum_{N\in\{g,g-1\}}
\left(
\mathcal S_{N,\mathrm e}^*(V=\square)
+
\mathcal S_{N,\mathrm o}^*(V=\square)
\right), \label{eq:Mstar-square} \\
\mathcal S^*(V\ne\square)
&:=
\sum_{N\in\{g,g-1\}}
\left(
\mathcal S_{N,\mathrm e}^*(V\ne\square)
+
\mathcal S_{N,\mathrm o}^*(V\ne\square)
\right). \label{eq:Mstar-nonsquare}
\end{align}

\section{Contribution from the zero dual variable}\label{sec:main-term}

The purpose of this section is to evaluate the zero-dual contribution
$\mathcal M^*$ of \eqref{eq:Mstar-decomposition}, which produces the principal
term of Theorem~\ref{thm:main}.

Put
\begin{equation}\label{eq:C-def}
\mathcal C^*(v)
:=
\prod_{P\in\mathcal P}
\left(
1-\frac{v^{d(P)}}{|P|(|P|+1)}
\right).
\end{equation}
Since $q^{d(P)}=|P|$, this is the Euler product $\mathcal C$ of
\cite[(5.4)]{Fl17-1} and \cite[(4.2)]{AM-21} in the rescaled variable:
\begin{equation}\label{eq:C-vs-Florea}
\mathcal C^*(v)=\mathcal C(v/q),
\qquad\text{where}\quad
\mathcal C(u)=\prod_{P\in\mathcal P}\left(1-\frac{u^{d(P)}}{|P|+1}\right)
\end{equation}
(in \cite{Ju20} both $\mathcal C$ and $\mathscr P$ are denoted $C$, the two
being the same product in the variables $u$ and $s$). The rescaling
is chosen so that $\mathcal C^*$ is the Euler product $\mathscr P$ of
\S\ref{sec:intro} in the variable $v=q^{1-s}$:
\begin{equation}\label{eq:C-vs-P}
\mathscr P(s)=\mathcal C^*\bigl(q^{1-s}\bigr),
\qquad
\mathcal C^*(1)=\mathscr P(1).
\end{equation}
By \eqref{eq:C-vs-Florea} the properties of $\mathcal C$ recorded in
\cite[(5.4)--(5.5)]{Fl17-1} transfer at once: the product $\mathcal C^*$
converges absolutely for $|v|<q$, and there
\begin{equation}\label{eq:Cfact}
\mathcal C^*(v)=\left(1-\frac{v}{q}\right)\mathcal K(v),
\qquad\text{where}\quad
\mathcal K(v):=\prod_{P\in\mathcal P}
\left(1+\frac{(v/q)^{d(P)}}{(1+|P|)\bigl(|P|-(v/q)^{d(P)}\bigr)}\right).
\end{equation}
The product $\mathcal K$ converges absolutely for $|v|<q^2$, so
$(1-v/q)\mathcal K(v)$ continues $\mathcal C^*$ holomorphically to
$|v|<q^2$; from here on $\mathcal C^*$ denotes this continuation.
Equivalently,
\begin{equation}\label{eq:C-over-qv}
\frac{\mathcal C^*(v)}{q-v}=\frac{\mathcal K(v)}{q}
\qquad(|v|<q^2),
\end{equation}
the apparent singularity on the left at $v=q$ being removable.

\begin{proposition}\label{prop:principal}
For every $\varepsilon>0$ and every $1<\rho<q$, one has
\begin{equation}\label{eq:Mstar-zero-decomposition}
\mathcal M^*
=
\mathcal Q_g
+
\mathcal R_0(g;\rho)
+
O_\varepsilon\!\left(q^{\varepsilon g}\right),
\end{equation}
where
\begin{equation}\label{eq:P-final}
\mathcal Q_g
:=
\frac{q^{2g+2}}{\zA(2)}\,
\frac{\mathscr P(1)}{2(1-q^{-1/2})}
\left[
(2g+2)
+\frac{4}{\log q}\frac{\mathscr P'}{\mathscr P}(1)
+2\zA\!\left(\tfrac12 \right)
\right]
\end{equation}
and
\begin{equation}\label{eq:R0}
\mathcal R_0(g;\rho)
:=
\frac{q^{2g+2}}{\zA(2)}
\sum_{N\in\{g,g-1\}}
\frac{1}{2\pi i}
\oint_{|u|=\rho}
\frac{\mathcal C^*(u^2)\,du}
{(1-q^{-1/2}u)(1-u)^2(1+u)\,u^{N+1}}.
\end{equation}
\end{proposition}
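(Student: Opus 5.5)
The plan is to reduce $\mathcal M_N^*$ to a single Perron integral in $u$, move the contour out to $|u|=\rho$, and read off $\mathcal Q_g$ from the residues at $u=\pm1$.

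\emph{Reduction to squares.} By Lemma~\ref{lem:Gc-values}(3), $\Gc(0,\psi_h)=\phi(h)\mathbf 1_{h=\square}$. So in the first sum of \eqref{eq:SN-even-poisson} only $h=k^2$ with $k\in\A^+$ survives, and the parity condition on $d(h)$ holds automatically. Next I complete the $C$-sum to all $C\mid k^\infty$. The tail $d(C)>g$ is bounded as follows:
\begin{itemize}
\item the number of $C\mid k^\infty$ of given degree $n$ is $\ll_\varepsilon q^{\varepsilon n}$, so the tail contributes $\ll_\varepsilon q^{-2g(1-\varepsilon)}$ per $k$;
\item $\phi(k^2)/|k|^3\le|k|^{-1}$, so the $k$-sum costs $O(g)$;
\item the $m$-sum converges.
\end{itemize}
Against the prefactor $q^{2g+2}$ this leaves $O_\varepsilon(q^{\varepsilon g})$. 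The completed sum is $\sum_{C\mid k^\infty}|C|^{-2}=\prod_{P\mid k}(1-|P|^{-2})^{-1}$.

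\emph{Generating function.} The $k$-part now has Euler factor
$$1+\frac{|P|}{|P|+1}\sum_{j\ge1}w^j=\frac{1-w/(|P|+1)}{1-w},\qquad w=\frac{u^{2d(P)}}{|P|}.$$
Multiplying over $P$, and including the $m$-sum $\sum_m|m|^{-3/2}u^{d(m)}=(1-q^{-1/2}u)^{-1}$, the generating function of the completed summand in the variable $u^{d(k^2)+d(m)}$ is
$$\frac{\mathcal C^*(u^2)}{(1-u^2)(1-q^{-1/2}u)}.$$
Here I used $\prod_P(1-u^{2d(P)}/|P|)^{-1}=\mathcal Z_\A(u^2/q)=(1-u^2)^{-1}$ and \eqref{eq:C-def}. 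This converges absolutely for small $|u|$. Applying \eqref{eq:perron_partial} on a small circle gives $\mathcal M_N^*$ as $q^{2g+2}\zA(2)^{-1}$ times the integrand of \eqref{eq:R0} integrated over $|u|=r$, up to the $O_\varepsilon(q^{\varepsilon g})$ error.

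\emph{Contour shift.} I then enlarge the circle to $|u|=\rho$ with $1<\rho<q$. This is allowed because $\mathcal C^*(u^2)$ is holomorphic for $|u|<q$ by \eqref{eq:Cfact}. The apparent pole at $u=\sqrt q$, which is crossed when $\rho>\sqrt q$, is removable: $\mathcal C^*(q)=0$ by \eqref{eq:Cfact}, equivalently by \eqref{eq:C-over-qv}. The only poles crossed are therefore $u=-1$ (simple) and $u=1$ (double), and the integral over $|u|=\rho$ is exactly $\mathcal R_0(g;\rho)$.

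\emph{Residue at $u=-1$.} This residue equals $\mathcal C^*(1)(-1)^{N+1}/\bigl(4(1+q^{-1/2})\bigr)$. The two values $N=g$ and $N=g-1$ carry opposite signs, so these residues cancel in the sum over $N$.

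\emph{Residue at $u=1$.} Write
$$F(u)=\frac{\mathcal C^*(u^2)}{(1-q^{-1/2}u)(1+u)u^{N+1}},$$
so that the integrand is $F(u)/(1-u)^2$ and its residue at $u=1$ is $F'(1)$. Logarithmic differentiation gives
$$\frac{F'(1)}{F(1)}=2\frac{\mathcal C^{*\prime}}{\mathcal C^*}(1)+\frac{q^{-1/2}}{1-q^{-1/2}}-\frac12-(N+1),\qquad F(1)=\frac{\mathcal C^*(1)}{2(1-q^{-1/2})}.$$
Summing $-F'(1)$ over $N\in\{g,g-1\}$ gives
$$F(1)\Bigl[(2g+2)-4\frac{\mathcal C^{*\prime}}{\mathcal C^*}(1)-\frac{2q^{-1/2}}{1-q^{-1/2}}\Bigr].$$
Two conversions finish the identification with \eqref{eq:P-final}:
\begin{itemize}
\item from \eqref{eq:C-vs-P}, $\mathscr P'/\mathscr P(1)=-\log q\cdot\mathcal C^{*\prime}/\mathcal C^*(1)$ and $\mathcal C^*(1)=\mathscr P(1)$;
\item $2\zA(\tfrac12)=2/(1-q^{1/2})=-2q^{-1/2}/(1-q^{-1/2})$.
\end{itemize}
Together these turn the bracket into exactly the bracket of $\mathcal Q_g$.

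\emph{Expected obstacles.} The main point needing care is the sign and bookkeeping of the double-pole residue, and checking that the $(-1)^N$ terms and the $u=\sqrt q$ singularity genuinely disappear. Computing the Euler factor correctly is the other step where an error would propagate.
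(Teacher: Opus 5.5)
Your proposal is correct and follows the paper's proof essentially step for step. Both arguments restrict to square $h=l^2$ via $\Gc(0,\psi_h)=\phi(h)\mathbf 1_{h=\square}$, complete the $C$-sum at cost $O_\varepsilon(q^{\varepsilon g})$, identify the generating function $\mathcal C^*(u^2)/((1-q^{-1/2}u)(1-u^2))$, apply the cumulative Perron formula, and shift to $|u|=\rho$; there the $u=-1$ residues cancel over $N\in\{g,g-1\}$ and the double pole at $u=1$ yields $\mathcal Q_g$. The one point to tighten is your bound $\ll_\varepsilon q^{\varepsilon n}$ for the number of $C\mid k^\infty$ of degree $n$: this is not uniform in $k$ in general (Rankin's trick gives $\ll_\varepsilon q^{\varepsilon(n+d(k))}$), but it holds here because $n>g\ge 2d(k)$, which is exactly the uniformity the paper invokes.
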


\subsection{The principal generating function}

For $|u|<1$, define
\begin{equation}\label{eq:A-def}
\mathcal A(u)
:=
\sum_{l,m\in\A^+}
\frac{\phi(l^2)}{|l|^3|m|^{3/2}}
\prod_{P\mid l}
\left(1-\frac{1}{|P|^2}\right)^{-1}
u^{2d(l)+d(m)}.
\end{equation}
Since $2d(l)+d(m)=d(l^2m)$, grouping the terms of \eqref{eq:A-def}
according to $f=l^2m$ gives
$\mathcal A(u)=\sum_{f\in\A^+}a(f)u^{d(f)}$, where
$$
a(f):=
\sum_{\substack{l,m\in\A^+\\ l^2m=f}}
\frac{\phi(l^2)}{|l|^3|m|^{3/2}}
\prod_{P\mid l}\left(1-\frac{1}{|P|^2}\right)^{-1}.
$$
We now evaluate $\mathcal A(u)$ in terms of the Euler product $\mathcal C^*$.

\begin{lemma}
The series $\mathcal A(u)$ converges absolutely for $|u|<1$, and there
\begin{equation}\label{eq:A}
\mathcal A(u)
=
\frac{\mathcal C^*(u^2)}
{(1-q^{-1/2}u)(1-u^2)}.
\end{equation}
Moreover $\mathcal K$ is nonvanishing on $|v|<q^2$, so that the only zero of
$\mathcal C^*$ on $|v|<q^2$ is the simple zero at $v=q$. Consequently
\begin{equation}\label{eq:A-continuation}
\mathcal A(u)
=
\frac{(1+q^{-1/2}u)\mathcal K(u^2)}
{1-u^2},
\end{equation}
and $\mathcal A$ continues meromorphically to $|u|<q$, its only poles there
being simple poles at $u=\pm1$.
\end{lemma}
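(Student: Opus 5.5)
The plan is to compute $\mathcal A(u)$ directly as a product of a sum over $m$ and an Euler product over $l$, then read off the continuation from the factorization \eqref{eq:Cfact}.

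\emph{Separating the variables.} The summand of \eqref{eq:A-def} factors as a function of $m$ times a function of $l$. For the $m$-sum, $\#\A_n^+=q^n$ gives
$$\sum_{m\in\A^+}|m|^{-3/2}u^{d(m)}=\sum_{n\ge0}q^{-n/2}u^n=(1-q^{-1/2}u)^{-1},$$
absolutely convergent for $|u|<q^{1/2}$. For the $l$-sum, the weight $l\mapsto\phi(l^2)|l|^{-3}\prod_{P\mid l}(1-|P|^{-2})^{-1}u^{2d(l)}$ is multiplicative. I therefore expect to expand it as an Euler product, which is legitimate once absolute convergence is established (see below).

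\emph{The local factor.} For $P$ with $d(P)=d$, put $x=u^{2d}$. Since $\phi(P^{2k})=|P|^{2k}(1-|P|^{-1})$ and $(1-|P|^{-1})/(1-|P|^{-2})=|P|/(|P|+1)$, the local factor is
$$1+\frac{|P|}{|P|+1}\sum_{k\ge1}\Bigl(\frac{x}{|P|}\Bigr)^k=1+\frac{x}{(|P|+1)(1-x/|P|)}.$$
Direct algebra shows this equals
$$\Bigl(1-\frac{x}{|P|}\Bigr)^{-1}\Bigl(1-\frac{x}{|P|(|P|+1)}\Bigr).$$
Multiplying over $P$, the second factor gives $\mathcal C^*(u^2)$ by \eqref{eq:C-def}. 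The first factor gives
$$\prod_P(1-u^{2d(P)}/|P|)^{-1}=\sum_{f\in\A^+}u^{2d(f)}/|f|=\sum_n u^{2n}=(1-u^2)^{-1}.$$
All of these series and products converge absolutely for $|u|<1$: the $l$-sum is dominated termwise by the product of these two absolutely convergent pieces, since $\mathcal C^*$ converges absolutely for $|u^2|<q$. This proves the convergence claim and \eqref{eq:A}.

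\emph{Nonvanishing of $\mathcal K$.} For $|v|<q^2$ put $w=(v/q)^{d(P)}$, so $|w|<q^{d(P)}=|P|$. The denominator $|P|-w$ is then nonzero. The factor of $\mathcal K$ at $P$ vanishes iff $(1+|P|)(|P|-w)+w=0$, i.e. iff $w=|P|+1$, which is impossible since $|w|<|P|$. An absolutely convergent product of nonzero factors is nonzero, so $\mathcal K(v)\ne0$ on $|v|<q^2$. By \eqref{eq:Cfact}, the only zero of $\mathcal C^*$ there is the simple zero at $v=q$.

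\emph{Continuation.} Substitute $\mathcal C^*(u^2)=(1-u^2/q)\mathcal K(u^2)$ into \eqref{eq:A} and factor $1-u^2/q=(1-q^{-1/2}u)(1+q^{-1/2}u)$. The factor $1-q^{-1/2}u$ cancels, giving \eqref{eq:A-continuation}. The right side of \eqref{eq:A-continuation} is meromorphic on $|u|<q$, since there $|u^2|<q^2$ and $\mathcal K(u^2)$ is holomorphic. Its only poles are at $u=\pm1$, and they are simple because $\mathcal K(1)\ne0$ and $1\pm q^{-1/2}\ne0$.

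The main obstacle is only the bookkeeping of the local-factor identity; the remaining steps are routine.
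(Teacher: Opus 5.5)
Your proposal is correct and follows essentially the same route as the paper. You split off the $m$-sum $(1-q^{-1/2}u)^{-1}$, write the $l$-sum as $\mathcal C^*(u^2)\,\mathcal Z_{\A}(u^2/q)=\mathcal C^*(u^2)/(1-u^2)$ via its Euler product, show that no local factor of $\mathcal K$ vanishes when $|(v/q)^{d(P)}|<|P|$, and cancel $1-q^{-1/2}u$ using \eqref{eq:Cfact}. The differences are minor: you verify the local-factor identity directly where the paper cites Florea, and your absolute-convergence step is a roundabout domination, whereas the paper uses the simpler termwise bound $\phi(l^2)\le|l|^2$, $\prod_{P\mid l}(1-|P|^{-2})^{-1}\le\zA(2)$.
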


\begin{proof}
Since $\phi(l^2)\le|l|^2$ and
$\prod_{P\mid l}\bigl(1-|P|^{-2}\bigr)^{-1}\le\zA(2)$, the terms of
\eqref{eq:A-def} are dominated in modulus by
$\zA(2)|l|^{-1}|m|^{-3/2}|u|^{2d(l)+d(m)}$, whose sum over $l,m\in\A^+$ is
finite for $|u|<1$. Hence $\mathcal A(u)$ converges absolutely for $|u|<1$
and the double sum factors as
$$
\mathcal A(u)
=\Biggl(\sum_{l\in\A^+}\frac{\phi(l^2)}{|l|^3}
\prod_{P\mid l}\left(1-\frac{1}{|P|^2}\right)^{-1}u^{2d(l)}\Biggr)
\Biggl(\sum_{m\in\A^+}\frac{u^{d(m)}}{|m|^{3/2}}\Biggr).
$$
The $m$-factor is $(1-q^{-1/2}u)^{-1}$; for the $l$-factor, the identity
$$
\frac{\phi(l^2)}{|l|^3}\prod_{P\mid l}\left(1-\frac{1}{|P|^2}\right)^{-1}
=\frac{1}{|l|}\prod_{P\mid l}\frac{|P|}{|P|+1},
$$
in which both sides are multiplicative and agree at $l=P^j$, together with
$u^{2d(l)}/|l|=(u^2/q)^{d(l)}$, gives
\begin{align*}
\sum_{l\in\A^+}\frac{\phi(l^2)}{|l|^3}
\prod_{P\mid l}\left(1-\frac{1}{|P|^2}\right)^{-1}u^{2d(l)}
&=\sum_{l\in\A^+}\left(\frac{u^2}{q}\right)^{d(l)}
\prod_{P\mid l}\frac{|P|}{|P|+1}\\
&=\mathcal C\!\left(\frac{u^2}{q}\right)
 \mathcal Z_{\A}\!\left(\frac{u^2}{q}\right)
=\frac{\mathcal C^*(u^2)}{1-u^2}.
\end{align*}
The second equality is \cite[Section~5]{Fl17-1} (see also
\cite[(3.15)]{Ju20} and \cite[(4.10)]{AM-21}); the third holds by
\eqref{eq:C-vs-Florea} and $\mathcal Z_{\A}(u^2/q)=(1-u^2)^{-1}$. This
proves \eqref{eq:A}.

The nonvanishing of $\mathcal K$ on $|v|<q^2$ follows from
\eqref{eq:Cfact}: no local factor vanishes there, since
$\bigl|(v/q)^{d(P)}\bigr|<|P|$, and the product converges absolutely.
Substituting \eqref{eq:Cfact} into \eqref{eq:A} gives
$\mathcal A(u)=(1+q^{-1/2}u)\mathcal K(u^2)(1-u^2)^{-1}$ for $|u|<1$, and
the right-hand side is meromorphic on $|u|<q$ with only the simple poles
$u=\pm1$. This proves \eqref{eq:A-continuation}.
\end{proof}

\subsection{Proof of Proposition~\ref{prop:principal}}

By \eqref{eq:Mstar-zero}, $\mathcal M^*=\sum_{N\in\{g,g-1\}}\mathcal M_N^*$.
Note first that $\mathcal R_0(g;\rho)$ is well defined for every $1<\rho<q$:
by \eqref{eq:Cfact} each of the two integrands in \eqref{eq:R0}
satisfies
\begin{equation}\label{eq:R0-integrand}
\frac{\mathcal C^*(u^2)}{(1-q^{-1/2}u)(1-u)^2(1+u)u^{N+1}}
=\frac{(1+q^{-1/2}u)\mathcal K(u^2)}{(1-u)^2(1+u)u^{N+1}}
\qquad(N\in\{g,g-1\}),
\end{equation}
and the right-hand side is holomorphic on $0<|u|<q$ apart from the poles at
$u=\pm1$, the apparent singularity at $u=q^{1/2}$ being removable.

\begin{proof}
By Lemma~\ref{lem:Gc-values}(3) one has
$\Gc(0,\psi_h)=\phi(h)\mathbf 1_{h=\square}$; in particular the zero dual
variable occurs only for square $h$, so that $d(h)$ is even, $\nu(h)=0$, and
the phase normalization of \S\ref{sec:poisson} plays no role in this section.
Therefore
$$
\mathcal M_N^*
=\frac{q^{2g+2}}{\zA(2)}
\sum_{\substack{h,m\in\A^+\\ d(h)+d(m)\le N\\ h=\square}}
\frac{\phi(h)}{|h|^{3/2}|m|^{3/2}}
\sum_{\substack{C\in\A^+_{\le g}\\ C\mid h^\infty}}\frac{1}{|C|^2}.
$$
As in \cite[Section~5]{Fl17-1}, the inner $C$-sum may be extended to all
$C\mid h^\infty$: uniformly for $d(h)\le g$,
$$
\sum_{\substack{C\in\A^+_{\le g}\\ C\mid h^\infty}}\frac{1}{|C|^2}
=\prod_{P\mid h}\left(1-\frac{1}{|P|^2}\right)^{-1}
+O_\varepsilon\!\left(q^{-(2-\varepsilon)g}\right),
$$
and the resulting error contributes $O_\varepsilon(q^{\varepsilon g})$.
Writing $h=l^2$, this gives
\begin{equation}\label{eq:MN-star}
\mathcal M_N^*
=\frac{q^{2g+2}}{\zA(2)}\sum_{\substack{l,m\in\A^+\\2d(l)+d(m)\le N}}
\frac{\phi(l^2)}{|l|^3|m|^{3/2}}
\prod_{P\mid l}\left(1-\frac{1}{|P|^2}\right)^{-1}
+O_\varepsilon\!\left(q^{\varepsilon g}\right).
\end{equation}
Since $\mathcal A(u)$ converges absolutely for $|u|<1$, the cumulative
Perron formula \eqref{eq:perron_partial}, applied to the main sum in
\eqref{eq:MN-star} with $0<r<1$, gives
\begin{equation}\label{eq:MN-star-perron}
\mathcal M_N^*
=\frac{q^{2g+2}}{\zA(2)}\frac{1}{2\pi i}
\oint_{|u|=r}\frac{\mathcal A(u)}{(1-u)u^{N+1}}\,du
+O_\varepsilon\!\left(q^{\varepsilon g}\right).
\end{equation}
Fix $1<\rho<q$.  By \eqref{eq:A-continuation} the integrand of
\eqref{eq:MN-star-perron} is \eqref{eq:R0-integrand};
since $\mathcal K(u^2)$ is holomorphic and nonzero for $|u|<q$, it has a
double pole at $u=1$, a simple pole at $u=-1$ and no other pole in
$r<|u|<\rho$.  Shifting the contour from $|u|=r$ to $|u|=\rho$, we obtain
\begin{equation}\label{eq:MN-star-contour}
\begin{aligned}
\mathcal M_N^*
=
\frac{q^{2g+2}}{\zA(2)}
\frac{1}{2\pi i}
\oint_{|u|=\rho}
\frac{\mathcal A(u)}{(1-u)u^{N+1}}\,du
-
\frac{q^{2g+2}}{\zA(2)}
\left(
\operatorname{Res}_{u=1}
+
\operatorname{Res}_{u=-1}
\right)
\frac{\mathcal A(u)}{(1-u)u^{N+1}}
+
O_\varepsilon\!\left(q^{\varepsilon g}\right).
\end{aligned}
\end{equation}

For the residue calculations we use \eqref{eq:A}, which holds on
$0<|u|<q$ by meromorphic continuation:
$$
\frac{\mathcal A(u)}{(1-u)u^{N+1}}
=
\frac{\mathcal C^*(u^2)u^{-N-1}}
{(1-q^{-1/2}u)(1-u)^2(1+u)},
$$
with a double pole at $u=1$ and a simple pole at $u=-1$. Writing
$G(u):=\mathcal C^*(u^2)u^{-N-1}\bigl((1-q^{-1/2}u)(1+u)\bigr)^{-1}$, so
that $\mathcal A(u)\bigl((1-u)u^{N+1}\bigr)^{-1}=G(u)(1-u)^{-2}$, one has
$$
\operatorname{Res}_{u=1}\frac{\mathcal A(u)}{(1-u)u^{N+1}}=G'(1),
$$
and logarithmic differentiation of $G$ at $u=1$ gives
$$
-\operatorname{Res}_{u=1}
\frac{\mathcal A(u)}{(1-u)u^{N+1}}
=
\frac{\mathcal C^*(1)}{2(1-q^{-1/2})}
\left[
(N+1)+\frac12
-\frac{q^{-1/2}}{1-q^{-1/2}}
-2\frac{(\mathcal C^*)'(1)}{\mathcal C^*(1)}
\right],
$$
while at $u=-1$
$$
-\operatorname{Res}_{u=-1}
\frac{\mathcal A(u)}{(1-u)u^{N+1}}
=
\frac{(-1)^N\mathcal C^*(1)}
{4(1+q^{-1/2})}.
$$
Substituting these residue calculations into
\eqref{eq:MN-star-contour}, we obtain
\begin{align}\label{eq:MN-star-residues}
\mathcal M_N^*
&=
\frac{q^{2g+2}}{\zA(2)}
\Biggl\{
\frac{\mathcal C^*(1)}{2(1-q^{-1/2})}
\left[
(N+1)+\frac12
-\frac{q^{-1/2}}{1-q^{-1/2}}
-2\frac{(\mathcal C^*)'(1)}{\mathcal C^*(1)}
\right]
\nonumber\\
&\hspace{25mm}
+
\frac{(-1)^N\mathcal C^*(1)}
{4(1+q^{-1/2})}
+
\frac{1}{2\pi i}
\oint_{|u|=\rho}
\frac{\mathcal A(u)}
{(1-u)u^{N+1}}\,du
\Biggr\}
+
O_\varepsilon\!\left(q^{\varepsilon g}\right).
\end{align}

Summing \eqref{eq:MN-star-residues} over $N\in\{g,g-1\}$ and using
\eqref{eq:Mstar-zero}, the contributions from the residue at $u=-1$
cancel, since $(-1)^g+(-1)^{g-1}=0$. Moreover,
$\sum_{N\in\{g,g-1\}}(N+1)=2g+1$.
Thus the total contribution of the residues is
$$
\frac{\mathcal C^*(1)}{2(1-q^{-1/2})}
\left[
(2g+2)
-\frac{2q^{-1/2}}{1-q^{-1/2}}
-\frac{4(\mathcal C^*)'(1)}{\mathcal C^*(1)}
\right].
$$

By \eqref{eq:C-vs-P} one has $\mathcal C^*(1)=\mathscr P(1)$, and
logarithmic differentiation at $s=1$ gives
$\frac{\mathscr P'}{\mathscr P}(1)=-(\log q)\frac{(\mathcal C^*)'}{\mathcal C^*}(1)$.
Moreover
$-\frac{q^{-1/2}}{1-q^{-1/2}}=\frac{1}{1-q^{1/2}}=\zA\!\left(\tfrac12\right)$.
It follows from \eqref{eq:P-final} that the total residue contribution,
multiplied by $q^{2g+2}/\zA(2)$, equals $\mathcal Q_g$. Moreover, by
\eqref{eq:A} the contour terms in the sum of \eqref{eq:MN-star-residues} add
up to $\mathcal R_0(g;\rho)$. This proves
\eqref{eq:Mstar-zero-decomposition}.
\end{proof}

We retain the displaced contour $\mathcal R_0(g;\rho)$ explicitly, rather
than estimating it, because the principal square-dual contribution produces
$-\mathcal R_0(g;\rho)$ and the two cancel exactly in \S\ref{sec:proof}.

\section{Contribution from square dual variables}\label{sec:square-dual}

The purpose of this section is to evaluate the square-dual contribution
$\mathcal S^*(V=\square)$ of \eqref{eq:Mstar-decomposition}.
Recall from \eqref{eq:Mstar-square} that
\begin{equation}\label{eq:Sstar-square-split}
\mathcal{S}^*(V=\square)
=\sum_{N\in\{g,g-1\}}\left(\mathcal{S}_{N,\mathrm e}^{*}(V=\square)+\mathcal{S}_{N,\mathrm o}^{*}(V=\square)\right).
\end{equation}

\begin{proposition}\label{prop:square-dual-main}
For every $\varepsilon>0$ and every $1<\rho<q$, one has
\begin{align}\label{eq:square-dual-main}
\mathcal{S}^*(V=\square)
=-\mathcal R_0(g;\rho)+\mathcal S_0(g)
+O_\varepsilon\!\left(q^{g(1+\varepsilon)/2}\right),
\end{align}
where $\mathcal R_0(g;\rho)$ is as in \eqref{eq:R0} and $\mathcal S_0(g)$ as in
\eqref{eq:S0-secondary}, evaluated explicitly in
Corollary~\ref{cor:S0-periodic}.
\end{proposition}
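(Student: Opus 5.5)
Following \cite[Sections~6--7]{Fl17-1} and \cite{Ju26}, we write the square dual variables as $V=l^2$ with $l\in\A^+$ and evaluate the resulting sums over $h$ through generating functions. By Lemma~\ref{lem:Gc-values}(2) and the local table \eqref{eq:Gc-values}, the map $h\mapsto\Gc(l^2,\psi_h)$ is multiplicative and phase-free for every odd $q$. Its values at $P^i$ depend only on $i$ and on $v_P(l)$, because $\bigl(\frac{V_1}{P}\bigr)=1$ whenever $V_1$ is a square. Hence, for fixed $l$, the Dirichlet series $\sum_{h}\Gc(l^2,\psi_h)|h|^{-1/2}\,(\cdots)\,w^{d(h)}$ has an explicit Euler product. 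The weights are the factor $|m|^{-3/2}$ and the $C\mid h^\infty$ sum $\sum_C|C|^{-2}$ from \eqref{eq:SN-even-poisson} and \eqref{eq:SN-odd-poisson}. As in \S\ref{sec:main-term}, we first extend the $C$-sum to all $C\mid h^\infty$ at a cost of $O_\varepsilon(q^{\varepsilon g})$. Away from primes dividing $l$, the local factor at $P\nmid l$ is $1+w^{d(P)}+\dots$, so the product is $\mathcal Z_\A(w)\times$(a product convergent in a wider disk), up to a finite correction at $P\mid l$. The parity conditions on $d(h)$ are handled by combining $w$ and $-w$. The cumulative and exact-degree $V$-sums in \eqref{eq:s-even} and \eqref{eq:s-odd} translate into conditions on $2d(l)$ relative to $\lambda-4,\lambda-3,\lambda-2,\lambda-1$. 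We encode them by a second variable $u$ for $d(l)$, and use the Perron formulas \eqref{eq:perron_exact}--\eqref{eq:perron_partial} in $d(h)+d(m)\le N$ and $d(l)$.

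This produces, for each $N\in\{g,g-1\}$ and each parity, a double contour integral in $(u,w)$. The integrand has the form (explicit rational factors)$\times\mathcal Z_\A(\cdot)\times\mathcal C^*$- or $\mathcal K$-type products, times $w^{-N-1}u^{-(\cdots)}$.

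The key step is to add the two truncations $N=g$ and $N=g-1$, and also the even and odd parities, \emph{before} shifting contours. We first shift the $w$-contour past the pole coming from $\mathcal Z_\A$ (the $h$-sum diagonal). After the change of variables matching \eqref{eq:A}, the residue there reproduces the integrand of \eqref{eq:R0} with the opposite sign; this gives $-\mathcal R_0(g;\rho)$. We then shift the remaining contours out to radius about $q^{-1/2}$ in the natural variable. The combined rational prefactor then cancels the would-be poles at $u=\pm1$ and at the other roots that appear separately in each block. The only surviving poles are the three cubic points $z^3=q^{-4}$. Their residues (double poles, because a $g$-dependence arises from $z^{-3g}$-type factors) define $\mathcal S_0(g)$ as in \eqref{eq:S0-secondary}, of size $q^{2g/3}(ag+b)$. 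The shifted contours are bounded by $q^{g(1+\varepsilon)/2}$, using absolute convergence of the $\mathcal K$-type products on the new circles. The terms discarded in truncating the $C$-sums and in boundary degrees are $O_\varepsilon(q^{g(1+\varepsilon)/2})$ by the same counting used before \eqref{eq:SN-before-poisson}.

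The main obstacle is the bookkeeping in the combination step. We must verify that the four $V$-sums in \eqref{eq:s-even}, the two in \eqref{eq:s-odd}, and the two truncations $g,g-1$ assemble into a single integrand. In that integrand, first, the principal residue is \emph{exactly} the negative of the integrand in \eqref{eq:R0}, with the same $\rho$ and no leftover $(-1)^N$ terms. Second, the spurious poles cancel, leaving only $z^3=q^{-4}$. To do this, I would express every block as an integral over the same pair of circles, with a common kernel $\mathcal K(u^2)$-type factor times a rational function. I would then check the cancellation by algebraically simplifying that rational function, and verify it against the residue at $u=-1$ that already cancelled in Proposition~\ref{prop:principal}.
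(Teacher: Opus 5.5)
Your architecture (a joint generating series in the dual degree and in $d(h)+d(m)$, Perron in both variables, the $w$-contour shifted first and the dual-degree contour second) is the paper's. However, the two steps that carry the content are misdescribed, and one identity you plan to verify is false. First, the principal cancellation does not hold at the level of integrands. Take the residues at $w=\pm z^{1/2}$ (that is, $w'=\pm q^{-1}$) and substitute $z=u^{-2}$. The four blocks $(N,\bullet)$ then carry the powers $u^{N-2g+1-\nu_\bullet}$ and rational factors unlike those of \eqref{eq:R0}. The sum of these four integrands and the two integrands of $\mathcal R_0(g;\rho)$ equals $q^{2g+2}\mathcal C^*(u^2)u^{-g-1}\mathcal E_g(u)$. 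Here $\mathcal E_g$ is $(1-q^{-1})$ times the odd part (for $g$ even) or the even part (for $g$ odd) of $u\mapsto\bigl((1-q^{-1/2}u)(1-u)^2\bigr)^{-1}$, which is not zero. So the algebraic simplification you propose will leave a remainder, and the claim that the principal residue is ``exactly the negative of the integrand in \eqref{eq:R0}'' fails. The missing idea is that this remainder is an even function of $u$, holomorphic on $1<|u|<q$; the apparent poles at $u=\pm\sqrt q$ are cancelled by the zero of $\mathcal C^*$ at $v=q$. Hence its Laurent coefficient of $u^{-1}$ vanishes and its integral over $|u|=\rho$ is $0$: only the contour integrals agree, not the integrands. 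This is also the step where summing over $N\in\{g,g-1\}$ and both parities is actually needed.

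Second, the cube-root poles do not come from cancelling ``would-be poles'' in a combined integrand. They come from a second pair of $w$-poles that your plan omits.
\begin{itemize}
\item The joint series factors as $\mathcal B(z,w')=\mathcal Z_{\A}(z)\mathcal Z_{\A}(w')\mathcal Z_{\A}(q(w')^2z)\prod_P\mathcal B_P(z,w')$. On $|z|=q^{-3/2}$ its Euler product converges absolutely only for $|w|<q^{-1/4}$.
\item To continue past that circle you must extract $\mathcal Z_{\A}(w'/(q^2z))\mathcal Z_{\A}((w')^2)^{-1}$ (Lemma~\ref{lem:B}(2)). The extracted factor has poles at $w=\pm q^2z^{3/2}$, which the $w$-shift also crosses.
\item Their residues are $z$-integrals containing $(1-q^4z^3)^{-2}$. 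One factor is $\mathcal Z_{\A}(q(w')^2z)$ at $w'=qz$; the other comes from the cumulative Perron factor $(1-w)^{-1}$ at $w=\pm q^2z^{3/2}$.
\end{itemize}
That is why $z^3=q^{-4}$ gives double poles. The power $z^{g-(3N+\nu_\bullet)/2}$ produces the factor $g$ because the pole is double, not the other way round.

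Your fixed-$l$ picture ($\mathcal Z_{\A}(w)$ times a product convergent in a wider disc, up to corrections at $P\mid l$) cannot detect $\mathcal Z_{\A}(w'/(q^2z))$. Those corrections are not uniform in $l$, and the factor emerges only from the joint Euler product. On the other hand, no combination of truncations is needed on the secondary side. $\mathcal S_0(g)$ is by definition the sum of the residues of each $J_{N,\bullet}$ at $z_0,z_1,z_2$, its only poles in $q^{-2}<|z|<q^{-1}$. The combination is used only later, in Lemma~\ref{lem:mirror-cancellation-even}, for the explicit evaluation.

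Finally, the $C$-sum cannot be extended ``as in \S\ref{sec:main-term}'' with weight $|C|^{-2}$, because the dual cutoff $\lambda=d(h)-2g+2d(C)$ depends on $d(C)$. After the Perron step, $C$ carries $|C|^{-2}z^{-d(C)}$ with $|z|=q^{-1-\kappa}$. The tail then costs $O_\varepsilon(q^{g(1+\varepsilon)/2})$ rather than $O_\varepsilon(q^{\varepsilon g})$, which is still within the stated error.
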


\subsection{The generating series \texorpdfstring{$\mathcal B$}{B}}

We first record the Euler-product input that governs both parity sectors.
For $h\in\A^+$, define
\begin{equation}\label{eq:Ah_def}
A_h(z):=\sum_{Y\in\A^+}z^{d(Y)}\frac{\Gc(Y^2,\psi_h)}{\sqrt{|h|}}.
\end{equation}
This is the phase-normalized counterpart of the function $A_f(z)$ of
\cite[Section~3.2]{Ju20}.

\begin{lemma}
For every $h\in\A^+$, the series $A_h(z)$ converges absolutely
for $|z|<q^{-1}$, and
\begin{equation}\label{eq:Ah-uniform}
\bigl|A_h(z)\bigr|\le\frac{1}{1-q|z|}
\qquad(|z|<q^{-1}),
\end{equation}
uniformly in $h$.
\end{lemma}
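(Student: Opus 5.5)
The inequality \eqref{eq:Ah-uniform} is the majorant $\sum_{n\ge0}q^n|z|^n=(1-q|z|)^{-1}$, and $\#\A_n^+=q^n$. So the plan is to show, for each $n\ge0$ and uniformly in $h$,
\begin{equation*}
\Biggl|\sum_{Y\in\A_n^+}\frac{\Gc(Y^2,\psi_h)}{\sqrt{|h|}}\Biggr|\le q^n .
\end{equation*}
Summing this against $|z|^n$ gives both the absolute convergence for $|z|<q^{-1}$ and \eqref{eq:Ah-uniform}. I would first try the stronger pointwise statement $|\Gc(Y^2,\psi_h)|\le\sqrt{|h|}$, obtained by factoring $h=\prod P^{i}$ with \eqref{eq:Gc-mult} and reading off the local sizes from \eqref{eq:Gc-values} with $V=Y^2$, so that $k=2v_P(Y)$ is even.

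In the ``generic'' local cases the table gives exactly what is needed. For $i=k+1$ the local factor has modulus $|P|^{i-1}$ or $|P|^{i-1}\sqrt{|P|}$, which is at most $|P|^{i/2}\cdot|P|^{(i-1)/2}$. For $i\ge k+2$, or $i\le k$ with $i$ odd, the factor is $0$. In particular, if $P\nmid Y$ for every $P\mid h$, then $\Gc(Y^2,\psi_h)\ne0$ forces $h$ square-free, since every exponent must equal $1$. In that case each local factor has modulus exactly $\sqrt{|P|}$ by \eqref{eq:Gc-values} and Lemma~\ref{lem:prime-gauss}, so $|\Gc(Y^2,\psi_h)|=\sqrt{|h|}$.

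The main obstacle is the remaining local case: $i\le k$ with $i$ even, and $i=k+1$ with $i\ge2$. Here the factor $\phi(P^i)$, or $|P|^{i-1}$, is larger than $|P|^{i/2}$, so the pointwise bound genuinely fails, for instance for $h=P^2$ and $P\mid Y$. For these cases I would return to the degree-$n$ average. I would group $Y$ according to $B=\prod_{P\mid h}P^{v_P(Y)}$ and write $Y=BY'$ with $(Y',h)=1$. By \eqref{eq:Gc-mult} the factor $\Gc(Y^2,\psi_h)$ then splits as a product depending only on $B$ times a product over the primes of the ``critical'' part of $h$. The factor depending on $Y'$ is a Legendre symbol $\bigl(\frac{Y'^2}{P}\bigr)=1$, so it costs nothing.

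With this grouping, the large local values $|P|^{i-1}$ attached to $P^{i}$ force $P^{\lceil (i-1)/2\rceil}\mid Y$. Such $Y$ make up a proportion $|P|^{-\lceil (i-1)/2\rceil}$ of $\A_n^+$ when $n$ is large; for small $n$ they are absent unless $n\ge d(B)$. Counting these $Y$ should compensate the excess $|P|^{(i-1)/2}$ in the Gauss sum. I would verify the compensation prime by prime, which should give $|P|^{i-1}\cdot|P|^{-(i-1)/2}\le|P|^{i/2}$ up to the even/odd bookkeeping, and then multiply over $P\mid h$ using the counting fact \eqref{eq:full-period}.

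I expect this last compensation to be the delicate point, especially when $n<d(h)$, where the residue-class counting is not exact. If it resists a clean treatment, the fallback is to prove \eqref{eq:Ah-uniform} with an absolute constant, or with a factor $q^{\varepsilon d(h)}$, which is all that the later contour shifts require.
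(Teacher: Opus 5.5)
\textbf{Verdict: genuine gap, though it closes quickly.} Your local analysis is right and uses the same ingredients as the paper. With $k=2v_P(Y)$, the table \eqref{eq:Gc-values} shows that a nonzero factor $\Gc(Y^2,\psi_{P^i})$ forces $v_P(Y)\ge\lfloor i/2\rfloor$ (your $\lceil(i-1)/2\rceil$). Its size is then at most $|P|^{i/2}\,|P|^{\lfloor i/2\rfloor}$: namely $\phi(P^i)<|P|^i$ for even $i$, and $|P|^{i-1}\sqrt{|P|}$ for $i=2v_P(Y)+1$. The entry $-|P|^{i-1}$ never occurs because $k$ is even, so in your ``$i=k+1\ge2$'' case the value is $|P|^{i-1}\sqrt{|P|}$, not $|P|^{i-1}$.

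The gap is the step that actually proves the lemma: compensating this excess by the scarcity of such $Y$. You only say it ``should'' work. The obstacle you anticipate for $n<d(h)$ is not real. Your fallback (an absolute constant, or a factor $q^{\varepsilon d(h)}$) would prove a weaker statement than the lemma, which asserts the constant $1$. As it stands the proposal is not a proof.

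The missing step is short once you stop counting exact valuations or residue classes modulo $h$. Write $h=\prod_PP^{i_P}$ and $\operatorname{sq}(h):=\prod_{P\mid h}P^{\lfloor i_P/2\rfloor}$. Multiplicativity \eqref{eq:Gc-mult} turns your local facts into two global ones. First, $\Gc(Y^2,\psi_h)=0$ unless $\operatorname{sq}(h)\mid Y$. Second, $|\Gc(Y^2,\psi_h)|\le\sqrt{|h|}\,|\operatorname{sq}(h)|$ for every $Y$. The multiples of $\operatorname{sq}(h)$ in $\A_n^+$ are exactly the products $\operatorname{sq}(h)Y''$ with $Y''\in\A^+_{n-d(\operatorname{sq}(h))}$. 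So there are $q^{n-d(\operatorname{sq}(h))}$ of them if $n\ge d(\operatorname{sq}(h))$ and none otherwise, for every $n$. No congruence counting, no \eqref{eq:full-period} and no condition $n\ge d(h)$ enter. Hence
$\sum_{Y\in\A_n^+}|\Gc(Y^2,\psi_h)|/\sqrt{|h|}\le|\operatorname{sq}(h)|\,q^{n-d(\operatorname{sq}(h))}=q^n$,
which gives the absolute convergence and \eqref{eq:Ah-uniform} at once. This is exactly the paper's proof.

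Your grouping $Y=BY'$ also closes with constant $1$ if you use only the upper bound $\#\{Y'\in\A^+_m:(Y',h)=1\}\le q^m$. Since $\Gc(Y^2,\psi_h)$ depends only on $B$, the sum over $B$, extended to all $B\mid h^\infty$, factors over $P\mid h$. Each local factor $|P|^{-i/2}\sum_{v\ge0}|\Gc(P^{2v},\psi_{P^i})|\,|P|^{-v}$ equals exactly $1$; for even $i$ it is $|P|^{-i}\phi(P^i)(1-|P|^{-1})^{-1}$.

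Finally, your target inequality puts the absolute value outside the $Y$-sum, whereas absolute convergence of the $Y$-series needs it inside. The two agree here only because every $\Gc(Y^2,\psi_h)$ is a nonnegative real number: each surviving local value is $\phi(P^i)$ or $|P|^{i-1}\sqrt{|P|}$, since the Legendre symbol of a square is $1$. You should state this.
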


\begin{proof}
For $h=\prod_{P\mid h}P^{i_P}\in\A^+$, put
$\operatorname{sq}(h):=\prod_{P\mid h}P^{\lfloor i_P/2\rfloor}$. We shall use
two auxiliary facts: $\Gc(Y^2,\psi_h)$ vanishes unless
$\operatorname{sq}(h)\mid Y$, and
$\bigl|\Gc(Y^2,\psi_h)\bigr|\le\sqrt{|h|}\,\bigl|\operatorname{sq}(h)\bigr|$.
Since $\Gc(Y^2,\psi_h)$ is multiplicative in $h$ by \eqref{eq:Gc-mult}, as
are $|h|$ and $\operatorname{sq}(h)$, it suffices to prove both facts for
$h=P^i$ with $i\ge1$.

So let $h=P^i$ and write $Y=P^vY_0$ with $v\ge0$ and $P\nmid Y_0$, so that
$Y^2=P^{2v}Y_0^2$. Applying \eqref{eq:Gc-values} with $k=2v$, the value
$\Gc(Y^2,\psi_{P^i})$ vanishes unless $i\le2v$ with $i$ even, or $i=2v+1$.
In both surviving cases $i\le2v+1$, that is, $v\ge\lfloor i/2\rfloor$, which
says exactly that $\operatorname{sq}(P^i)\mid Y$; this is the first fact for
$h=P^i$. In those same cases
$$
\frac{\bigl|\Gc(Y^2,\psi_{P^i})\bigr|}{|P|^{i/2}}
=
\begin{cases}
\phi(P^i)|P|^{-i/2}<|P|^{i/2}=|P|^{\lfloor i/2\rfloor},
& i\le2v,\ i\text{ even},\\[4pt]
|P|^{i-1/2}|P|^{-i/2}=|P|^{(i-1)/2}=|P|^{\lfloor i/2\rfloor},
& i=2v+1,
\end{cases}
$$
which is the second fact for $h=P^i$.

Now let $M\ge0$. The number of $Y\in\A_M^+$ divisible by
$\operatorname{sq}(h)$ is $q^{M-d(\operatorname{sq}(h))}$ if
$d(\operatorname{sq}(h))\le M$, and zero otherwise, so the above two
auxiliary facts give
$$
\sum_{Y\in\A_M^+}\frac{\bigl|\Gc(Y^2,\psi_h)\bigr|}{\sqrt{|h|}}
\le\bigl|\operatorname{sq}(h)\bigr|q^{M-d(\operatorname{sq}(h))}=q^M .
$$
Grouping \eqref{eq:Ah_def} according to $d(Y)$ therefore bounds
$|A_h(z)|$ by $\sum_{M\ge0}q^M|z|^M=(1-q|z|)^{-1}$, which is finite for
$|z|<q^{-1}$; this proves both assertions.
\end{proof}

We also introduce the two-variable generating series that encodes the
$h$-sum after the $C$-extension. We denote its second variable by $w'$
to distinguish it from the variable $w$ in which the Perron formula
\eqref{eq:perron_partial} is applied below; the two variables will later be
related by $w'=w/(qz^{1/2})$.
For $q^{-2}<|z|<q^{-1}$ and $|w'|$ sufficiently small, define
\begin{equation}\label{eq:B_def}
\mathcal B(z,w')
:=
\sum_{h\in\A^+}
(w')^{d(h)}A_h(z)
\prod_{P\mid h}
\left(
1-\frac{1}{|P|^2z^{d(P)}}
\right)^{-1}.
\end{equation}

The series $\mathcal B(z,w')$ is the
phase-normalized counterpart of the series of \cite[Lemma~6.2]{Fl17-1}, and
the following lemma is the corresponding form of
\cite[Lemmas~6.2--6.3]{Fl17-1}. Its proof follows those lemmas, and
indicates where the phase normalization enters.

\begin{lemma}[Euler-product factorization of $\mathcal B$]\label{lem:B}
For $q^{-2}<|z|<q^{-1}$ and $|w'|$ sufficiently small, the series
$\mathcal B(z,w')$ converges absolutely, and it admits the following
factorization.

\begin{enumerate}[label=\textup{(\arabic*)}]
\item
One has
\begin{equation}\label{eq:B-factorization}
\mathcal{B}(z,w')
=
\mathcal Z_{\A}(z)
\mathcal Z_{\A}(w')
\mathcal Z_{\A}(q(w')^2z)
\prod_{P\in\mathcal P}\mathcal{B}_P(z,w'),
\end{equation}
where, writing
$Z_P:=z^{d(P)}$ and $W_P:=(w')^{d(P)}$ for $P\in\mathcal P$,
the local factor is
\begin{equation}\label{eq:BP}
\mathcal B_P(z,w')
=
1+
\frac{
W_P\bigl(1-|P|^2Z_P^2\bigr)
-|P|(|P|-1)Z_PW_P^2
-|P|Z_P\bigl(1-|P|Z_P\bigr)W_P^3
}{
|P|^2Z_P-1
}.
\end{equation}
Moreover, the Euler product
$\prod_{P\in\mathcal P}\mathcal B_P(z,w')$
converges absolutely under the weaker restriction
\begin{equation}\label{eq:B-region}
|w'|<q|z|,
\qquad
|w'|<q^{-1/2},
\qquad
|w'z|<q^{-1}.
\end{equation}

\item
For each $P\in\mathcal P$, define
\begin{equation}\label{eq:DP}
\mathcal D_P(z,w')
:=
\mathcal B_P(z,w')
\left(
1-\frac{W_P}{|P|^2Z_P}
\right)
(1-W_P^2)^{-1}.
\end{equation}
Then one has
\begin{align}\label{eq:B-continuation}
\mathcal{B}(z,w')
&=
\mathcal Z_{\A}(z)
\mathcal Z_{\A}(w')
\mathcal Z_{\A}(q(w')^2z)
\mathcal Z_{\A}\!\left(\frac{w'}{q^2z}\right)
\mathcal Z_{\A}((w')^2)^{-1}
\prod_{P\in\mathcal P}\mathcal{D}_P(z,w').
\end{align}
The right-hand side furnishes a meromorphic continuation of $\mathcal{B}(z,w')$
to the region
\begin{equation}\label{eq:D-region}
|z|>q^{-2},
\qquad
|w'|^2<q|z|,
\qquad
|w'|<q^3|z|^2,
\qquad
|w'|<1,
\qquad
|w'z|<q^{-1},
\end{equation}
where the Euler product
$\prod_{P\in\mathcal P}\mathcal{D}_P(z,w')$
converges absolutely and uniformly on compact subsets.
\end{enumerate}
\end{lemma}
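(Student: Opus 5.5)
The plan is to expand $\mathcal B(z,w')$ as a double sum over pairs $(h,Y)\in\A^+\times\A^+$ and to show that the summand is multiplicative jointly in the pair. Substituting \eqref{eq:Ah_def} into \eqref{eq:B_def}, the general term is
\[
(w')^{d(h)}z^{d(Y)}\,\frac{\Gc(Y^2,\psi_h)}{\sqrt{|h|}}\prod_{P\mid h}\Bigl(1-\frac{1}{|P|^2Z_P}\Bigr)^{-1}.
\]
By \eqref{eq:Gc-mult} the Gauss sum factors over the prime powers $P^{i_P}\,\|\,h$. By \eqref{eq:Gc-values} with $V=Y^2$, the local value $\Gc(Y^2,\psi_{P^i})$ depends only on $i$ and on $v=v_P(Y)$, because in the case $i=k+1$ odd the symbol $\bigl(\tfrac{V_1}{P}\bigr)$ equals $\bigl(\tfrac{Y_0^2}{P}\bigr)=1$. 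Crucially, no $\epsq$-phase appears, which is where the normalization of \S\ref{sec:poisson} enters. Hence the summand equals $\prod_P F_P(i_P,v_P(Y))$, and absolute convergence is justified by the bound $|\Gc(Y^2,\psi_h)|\le\sqrt{|h|}\,|\operatorname{sq}(h)|$ from the proof of \eqref{eq:Ah-uniform}, which gives absolute convergence in the stated range with $|w'|$ small. We then obtain
\[
\mathcal B(z,w')=\prod_P\sum_{v\ge0}Z_P^{\,v}\Bigl[1+\Bigl(1-\frac{1}{|P|^2Z_P}\Bigr)^{-1}\sum_{i\ge1}W_P^{\,i}\,\frac{\Gc(P^{2v},\psi_{P^i})}{|P|^{i/2}}\Bigr].
\]

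Next I will evaluate the local factor explicitly. By \eqref{eq:Gc-values}, the only nonzero terms are $i\le 2v$ with $i$ even, each contributing $\phi(P^i)|P|^{-i/2}=|P|^{i/2}(1-|P|^{-1})$, and $i=2v+1$, contributing $|P|^{2v}\sqrt{|P|}\,|P|^{-(2v+1)/2}=|P|^{v}$. Summing first the finite geometric series in $i$ and then the series in $v$ produces a rational function of $Z_P$ and $W_P$. I will then check by direct algebra that dividing this rational function by the three local zeta factors $(1-Z_P)^{-1}$, $(1-W_P)^{-1}$ and $(1-|P|W_P^2Z_P)^{-1}$ leaves exactly \eqref{eq:BP}. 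Taking the product over $P$ then gives \eqref{eq:B-factorization}, because $\prod_P(1-X^{d(P)})^{-1}=\mathcal Z_\A(X)$ for $X=z$, $X=w'$ and $X=q(w')^2z$.

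For the convergence region \eqref{eq:B-region}, I will bound $|\mathcal B_P-1|$ by a sum of monomials: roughly $|W_P|/(|P|^2|Z_P|)$, $|W_P|\,|Z_P|$, $|W_P|^2$ and $|P|\,|W_P|^3$. Each of these is summable over $P$ exactly under one of the conditions $|w'|<q|z|$, $|w'z|<q^{-1}$, $|w'|<q^{-1/2}$, using $\#\mathcal P_n\le q^n/n$. For part~(2), the factor $1-W_P/(|P|^2Z_P)$ is designed to cancel the first-order term $W_P/(|P|^2Z_P)$ in $\mathcal B_P-1$, and $(1-W_P^2)^{-1}$ cancels the $-W_P^2$ part. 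The compensating global factors are $\mathcal Z_\A(w'/(q^2z))$ and $\mathcal Z_\A((w')^2)^{-1}$, which yields \eqref{eq:B-continuation}. I will then expand $\mathcal D_P-1$ and show that every surviving monomial is at worst of size $|W_P|^2/(|P|\,|Z_P|)$, $|W_P|/(|P|^3|Z_P|^2)$, $|W_P|\,|Z_P|$ or $|W_P|^3|P|\ldots$, each summable under one of the inequalities in \eqref{eq:D-region}. Uniform convergence on compact subsets then follows from the Weierstrass M-test, and this gives the meromorphic continuation.

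The main obstacle is this last bookkeeping step: the algebraic verification of \eqref{eq:BP} from the raw local sum, and above all the expansion of $\mathcal D_P-1$ into monomials matched to the conditions in \eqref{eq:D-region}. My first attempt will be to write $\mathcal B_P$ over the common denominator $|P|^2Z_P-1$, multiply out the two correcting factors, and group the numerator by total degree in $W_P$. If that grouping becomes unwieldy, I will instead follow the term-by-term comparison in \cite[Lemmas~6.2--6.3]{Fl17-1}, substituting $u\mapsto w'$ and checking that no phase modifies the entries there.
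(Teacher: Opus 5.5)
Your route is the paper's, and it is correct: joint multiplicativity in $(h,Y)$ via \eqref{eq:Gc-mult} and \eqref{eq:Gc-values} with $k=2v$, $V_1=Y_0^2$; then the finite $i$-sum and the geometric $v$-sum; then division by the local factors of $\mathcal Z_{\A}(z)$, $\mathcal Z_{\A}(w')$ and $\mathcal Z_{\A}(q(w')^2z)$; then extraction of $\mathcal Z_{\A}(w'/(q^2z))$ and $\mathcal Z_{\A}((w')^2)^{-1}$. The paper only cites \cite[Lemmas~6.2--6.3]{Fl17-1} for the algebra you propose to do by hand. Your local sum is right: carrying it out reproduces \eqref{eq:BP} exactly.

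What needs correcting is the monomial bookkeeping you predict for the two convergence regions. Several of your sizes are off by a factor of $|P|$, and taken literally they would give convergence only on smaller regions.

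In part~(1), the cubic term of $\mathcal B_P-1$ is $-|P|Z_P(1-|P|Z_P)W_P^3/(|P|^2Z_P-1)\approx -W_P^3/|P|+Z_PW_P^3$. It is not of size $|P||W_P|^3$; summing that over $P$ would need $|w'|<q^{-2/3}$, which \eqref{eq:B-region} does not give.

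In part~(2), the exact expansion is
\[
\mathcal D_P-1=\frac{\mathcal N_P}{|P|^2Z_P\bigl(|P|^2Z_P-1\bigr)\bigl(1-W_P^2\bigr)},
\]
where
\begin{align*}
\mathcal N_P={}&W_P-W_P^2-|P|^4Z_P^3W_P+\bigl(|P|^3+|P|^2\bigr)Z_P^2W_P^2-|P|^2Z_PW_P^2\\
&+\bigl(|P|^2-|P|\bigr)Z_PW_P^3+\bigl(|P|^4Z_P-|P|^3\bigr)Z_P^2W_P^3+|P|Z_PW_P^4-|P|^2Z_P^2W_P^4 .
\end{align*}
For large $d(P)$, with $|z|>q^{-2}$ and $|w'|<1$, the denominator has size $\asymp|P|^4|Z_P|^2$. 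The dominant monomials are therefore
\[
W_P/(|P|^4Z_P^2),\qquad W_P^2/(|P|^2Z_P),\qquad W_P^2/|P|,\qquad Z_PW_P ,
\]
and every other monomial is dominated by these. Their sums over $P$ converge precisely when
\[
|w'|<q^3|z|^2,\qquad |w'|^2<q|z|,\qquad |w'|<1,\qquad |w'z|<q^{-1},
\]
which are the conditions of \eqref{eq:D-region}, with $|z|>q^{-2}$ keeping the denominator away from zero.

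Your predicted sizes $|W_P|/(|P|^3|Z_P|^2)$ and $|W_P|^2/(|P||Z_P|)$ would instead require $|w'|<q^2|z|^2$ and $|w'|^2<|z|$. The second would not suffice downstream: $|w'|^2<q|z|$ is exactly the condition that becomes $|w|<1$ in Lemma~\ref{lem:K-poles}, allowing the $w$-contour to pass the poles of modulus $q^{-1/4}$.

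So carry out the expansion rather than rely on the predicted list. With the correct monomials, the Weierstrass M-test argument goes through exactly as you planned.
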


\begin{proof}
We first show that $\mathcal B(z,w')$ converges
absolutely for $q^{-2}<|z|<q^{-1}$ and $|w'|$ sufficiently small, which
justifies the rearrangement into an Euler product made below. By
\eqref{eq:Ah-uniform} one has $|A_h(z)|\le(1-q|z|)^{-1}$ for $|z|<q^{-1}$,
uniformly in $h$, while $|z|>q^{-2}$ gives
$\prod_{P\mid h}\bigl|1-|P|^{-2}z^{-d(P)}\bigr|^{-1}
\le\bigl(1-q^{-2}/|z|\bigr)^{-d(h)}$. Since $\#\A_n^+=q^n$ for every
$n\ge0$, it follows that
$$
\mathcal B(z,w')\ll_z\sum_{n\ge0}
\left[q|w'|\left(1-\frac{q^{-2}}{|z|}\right)^{-1}\right]^n,
$$
which converges for $|w'|$ sufficiently small.

Fix $P\in\mathcal P$ and write $Y=P^vY_0$ with $v\ge0$ and $P\nmid Y_0$, so
that $Y^2=P^{2v}Y_0^2$ and $\bigl(\frac{Y_0^2}{P}\bigr)=1$. By
\eqref{eq:Gc-values} with $k=2v$ and $V_1=Y_0^2$, the values
$\Gc(Y^2,\psi_{P^i})$ entering the $P$-Euler factor vanish for every
$i\ge1$ except
$$
\Gc(Y^2,\psi_{P^{2j}})=\phi(P^{2j})
\quad(1\le j\le v),
\qquad
\Gc(Y^2,\psi_{P^{2v+1}})=|P|^{2v}\sqrt{|P|}.
$$
As in \cite[Lemma~6.2]{Fl17-1}, feeding them into the Euler-product
computation --- the finite sum over $j$ summed first, then the geometric
series over $v\ge0$, and the result multiplied by the local factors
$(1-Z_P)$, $(1-W_P)$ and $(1-|P|W_P^2Z_P)$ of $\mathcal Z_{\A}(z)^{-1}$,
$\mathcal Z_{\A}(w')^{-1}$ and $\mathcal Z_{\A}(q(w')^2z)^{-1}$,
respectively --- gives \eqref{eq:B-factorization} with the same local factor
\eqref{eq:BP}, and $\prod_{P}\mathcal B_P(z,w')$ converges absolutely in
\eqref{eq:B-region}.

Part~(2) corresponds to \cite[Lemma~6.3]{Fl17-1}. The local
factors at $P$ of $\mathcal Z_{\A}(w'/(q^2z))$ and
$\mathcal Z_{\A}((w')^2)^{-1}$ are
$\bigl(1-W_P/(|P|^2Z_P)\bigr)^{-1}$ and $1-W_P^2$, respectively, so
\eqref{eq:DP} gives
$$
\prod_{P\in\mathcal P}\mathcal B_P(z,w')
=
\mathcal Z_{\A}\!\left(\frac{w'}{q^2z}\right)
\mathcal Z_{\A}((w')^2)^{-1}
\prod_{P\in\mathcal P}\mathcal D_P(z,w'),
$$
which turns \eqref{eq:B-factorization} into \eqref{eq:B-continuation} in the
initial domain, and $\prod_{P}\mathcal D_P(z,w')$ converges absolutely and
uniformly on compact subsets of \eqref{eq:D-region}. Since the remaining
factors on the right-hand side of \eqref{eq:B-continuation} are meromorphic there, that
right-hand side provides the asserted meromorphic continuation of
$\mathcal B(z,w')$.
\end{proof}

Finally we record the value of the Euler product
$\prod_{P}\mathcal B_P(z,w')$ at $w'=q^{-1}$, in the variable $u$ with
$z=u^{-2}$, where it is given in terms of the Euler product
$\mathcal C^*$ of \eqref{eq:C-def}.

Put $\tilde u:=u^2/q$, so that $u^{-2}=1/(q\tilde u)$. By \eqref{eq:BP} the
factors $\mathcal B_P$ are those of \cite[Lemma~6.2]{Fl17-1}, so the
evaluation of $\prod_{P}\mathcal B_P\bigl(1/(q\tilde u),q^{-1}\bigr)$ in
\cite[Section~6]{Fl17-1}, and the absolute convergence shown there for
$q^{-1}<|\tilde u|<q$, apply here; in the variable $u$ that range is
$1<|u|<q$, and the evaluation reads, with $\mathcal C$ as in
\eqref{eq:C-vs-Florea},
$$
\prod_{P\in\mathcal P}\mathcal B_P(u^{-2},q^{-1})
=
\frac{\mathcal C(\tilde u)}{\zA(2)}\,
\frac{1-(q\tilde u)^{-1}}{1-\tilde u}.
$$
The only change here is the rescaling \eqref{eq:C-vs-Florea}, which turns
$\mathcal C(\tilde u)=\mathcal C(u^2/q)$ into $\mathcal C^*(u^2)$; together
with $1-(q\tilde u)^{-1}=1-u^{-2}$ and $1-\tilde u=1-u^2/q$ it gives
\begin{equation}\label{eq:B-principal}
\prod_{P\in\mathcal P}\mathcal{B}_P(u^{-2},q^{-1})
=
\frac{\mathcal C^*(u^2)}{\zA(2)}
\frac{q(u^2-1)}{u^2(q-u^2)}
\qquad(1<|u|<q).
\end{equation}
Both sides of \eqref{eq:B-principal} are holomorphic on $1<|u|<q$: by
\eqref{eq:C-over-qv} with $v=u^2$ its right-hand side equals
$\zA(2)^{-1}\mathcal K(u^2)(u^2-1)u^{-2}$, and $\mathcal K(u^2)$ is
holomorphic for $|u|<q$.

\subsection{Proof of Proposition~\ref{prop:square-dual-main}}

Fix $N\in\{g,g-1\}$. Restricting the two cumulative sums of
\eqref{eq:s-even} to the nonzero squares $V=Y^2$ and using
$\zA(2)^{-1}=(q-1)/q$ gives
\begin{equation}\label{eq:SN-even-square}
s_{\mathrm e}(h;C)_{\square}
=\frac{1}{\zA(2)}
\Biggl(q\sum_{Y\in\A^+_{\le\frac{d(h)}{2}-g+d(C)-2}}\frac{\Gc(Y^2,\psi_h)}{\sqrt{|h|}}
-\sum_{Y\in\A^+_{\le\frac{d(h)}{2}-g+d(C)-1}}\frac{\Gc(Y^2,\psi_h)}{\sqrt{|h|}}\Biggr),
\end{equation}
while the same restriction in \eqref{eq:s-odd} gives
\begin{equation}\label{eq:SN-odd-square}
s_{\mathrm o}(h;C)_{\square}
=\sum_{Y\in\A^+_{\frac{d(h)-3}{2}-g+d(C)}}\frac{\Gc(Y^2,\psi_h)}{\sqrt{|h|}}
-\frac{1}{q}\sum_{Y\in\A^+_{\frac{d(h)-1}{2}-g+d(C)}}\frac{\Gc(Y^2,\psi_h)}{\sqrt{|h|}}.
\end{equation}

We convert the
$Y$-sums in \eqref{eq:SN-even-square} and \eqref{eq:SN-odd-square} into
contour integrals over $|z|=q^{-1-\kappa}$, where $\varepsilon>0$ is fixed
and $0<\kappa<\min\{1/2,\varepsilon\}$. The even sector uses the
cumulative form \eqref{eq:perron_partial} of the function-field Perron
formula and the odd sector the exact form \eqref{eq:perron_exact}, applied
to the series $A_h(z)$ of \eqref{eq:Ah_def}; in each sector the two shifted
$Y$-sums combine into the single factor $qz-1$, and we obtain
\begin{align}
s_{\mathrm e}(h;C)_{\square}
&=\frac{1}{\zA(2)}\,\frac{1}{2\pi i}
\oint_{|z|=q^{-1-\kappa}}
\frac{(qz-1)A_h(z)}{(1-z)\,z^{\frac{d(h)}{2}-g+d(C)}}\,dz,
\label{eq:se-square-integral}\\
s_{\mathrm o}(h;C)_{\square}
&=\frac{1}{q}\,\frac{1}{2\pi i}
\oint_{|z|=q^{-1-\kappa}}
\frac{(qz-1)A_h(z)}{z^{\frac{d(h)+1}{2}-g+d(C)}}\,dz.
\label{eq:so-square-integral}
\end{align}
From here on the two sectors run in parallel, and we carry them together.
For $\bullet\in\{\mathrm e,\mathrm o\}$, let $\nu_\bullet\in\{0,1\}$ denote
the value taken on the sector $\bullet$ by the parity $\nu(h)$ of
\S\ref{sec:poisson}, so that $\nu_{\mathrm e}=0$ and $\nu_{\mathrm o}=1$, and
put
$$
c_{\mathrm e}:=\frac{q^{2g+2}}{\zA(2)},
\qquad
c_{\mathrm o}:=q^{2g+3/2},
\qquad
\Theta_\bullet(z):=\frac{c_\bullet z^g(qz-1)}{(1-z)^{1-\nu_\bullet}}.
$$
Inserting \eqref{eq:se-square-integral} into \eqref{eq:SN-even-poisson} and
\eqref{eq:so-square-integral} into \eqref{eq:SN-odd-poisson}, whose
prefactors $q^{2g+2}$ and $q^{2g+5/2}$ are absorbed into $c_{\mathrm e}$ and
$c_{\mathrm o}$, gives, for $\bullet\in\{\mathrm e,\mathrm o\}$,
\begin{equation}\label{eq:SN-square-integral}
\mathcal{S}_{N,\bullet}^{*}(V=\square)
=
\frac{1}{2\pi i}
\oint_{|z|=q^{-1-\kappa}}
\Theta_\bullet(z)
\sum_{\substack{h,m\in\A^+\\
d(h)+d(m)\le N\\
\nu(h)=\nu_\bullet}}
\frac{A_h(z)}{|h|\,|m|^{3/2}z^{\frac{d(h)+\nu_\bullet}{2}}}
\sum_{\substack{C\in\A^+_{\le g}\\ C\mid h^\infty}}
\frac{1}{|C|^2z^{d(C)}}\,dz.
\end{equation}

We extend the $C$-sum in \eqref{eq:SN-square-integral} to its full Euler
product: for each
$h\in\A^+$,
\begin{equation}\label{eq:C-extension-square}
\sum_{\substack{C\in\A^+_{\le g}\\ C\mid h^\infty}}
\frac{1}{|C|^2z^{d(C)}}
=
\prod_{P\mid h}
\left(
1-\frac{1}{|P|^2z^{d(P)}}
\right)^{-1}
-
\sum_{\substack{C\mid h^\infty\\ d(C)\ge g+1}}
\frac{1}{|C|^2z^{d(C)}}.
\end{equation}
One input changes in bounding the tail: \eqref{eq:Ah-uniform} supplies the
pointwise bound $|A_h(z)|\le(1-q|z|)^{-1}=(1-q^{-\kappa})^{-1}\ll_\kappa1$,
uniform in $h$, in place of the average used in \cite[Section~6]{Fl17-1}.
With it, for sufficiently small $\delta>0$ the divisor bound for
polynomials supported on the prime divisors of $h$ gives, on
$|z|=q^{-1-\kappa}$,
$$
\sum_{\substack{C\mid h^\infty\\d(C)\ge g+1}}\frac{1}{|C|^2|z|^{d(C)}}
=\sum_{\substack{C\mid h^\infty\\d(C)\ge g+1}}q^{-(1-\kappa)d(C)}
\ll_{\kappa,\delta}q^{-(1-\kappa)g+\delta(g+d(h))},
$$
and, using $d(h)+d(m)\le N\le g$ and $\sum_{m\in\A^+}|m|^{-3/2}\ll_q1$, the
tail contributes $\ll_{\kappa,\delta}q^{((1+\kappa)/2+2\delta)g}$ in
either parity sector. Choosing $\delta>0$ with
$\kappa+4\delta<\varepsilon$ makes
this $O_\varepsilon\bigl(q^{g(1+\varepsilon)/2}\bigr)$.
After the extension \eqref{eq:C-extension-square} the integrands are
holomorphic in the annulus
$q^{-3/2}\le|z|\le q^{-1-\kappa}$, since $A_h(z)$ is holomorphic for
$|z|<q^{-1}$ while the possible poles of
$\prod_{P\mid h}(1-|P|^{-2}z^{-d(P)})^{-1}$ lie on $|z|=q^{-2}$. The
$z$-contour may therefore be moved to $|z|=q^{-3/2}$ without crossing a
pole, and we obtain
\begin{equation}\label{eq:SN-square-fullC}
\mathcal S_{N,\bullet}^{*}(V=\square)
=
\frac{1}{2\pi i}
\oint_{|z|=q^{-3/2}}
\Theta_\bullet(z)\widetilde H_N^{\bullet}(z)\,dz
+
O_\varepsilon\!\left(q^{g(1+\varepsilon)/2}\right),
\end{equation}
where
$$
\widetilde H_N^{\bullet}(z)
:=
\sum_{\substack{h,m\in\A^+\\
d(h)+d(m)\le N\\
\nu(h)=\nu_\bullet}}
\frac{A_h(z)}{|h|\,|m|^{3/2}z^{\frac{d(h)+\nu_\bullet}{2}}}
\prod_{P\mid h}
\left(
1-\frac{1}{|P|^2z^{d(P)}}
\right)^{-1}.
$$

In $\widetilde H_N^{\bullet}$ the constraint $d(h)+d(m)\le N$ is on the
total degree, so $w$ tracks $d(h)+d(m)$ and cannot
select the parity of $d(h)$ alone; that parity has to be extracted from
$\mathcal B$ instead. Accordingly, put
$$
\mathcal B^{\bullet}(z,w')
:=
\tfrac12\bigl(\mathcal B(z,w')+(-1)^{\nu_\bullet}\mathcal B(z,-w')\bigr),
$$
which selects the terms of \eqref{eq:B_def} with $\nu(h)=\nu_\bullet$.

For the rest of this section, fix either square root $z^{1/2}$ of $z$ and
put $z^{3/2}:=zz^{1/2}$; all half-integral powers of $z$ refer to this
choice.
Set $w'=w/(qz^{1/2})$. Then $|h|=q^{d(h)}$ and $\#\A_n^+=q^n$ give
$$
(w')^{d(h)}=\frac{w^{d(h)}}{|h|z^{\frac{d(h)}{2}}}, \qquad
\sum_{m\in\A^+}\frac{w^{d(m)}}{|m|^{3/2}}=\frac{1}{1-q^{-1/2}w},
$$
respectively, the second being the free divisor's contribution.

Define
$$
K_{N,\bullet}(z,w)
:=
\frac{
z^{-\nu_\bullet/2}\,
\mathcal B^{\bullet}\!\left(z,\dfrac{w}{qz^{1/2}}\right)
}{
(1-q^{-1/2}w)(1-w)w^{N+1}
}.
$$
The cumulative Perron formula \eqref{eq:perron_partial} then gives
\begin{equation}\label{eq:Htilde-perron}
\widetilde H_N^{\bullet}(z)
=
\frac{1}{2\pi i}
\oint_{|w|=r}
K_{N,\bullet}(z,w)\,dw,
\end{equation}
where $r>0$ is sufficiently small; in particular, we may assume that
$r<q^{-3/4}$, which under $w'=w/(qz^{1/2})$ and $|z|=q^{-3/2}$ reads
$|w'|<q^{-1}$.

\begin{lemma}[Poles of $K_{N,\bullet}$]
\label{lem:K-poles}
For $|z|=q^{-3/2}$, $K_{N,\bullet}(z,\cdot)$ is meromorphic on the disc
$|w|<1$, with no poles there other than the five points $w=0$,
$w=\pm z^{1/2}$ of modulus $q^{-3/4}$, and $w=\pm q^{2}z^{3/2}$ of modulus
$q^{-1/4}$.
\end{lemma}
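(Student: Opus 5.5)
The plan is to substitute $w'=w/(qz^{1/2})$ into the meromorphic continuation \eqref{eq:B-continuation} of $\mathcal B$ from Lemma~\ref{lem:B}(2). For $\mathcal B(z,-w')$ we use the same formula with $w'$ replaced by $-w'$, and then read off the singularities of every factor of $K_{N,\bullet}(z,w)$ on $|w|<1$ with $|z|=q^{-3/2}$ fixed.

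\textbf{Step 1: the continuation applies for all $|w|<1$.} With $|z|=q^{-3/2}$ and $|w|<1$ we have $|w'|=|w|/(q\cdot q^{-3/4})<q^{-1/4}$. We check the conditions of \eqref{eq:D-region} in turn.
\begin{itemize}
\item $|z|=q^{-3/2}>q^{-2}$.
\item $|w'|^2<q^{-1/2}=q|z|$.
\item $q^3|z|^2=1$, and $|w'|<q^{-1/4}<1$ covers both $|w'|<q^3|z|^2$ and $|w'|<1$.
\item $|w'z|<q^{-1/4-3/2}<q^{-1}$.
\end{itemize}
The same holds for $-w'$. Hence $\prod_P\mathcal D_P(z,\pm w')$ is holomorphic in $w$ on $|w|<1$. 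For the factor $\mathcal Z_\A((w')^2)^{-1}=1-q(w')^2$, note that $|q(w')^2|<q^{1/2}$. Since this factor is entire, it causes no poles even where it vanishes.

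\textbf{Step 2: locate the zeta poles.} Each factor $\mathcal Z_\A(X)=(1-qX)^{-1}$ has its only pole at $X=q^{-1}$. We go through them one at a time.
\begin{itemize}
\item $\mathcal Z_\A(z)$ does not depend on $w$, and it is finite because $z\ne q^{-1}$.
\item $\mathcal Z_\A(\pm w')$ would need $|w'|=q^{-1}$, that is $|w|=q^{-3/4}\cdot$ (modulus check). Concretely, $w'=\pm q^{-1}$ means $w=\pm z^{1/2}$, and these points have modulus $q^{-3/4}<1$.
\item $\mathcal Z_\A(q(w')^2z)$ has its pole at $q^2(w')^2z=1$. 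This gives $w^2 z^{-1}q^{-2}q^2z\cdot$, i.e.\ $(w')^2=1/(q^2z)$, so $w^2=q^2z\cdot 1/(q^2z)\cdot z=z$. These are the same points $\pm z^{1/2}$, so this factor is checked against the points already listed rather than producing new ones.
\item $\mathcal Z_\A(\pm w'/(q^2z))$ has its pole at $w'=\pm qz$, i.e.\ $w=\pm q^2z^{3/2}$. These points have modulus $q^2q^{-9/4}=q^{-1/4}<1$.
\end{itemize}
So after the $\pm$ symmetrization the candidate poles are exactly the listed points.

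\textbf{Step 3: the remaining factors.} The factor $z^{-\nu_\bullet/2}$ is constant in $w$. The denominator $(1-q^{-1/2}w)$ vanishes only at $|w|=q^{1/2}$, and $(1-w)$ only at $w=1$, so neither has a zero in $|w|<1$. The factor $w^{N+1}$ gives the pole at $w=0$.

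The statement asserts only ``no poles other than'' these five points, so we do not need to prove that the listed points are genuine poles; an actual cancellation there would be harmless.

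\textbf{Main obstacle.} The delicate step is the bookkeeping in Step 2 for $\mathcal Z_\A(q(w')^2z)$. One must make sure that, with the fixed branch of $z^{1/2}$, its singular set coincides with $\{\pm z^{1/2}\}$ and adds nothing new. One must also verify the region conditions of \eqref{eq:D-region} uniformly right up to $|w|\to1$, especially the boundary case $|w'|<q^3|z|^2=1$.
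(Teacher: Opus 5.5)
Your route is the paper's: substitute $w'=w/(qz^{1/2})$ into the continuation \eqref{eq:B-continuation}, verify \eqref{eq:D-region} for $\pm w'$ on $|z|=q^{-3/2}$, $|w|<1$, and read off the singularities factor by factor. Step~1 and Step~3 are correct. So is your treatment of $\mathcal Z_{\A}(\pm w')$, $\mathcal Z_{\A}(\pm w'/(q^2z))$ and $\mathcal Z_{\A}((w')^2)^{-1}$.

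The third bullet of Step~2, however, contains an algebra error. From $(w')^2=1/(q^2z)$ and $w=qz^{1/2}w'$ one gets $w^2=q^2z\,(w')^2=1$, not $w^2=z$; you inserted a spurious factor $z$. Equivalently, $q(w')^2z=w^2/q$, so $\mathcal Z_{\A}(q(w')^2z)=(1-w^2)^{-1}$, and its poles are at $w=\pm1$.

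The lemma still holds, but for a different reason from the one you give: $\pm1$ lie on the boundary circle and the disc is open. This is exactly why the lemma is stated for $|w|<1$. It also matches the fact that the binding condition in \eqref{eq:D-region} is $|w'|^2<q|z|$, which reads $|w|<1$. The condition $|w'|<q^3|z|^2=1$ that you single out as delicate is in fact slack, since $|w'|<q^{-1/4}$.

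Your claim that this factor's singular set ``coincides with $\{\pm z^{1/2}\}$'' is also not harmless. Because $\mathcal Z_{\A}(q(w')^2z)$ is even in $w'$, it occurs in both $\mathcal B(z,w')$ and $\mathcal B(z,-w')$. If it were singular at $\pm z^{1/2}$, those points would be double poles of one of the two terms. That would break Lemma~\ref{lem:residue-first}, which uses that at $w=\pm z^{1/2}$ the pole is carried by $\mathcal Z_{\A}(\pm w')$ alone. There the factor $\mathcal Z_{\A}(q(w')^2z)$ takes the finite value $\mathcal Z_{\A}(z/q)=(1-z)^{-1}$.

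Replace the third bullet by the correct computation, and drop the corresponding claim in your ``main obstacle'' paragraph; the argument is then complete.
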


\begin{proof}
On $|z|=q^{-3/2}$ the substitution $w'=w/(qz^{1/2})$ gives
$|w'|=q^{-1/4}|w|$, and for $|w|<1$ the pair $(z,w')$ satisfies the five
conditions of \eqref{eq:D-region}, the binding one being $|w'|^2<q|z|$,
which reads $|w|<1$; these conditions involve $w'$ only through $|w'|$, so
they hold for $-w'$ as well, hence for both terms of
$\mathcal B^{\bullet}(z,w')$. By Lemma~\ref{lem:B}(2) the right-hand side of
\eqref{eq:B-continuation} therefore represents $\mathcal B(z,\pm w')$ on
$|w|<1$ as a meromorphic function whose only poles are those of the explicit
zeta factors. As the remaining factors of $K_{N,\bullet}$ are rational in
$w$, it follows that $K_{N,\bullet}(z,\cdot)$ is meromorphic on $|w|<1$.

It remains to locate its poles. Inserting \eqref{eq:B-continuation} into
$\mathcal B^{\bullet}$ and undoing the substitution $w=qz^{1/2}w'$ displays
every $w$-dependent factor of $K_{N,\bullet}$ at once: with
$\varrho\in\{\pm1\}$ indexing the two terms of $\mathcal B^{\bullet}$,
$$
K_{N,\bullet}(z,w)
=
\frac{z^{-\nu_\bullet/2}\,\mathcal Z_{\A}(z)
\left(1-\frac{w^2}{qz}\right)}
{2\,(1-w^2)(1-w)(1-q^{-1/2}w)\,w^{N+1}}
\sum_{\varrho\in\{\pm1\}}
\frac{\varrho^{\nu_\bullet}
\prod_{P\in\mathcal P}\mathcal D_P(z,\varrho w')}
{\left(1-\frac{\varrho w}{z^{1/2}}\right)
\left(1-\frac{\varrho w}{q^2z^{3/2}}\right)}.
$$
Here the four zeta factors of \eqref{eq:B-continuation} that involve $w$
have been evaluated as follows, the last column recording the moduli on
$|z|=q^{-3/2}$:
\begin{center}
\renewcommand{\arraystretch}{1.4}
\begin{tabular}{c c c c}
factor of \eqref{eq:B-continuation} & in the variable $w$ & singular at &
$|w|$\\
\hline
$\mathcal Z_{\A}(\varrho w')$
& $\left(1-\dfrac{\varrho w}{z^{1/2}}\right)^{-1}$
& $w=\varrho z^{1/2}$
& $q^{-3/4}$\\

$\mathcal Z_{\A}\!\left(\dfrac{\varrho w'}{q^2z}\right)$
& $\left(1-\dfrac{\varrho w}{q^2z^{3/2}}\right)^{-1}$
& $w=\varrho q^2z^{3/2}$
& $q^{-1/4}$\\

$\mathcal Z_{\A}\bigl(q(w')^2z\bigr)$
& $\left(1-w^2\right)^{-1}$
& $w=\pm1$
& $1$\\

$\mathcal Z_{\A}\bigl((w')^2\bigr)^{-1}$
& $1-\dfrac{w^2}{qz}$
& nowhere
& \\
\end{tabular}
\end{center}
The Euler product $\prod_P\mathcal D_P(z,\varrho w')$ is holomorphic on
$|w|<1$ by Lemma~\ref{lem:B}(2), and the remaining factors $(1-w)^{-1}$,
$(1-q^{-1/2}w)^{-1}$ and $w^{-(N+1)}$ of $K_{N,\bullet}$ are singular only
at $w=1$, $w=q^{1/2}$ and $w=0$, respectively. Of all these singularities
those at $w=\pm1$ and $w=q^{1/2}$ have modulus $\ge1$, so the only poles in
$|w|<1$ are $w=0$, $w=\pm z^{1/2}$ and $w=\pm q^{2}z^{3/2}$.
\end{proof}

\subsubsection{Shifting the $w$-contour}

We now enlarge the circle $|w|=r$ in \eqref{eq:Htilde-perron} to
$|w|=q^{-\beta}$: fix $\varepsilon>0$ and
$0<\beta<\min\{1/4,\varepsilon/2\}$, so that
$r<q^{-3/4}<q^{-1/4}<q^{-\beta}<1$. By Lemma~\ref{lem:K-poles} the
enlargement therefore crosses exactly the four nonzero poles
$w=\pm z^{1/2}$ and $w=\pm q^{2}z^{3/2}$, while $w=0$ is enclosed by both
contours.

Nothing depends on the choice of $z^{1/2}$: replacing it by $-z^{1/2}$
replaces $w'$ by $-w'$, which leaves $\mathcal B^{\mathrm e}$ unchanged and
changes the sign of both $z^{-1/2}$ and $\mathcal B^{\mathrm o}$, so
$K_{N,\bullet}$ is unchanged; and the two crossed pairs
$\{\pm z^{1/2}\}$ and $\{\pm q^2z^{3/2}\}$ are unchanged as sets, so the
residue sums over them are unchanged as well.

By Lemma~\ref{lem:K-poles} and the residue theorem, for
$\bullet\in\{\mathrm e,\mathrm o\}$ and $|z|=q^{-3/2}$,
\begin{equation}\label{eq:w-residue-thm}
\begin{aligned}
\frac{1}{2\pi i}\oint_{|w|=r}K_{N,\bullet}(z,w)\,dw
&=
\frac{1}{2\pi i}\oint_{|w|=q^{-\beta}}K_{N,\bullet}(z,w)\,dw\\
&\quad-
\sum_{\omega\in\{\pm z^{1/2}\}}
\operatorname{Res}_{w=\omega}K_{N,\bullet}(z,w)
-
\sum_{\omega\in\{\pm q^2z^{3/2}\}}
\operatorname{Res}_{w=\omega}K_{N,\bullet}(z,w).
\end{aligned}
\end{equation}
Substituting \eqref{eq:w-residue-thm} into
\eqref{eq:Htilde-perron} and the result into \eqref{eq:SN-square-fullC},
we obtain
$$
\mathcal S_{N,\bullet}^{*}(V=\square)
=
A_{N,\bullet}^{*}
+
B_{N,\bullet}^{*}
+
C_{N,\bullet}^{*}
+
O_\varepsilon\!\left(q^{g(1+\varepsilon)/2}\right),
$$
where
\begin{align}
A_{N,\bullet}^{*}
&:=
-\frac{1}{2\pi i}
\oint_{|z|=q^{-3/2}}
\Theta_\bullet(z)
\sum_{\omega\in\{\pm z^{1/2}\}}
\operatorname{Res}_{w=\omega}
K_{N,\bullet}(z,w)\,dz,
\label{eq:A-N}\\
B_{N,\bullet}^{*}
&:=
-\frac{1}{2\pi i}
\oint_{|z|=q^{-3/2}}
\Theta_\bullet(z)
\sum_{\omega\in\{\pm q^2z^{3/2}\}}
\operatorname{Res}_{w=\omega}
K_{N,\bullet}(z,w)\,dz,
\label{eq:B-N}\\
C_{N,\bullet}^{*}
&:=
\frac{1}{(2\pi i)^2}
\oint_{|z|=q^{-3/2}}\oint_{|w|=q^{-\beta}}
\Theta_\bullet(z)K_{N,\bullet}(z,w)\,dw\,dz.
\label{eq:C-N}
\end{align}
Thus $A_{N,\bullet}^{*}$ and $B_{N,\bullet}^{*}$ collect the residues at
$w=\pm z^{1/2}$ and at $w=\pm q^2z^{3/2}$, respectively, while
$C_{N,\bullet}^{*}$ is the integral over the enlarged contour; the minus
signs in \eqref{eq:A-N} and \eqref{eq:B-N} come from those in
\eqref{eq:w-residue-thm}.

The residue terms $A_{N,\bullet}^{*}$ and $B_{N,\bullet}^{*}$ are evaluated
in \S\ref{sec:principal-square} and \S\ref{sec:secondary-poles},
respectively. The enlarged-contour terms are error terms
(cf. \cite[(3.22)]{Ju20} and \cite[(5.8)~and~(5.12)]{AM-21}); we record the
verification because $K_{N,\bullet}$ here carries the additional factors
$(1-q^{-1/2}w)^{-1}$ and $(1-w)^{-1}$. On $|z|=q^{-3/2}$ and
$|w|=q^{-\beta}$ the four zeta factors of \eqref{eq:B-continuation} that
occur with positive power have arguments of moduli
$$
|z|=q^{-3/2},
\qquad
|\pm w'|=q^{-1/4-\beta},
\qquad
\bigl|q(w')^2z\bigr|=q^{-1-2\beta},
\qquad
\left|\frac{\pm w'}{q^2z}\right|=q^{-3/4-\beta},
$$
none of which equals $q^{-1}$ when $0<\beta<1/4$; with the locally uniform
convergence of $\prod_P\mathcal D_P$ on \eqref{eq:D-region} this gives
$\mathcal B^{\mathrm e},\mathcal B^{\mathrm o}\ll_\beta1$ there. The
remaining factors of $K_{N,\bullet}$ are bounded as well, $|1-w|$ and
$|1-q^{-1/2}w|$ being bounded away from zero on $|w|=q^{-\beta}$ while
$|w^{-(N+1)}|=q^{\beta(N+1)}$ and $|z^{-1/2}|=q^{3/4}$, so that
$K_{N,\bullet}(z,w)\ll_\beta q^{\beta(N+1)}$. On $|z|=q^{-3/2}$ one has
$\Theta_\bullet(z)\ll_q q^{2g+2}|z|^g$, the factors $(qz-1)$ and
$(1-z)^{\nu_\bullet-1}$ being $\ll_q1$ there; estimating the
$z$-integral in \eqref{eq:C-N} trivially, with $|z^g|=q^{-3g/2}$ and contour
length $\ll q^{-3/2}$, we obtain
$$
C_{N,\bullet}^{*}
\ll_\beta
q^{2g+2}\cdot q^{-3g/2}\cdot q^{-3/2}\cdot q^{\beta(N+1)}
\ll_\beta
q^{g/2+\beta N}.
$$
Since $N\le g$ and $\beta<\varepsilon/2$, it follows that
$C_{N,\bullet}^{*}=O_\varepsilon\bigl(q^{g(1+\varepsilon)/2}\bigr)$.
Consequently,
\begin{equation}\label{eq:SN-square-sector-AB}
\mathcal S_{N,\bullet}^{*}(V=\square)
=
A_{N,\bullet}^{*}
+
B_{N,\bullet}^{*}
+
O_\varepsilon\!\left(q^{g(1+\varepsilon)/2}\right).
\end{equation}
Summing \eqref{eq:SN-square-sector-AB} over the two sectors, we obtain
\begin{equation}\label{eq:SN-square-AB}
\begin{aligned}
\mathcal S_{N,\mathrm e}^{*}(V=\square)
+
\mathcal S_{N,\mathrm o}^{*}(V=\square)
=
A_{N,\mathrm e}^{*}
+
A_{N,\mathrm o}^{*}
+
B_{N,\mathrm e}^{*}
+
B_{N,\mathrm o}^{*}
+
O_\varepsilon\!\left(q^{g(1+\varepsilon)/2}\right).
\end{aligned}
\end{equation}

\subsubsection{Principal square-dual residues and parity matching}
\label{sec:principal-square}

This subsection evaluates $A_{N,\bullet}^{*}$, the contribution of the pole
family $w=\pm z^{1/2}$. The outcome, Proposition~\ref{prop:A-total}, is that
its total over $N\in\{g,g-1\}$ and over the two parity sectors equals
$-\mathcal R_0(g;\rho)$, and so cancels the contour integral retained in
\S\ref{sec:main-term}.

\begin{lemma}[Residues at $w=\pm z^{1/2}$]\label{lem:residue-first}
For $|z|=q^{-3/2}$, one has
\begin{equation}\label{eq:first-residue}
\sum_{\omega\in\{\pm z^{1/2}\}}
\operatorname{Res}_{w=\omega}K_{N,\bullet}(z,w)
=
-\frac{\mathcal Z_{\A}(z)\mathcal Z_{\A}(z/q)}
{2z^{(N+\nu_\bullet)/2}}
R_N^{\bullet}(z)
\prod_{P\in\mathcal P}\mathcal B_P(z,q^{-1}),
\end{equation}
where
$$
R_N^{\bullet}(z)
:=
\frac{1}
{(1-q^{-1/2}z^{1/2})(1-z^{1/2})}
+
\frac{(-1)^{N+\nu_\bullet}}
{(1+q^{-1/2}z^{1/2})(1+z^{1/2})}.
$$
\end{lemma}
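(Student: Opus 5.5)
The plan is to compute the two residues directly from the explicit factorized form of $K_{N,\bullet}(z,w)$ obtained in the proof of Lemma~\ref{lem:K-poles}. That form is a sum over $\varrho\in\{\pm1\}$. The first step is to observe that at $w=\omega=\varrho_0z^{1/2}$ only the term with $\varrho=\varrho_0$ is singular. Indeed, the factor $\bigl(1-\varrho w/z^{1/2}\bigr)^{-1}$ has its pole at $w=\varrho z^{1/2}$. The other factors are holomorphic and nonzero at $w=\pm z^{1/2}$, since $\pm z^{1/2}$ has modulus $q^{-3/4}$, different from $q^{-1/4}$, $1$, $q^{1/2}$ and $0$. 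Moreover, $\prod_P\mathcal D_P$ is holomorphic on $|w|<1$ by Lemma~\ref{lem:B}(2). Each pole is therefore simple. The residue of $\bigl(1-\varrho w/z^{1/2}\bigr)^{-1}$ at $w=\varrho z^{1/2}$ is $-\varrho z^{1/2}$, and the residue of $K_{N,\bullet}$ is this number times the remaining factors evaluated at $w=\varrho z^{1/2}$.

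Next I evaluate the remaining factors at $w=\varrho z^{1/2}$, where $\varrho w'=\varrho w/(qz^{1/2})=q^{-1}$ independently of $\varrho$:
\begin{itemize}
\item $1-w^2/(qz)=1-q^{-1}=\mathcal Z_\A(q^{-2})^{-1}=\mathcal Z_\A((w')^2)^{-1}$;
\item $(1-w^2)^{-1}=(1-z)^{-1}=\mathcal Z_\A(z/q)$;
\item $\bigl(1-\varrho w/(q^2z^{3/2})\bigr)^{-1}=\bigl(1-1/(q^2z)\bigr)^{-1}=\mathcal Z_\A\bigl(q^{-1}/(q^2z)\bigr)$;
\item $\prod_P\mathcal D_P(z,\varrho w')=\prod_P\mathcal D_P(z,q^{-1})$.
\end{itemize}
By the relation between $\prod_P\mathcal B_P$ and $\prod_P\mathcal D_P$ in the proof of Lemma~\ref{lem:B}(2), specialized at $w'=q^{-1}$, the product
$\mathcal Z_\A(q^{-1}/(q^2z))\,\mathcal Z_\A(q^{-2})^{-1}\prod_P\mathcal D_P(z,q^{-1})$
equals $\prod_P\mathcal B_P(z,q^{-1})$. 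For this identity to hold by continuation, I will check that $(z,q^{-1})$ lies in the region \eqref{eq:D-region} when $|z|=q^{-3/2}$. This is the point needing care: the conditions $|w'|^2<q|z|$ and $|w'|<q^3|z|^2$ read $q^{-2}<q^{-1/2}$ and $q^{-1}<1$, and both hold. The identity is then valid by analytic continuation from the initial domain of Lemma~\ref{lem:B}(1).

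Finally I collect the scalar factors. The factor $\varrho^{\nu_\bullet}$ from $\mathcal B^\bullet$, the residue $-\varrho z^{1/2}$, and $w^{-(N+1)}=\varrho^{-(N+1)}z^{-(N+1)/2}$ combine to $-\varrho^{N+\nu_\bullet}z^{-N/2}$, using $\varrho^{-1}=\varrho$. Together with the prefactor $z^{-\nu_\bullet/2}\mathcal Z_\A(z)/2$ this gives $-\mathcal Z_\A(z)\mathcal Z_\A(z/q)/(2z^{(N+\nu_\bullet)/2})$, multiplied by $\varrho^{N+\nu_\bullet}\bigl((1-\varrho z^{1/2})(1-q^{-1/2}\varrho z^{1/2})\bigr)^{-1}$. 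Summing over $\varrho=\pm1$ produces exactly $R_N^\bullet(z)$ and hence \eqref{eq:first-residue}.

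The only steps that are not mechanical are the sign bookkeeping in $\varrho$ and the legitimacy of evaluating $\prod_P\mathcal D_P$ at $w'=q^{-1}$ inside \eqref{eq:D-region}. Neither should cause difficulty.
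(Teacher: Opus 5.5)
Your proposal is correct and follows essentially the paper's argument. The pole at $w=\varrho z^{1/2}$ comes only from $\mathcal Z_{\A}(\varrho w')$, whose residue $-\varrho z^{1/2}$ combines with $\varrho^{\nu_\bullet}$ and $w^{-(N+1)}$ to give the sign $\varrho^{N+\nu_\bullet}$, and summing over $\varrho$ produces $R_N^{\bullet}(z)$. The one difference is that the paper reads the residue off \eqref{eq:B-factorization} directly, because \eqref{eq:B-region} already holds at $|z|=q^{-3/2}$, $|w'|=q^{-1}$; so $\prod_P\mathcal B_P(z,q^{-1})$ appears at once, without your detour through $\prod_P\mathcal D_P$ and the recombination of the extra zeta factors.
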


\begin{proof}
Write $w'=w/(qz^{1/2})$, so that $w=\pm z^{1/2}$ corresponds to
$w'=\pm q^{-1}$. For $|z|=q^{-3/2}$ and $|w'|=q^{-1}$, the three conditions
in \eqref{eq:B-region} hold, so \eqref{eq:B-factorization} is available near
$w=\pm z^{1/2}$ and $\prod_P\mathcal B_P(z,w')$ is holomorphic there: the
pole is carried by $\mathcal Z_{\A}(w')$ alone. At $w=z^{1/2}$ this factor
is $\mathcal Z_{\A}(w/(qz^{1/2}))=(1-w/z^{1/2})^{-1}$, a simple pole of
residue $-z^{1/2}$, while the other three factors of
\eqref{eq:B-factorization}, namely $\mathcal Z_{\A}(z)$,
$\mathcal Z_{\A}(q(w')^2z)$ and $\prod_P\mathcal B_P(z,w')$, are evaluated
at $w'=q^{-1}$, where $q(w')^2z=z/q$. Hence
$$
\operatorname{Res}_{w=z^{1/2}}\mathcal B\!\left(z,\frac{w}{qz^{1/2}}\right)
=-z^{1/2}\,\Pi(z),
\qquad\text{where}\quad
\Pi(z):=\mathcal Z_{\A}(z)\mathcal Z_{\A}(z/q)
\prod_{P\in\mathcal P}\mathcal B_P(z,q^{-1}).
$$
At $w=-z^{1/2}$ the same computation applies to
$\mathcal B(z,-w/(qz^{1/2}))$: the singular factor is
$\mathcal Z_{\A}(-w/(qz^{1/2}))=(1+w/z^{1/2})^{-1}$, of residue
$+z^{1/2}$, whence
$\operatorname{Res}_{w=-z^{1/2}}\mathcal B(z,-w/(qz^{1/2}))=+z^{1/2}\Pi(z)$.

At each of the two points $w=\pm z^{1/2}$ only one of the two terms of
$\mathcal B^{\bullet}$ is singular, so
$$
\operatorname{Res}_{w=z^{1/2}}
\mathcal B^{\bullet}\!\left(z,\frac{w}{qz^{1/2}}\right)
=-\tfrac12z^{1/2}\Pi(z),
\qquad
\operatorname{Res}_{w=-z^{1/2}}
\mathcal B^{\bullet}\!\left(z,\frac{w}{qz^{1/2}}\right)
=(-1)^{\nu_\bullet}\tfrac12z^{1/2}\Pi(z),
$$
the factor $\tfrac12$ and the sign $(-1)^{\nu_\bullet}$ coming from the
definition of $\mathcal B^{\bullet}$. Since $(1-q^{-1/2}w)(1-w)w^{N+1}$ is
holomorphic and nonzero at $w=\pm z^{1/2}$, the definition of
$K_{N,\bullet}$ gives
$$
\operatorname{Res}_{w=\omega}K_{N,\bullet}(z,w)
=
\frac{z^{-\nu_\bullet/2}}
{(1-q^{-1/2}\omega)(1-\omega)\,\omega^{N+1}}
\operatorname{Res}_{w=\omega}
\mathcal B^{\bullet}\!\left(z,\frac{w}{qz^{1/2}}\right)
\qquad(\omega\in\{\pm z^{1/2}\}).
$$
Summing the two, and using
$\bigl(-z^{1/2}\bigr)^{-(N+1)}=(-1)^{N+1}z^{-(N+1)/2}$, we obtain
$$
\sum_{\omega\in\{\pm z^{1/2}\}}\operatorname{Res}_{w=\omega}K_{N,\bullet}(z,w)
=-\frac{z^{-\nu_\bullet/2}\Pi(z)}{2z^{N/2}}
\left\{\frac{1}{(1-q^{-1/2}z^{1/2})(1-z^{1/2})}
+\frac{(-1)^{N+\nu_\bullet}}{(1+q^{-1/2}z^{1/2})(1+z^{1/2})}\right\}.
$$
Here the sign $(-1)^{N+\nu_\bullet}$ originates in $w^{-(N+1)}$ at the
negative pole. The sum in braces is $R_N^{\bullet}(z)$, and
$z^{\nu_\bullet/2}z^{N/2}=z^{(N+\nu_\bullet)/2}$; this proves
\eqref{eq:first-residue}.
\end{proof}

We shall pass from $z$ to the variable $u$ with $z=u^{-2}$, in which
$\mathcal R_0(g;\rho)$ is defined in \eqref{eq:R0}. If $R>0$ and $G$ is
holomorphic in a neighborhood of $|z|=R^2$, then substituting $z=s^2$, the
circle $|s|=R$ covering $|z|=R^2$ twice, and then $s=u^{-1}$, which reverses
the orientation and so cancels the sign in $ds=-u^{-2}\,du$, gives
\begin{equation}\label{eq:z-to-u}
\frac{1}{2\pi i}\oint_{|z|=R^2}G(z)\,dz
=
\frac{1}{2\pi i}\oint_{|s|=R}G(s^2)s\,ds
=
\frac{1}{2\pi i}\oint_{|u|=R^{-1}}u^{-3}G(u^{-2})\,du.
\end{equation}

Substituting the residue formula of Lemma~\ref{lem:residue-first} into
\eqref{eq:A-N}, so that the minus signs of \eqref{eq:A-N} and
\eqref{eq:first-residue} cancel, and using $(qz-1)\mathcal Z_{\A}(z)=-1$ and
$\mathcal Z_{\A}(z/q)=(1-z)^{-1}$, which together give
$\Theta_\bullet(z)\mathcal Z_{\A}(z)\mathcal Z_{\A}(z/q)
=-c_\bullet z^{g}(1-z)^{\nu_\bullet-2}$, we obtain
$$
A_{N,\bullet}^{*}
=
-\frac{c_\bullet}{2}
\frac{1}{2\pi i}
\oint_{|z|=q^{-3/2}}
\frac{z^{g-\frac{N+\nu_\bullet}{2}}}{(1-z)^{2-\nu_\bullet}}
R_N^{\bullet}(z)
\prod_{P\in\mathcal P}\mathcal B_P(z,q^{-1})\,dz.
$$
We now convert this into an integral in the variable $u$: we apply
\eqref{eq:z-to-u} with $R=q^{-3/4}$, taking $z=u^{-2}$ and
$z^{1/2}=u^{-1}$; the new contour is $|u|=q^{3/4}$, which lies in the
annulus $1<|u|<q$ where \eqref{eq:B-principal} is available. Under this
substitution
$$
R_N^{\bullet}(u^{-2})
=
u^{2}
\left(
\frac{1}{(u-q^{-1/2})(u-1)}
+
\frac{(-1)^{N+\nu_\bullet}}{(u+q^{-1/2})(u+1)}
\right).
$$
Using \eqref{eq:B-principal} we obtain
\begin{equation}\label{eq:A-N-u}
A_{N,\bullet}^{*}
=
-\frac{q\,c_\bullet}{2\zA(2)}
\frac{1}{2\pi i}
\oint_{|u|=q^{3/4}}
\frac{u^{N-2g+1-\nu_\bullet}\,\mathcal C^*(u^2)}
{(u^2-1)^{1-\nu_\bullet}(q-u^2)}
\left(
\frac{1}{(u-q^{-1/2})(u-1)}
+
\frac{(-1)^{N+\nu_\bullet}}
{(u+q^{-1/2})(u+1)}
\right)du.
\end{equation}

\begin{proposition}
\label{prop:A-total}
For every $1<\rho<q$, one has
\begin{equation}\label{eq:A-total}
\sum_{N\in\{g,g-1\}}
\left(
A_{N,\mathrm e}^{*}
+
A_{N,\mathrm o}^{*}
\right)
=
-\mathcal R_0(g;\rho),
\end{equation}
with $\mathcal R_0(g;\rho)$ as in \eqref{eq:R0}.
\end{proposition}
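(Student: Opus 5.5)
The plan is to write the four contributions $A_{N,\bullet}^{*}$ ($N\in\{g,g-1\}$, $\bullet\in\{\mathrm e,\mathrm o\}$) as a single contour integral, using \eqref{eq:A-N-u}, and to identify its integrand with the integrand of $\mathcal R_0(g;\rho)$ in \eqref{eq:R0} up to sign. This should come from an exact rational-function identity in $u$, not from any estimate.

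\emph{Step 1: common contour.} By \eqref{eq:C-over-qv}, each integrand in \eqref{eq:A-N-u} is $\mathcal K(u^2)/q$ times a rational function of $u$. The only possible poles of that rational function are at $u=0,\pm1,\pm q^{-1/2}$, all in $|u|\le1$. So each integrand is holomorphic on the annulus $1<|u|<q$, and for any $1<\rho<q$ we may move the contour $|u|=q^{3/4}$ to $|u|=\rho$. The same holds for the integrands of $\mathcal R_0(g;\rho)$ by \eqref{eq:R0-integrand}. It therefore suffices to prove an identity between the total integrand of the left side of \eqref{eq:A-total} and minus the total integrand of \eqref{eq:R0}. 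We need this only as an equality of the sums over $N$, not term by term.

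\emph{Step 2: reindex $N$.} In \eqref{eq:A-N-u} the power is $u^{N-2g+1-\nu_\bullet}$. Under the involution $N\mapsto N':=2g-1-N$ of $\{g,g-1\}$ it becomes $u^{-N'-\nu_\bullet}$, which is close to the factor $u^{-N'-1}$ in \eqref{eq:R0}; the sign $(-1)^{N+\nu_\bullet}$ becomes $-(-1)^{N'+\nu_\bullet}$. Next we insert $c_{\mathrm e}=q^{2g+2}/\zA(2)$ and $c_{\mathrm o}=q^{2g+3/2}$, and pull out the common factor $q^{2g+2}\mathcal C^*(u^2)/\zA(2)$. For each $N'$, the two sectors then contribute a rational function, which we put over the common denominator $u^{N'+1}(u^2-1)(q-u^2)(u^2-q^{-1})$. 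The constant $\zA(2)^{-1}=1-q^{-1}$ in the even sector, against $q^{-1/2}$ in the odd one, is what should make the combination work.

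\emph{Step 3: the identity (main obstacle).} For each fixed $N'$, the combined numerator should reduce to
\[
-\frac{(q-u^2)(u^2-q^{-1})}{(1-q^{-1/2}u)(1-u)^2(1+u)}\cdot(\text{power of }u)
\]
times the $\mathcal R_0$ normalisation. Any leftover terms must be odd under $u\mapsto-u$ together with the sign flip $(-1)^{N'}$, so that they cancel between $N'=g$ and $N'=g-1$, as the $(-1)^N$ residue did in the proof of Proposition~\ref{prop:principal}. I would first compute the even-sector bracket
\[
\frac{1}{(u-q^{-1/2})(u-1)}\pm\frac{1}{(u+q^{-1/2})(u+1)}
\]
explicitly in both parities, and the odd-sector bracket likewise. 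I would then check that the factor $q-u^2$ cancels, which it must, since the $\mathcal R_0$ integrand contains no such factor. The leading-order bookkeeping of powers of $q$ and the overall sign would be checked by comparing the coefficient of $u^{-N'-1}$ at large $|u|$. If the identity fails for each $N'$ separately, I would instead sum over $N'$ first and use $(-1)^g+(-1)^{g-1}=0$ to remove the parity-dependent parts before simplifying.
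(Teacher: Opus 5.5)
Your Steps~1 and~2 agree with the paper: move all six contours to $|u|=\rho$ using \eqref{eq:C-over-qv} and \eqref{eq:R0-integrand}, then combine. Step~3, however, aims at an identity that is false. The integrands do not cancel, neither for each $N$ separately nor after summing over $N\in\{g,g-1\}$. Adding the four integrands of \eqref{eq:A-N-u} to the two of \eqref{eq:R0} leaves the remainder $q^{2g+2}\mathcal C^*(u^2)\,u^{-g-1}\mathcal E_g(u)$, where
\[
\mathcal E_g(u)=\frac{(q-1)\,u\,(u^2+1+2\sqrt q)}{\sqrt q\,(q-u^2)(u^2-1)^2}\quad(g\text{ even}),
\qquad
\mathcal E_g(u)=\frac{(q-1)\bigl(\sqrt q(1+u^2)+2u^2\bigr)}{\sqrt q\,(q-u^2)(u^2-1)^2}\quad(g\text{ odd}).
\]
This is not identically zero. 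The factor $q-u^2$ does not cancel, contrary to your ``which it must''. Your fallback of using $(-1)^g+(-1)^{g-1}=0$ is already built into this sum over $N$. So the computation you propose would end at a nonzero rational function, and no pointwise cancellation of ``leftover terms'' between $N=g$ and $N=g-1$ occurs.

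The missing idea is that the remainder vanishes only after integration, by symmetry under $u\mapsto-u$. Here $\mathcal C^*(u^2)$ is even, while $u^{-g-1}$ and $\mathcal E_g(u)$ are both odd when $g$ is even and both even when $g$ is odd. So the remainder is an even function. It is holomorphic on $1<|u|<q$ because $\mathcal C^*(u^2)/(q-u^2)=\mathcal K(u^2)/q$. An even function on an annulus has only even powers in its Laurent expansion, so its $u^{-1}$ coefficient, and hence its integral over $|u|=\rho$, is zero. Equivalently, the substitution $u\mapsto-u$ gives $\oint_{|u|=\rho}f(u)\,du=-\oint_{|u|=\rho}f(-u)\,du$, so only the odd part of an integrand contributes. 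What you must verify is that the \emph{odd parts} of the two total integrands agree, not the integrands themselves.
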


\begin{proof}
By \eqref{eq:C-over-qv} with $v=u^2$, the integrand in \eqref{eq:A-N-u} is
holomorphic in the annulus $1<|u|<q$; its contour may therefore be moved to
$|u|=\rho$ for any $1<\rho<q$. We do so for each of the four pairs
$(N,\bullet)$ with $N\in\{g,g-1\}$ and $\bullet\in\{\mathrm e,\mathrm o\}$,
and add the two integrals in the definition \eqref{eq:R0} of
$\mathcal R_0(g;\rho)$. Since $(-1)^{N+\nu_\bullet}$ equals $(-1)^g$ for
$(g,\mathrm e)$ and $(g-1,\mathrm o)$ and $-(-1)^g$ for the other two, and
using $\zA(2)^{-1}=(q-1)/q$, the resulting six integrands --- four from
\eqref{eq:A-N-u} and two from \eqref{eq:R0} --- may be placed over a common
denominator. Factoring out $q^{2g+2}\mathcal C^*(u^2)u^{-g-1}$ and
treating the two parities of $g$ separately, a direct computation gives
\begin{align}\label{eq:A-total-difference}
\sum_{N\in\{g,g-1\}}\left(A_{N,\mathrm e}^{*}+A_{N,\mathrm o}^{*}\right)
+\mathcal R_0(g;\rho)
=\frac{q^{2g+2}}{2\pi i}\oint_{|u|=\rho}\frac{\mathcal C^*(u^2)}{u^{g+1}}\mathcal E_g(u)\,du,
\end{align}
where
$$
\mathcal E_g(u)
:=
\begin{cases}
\displaystyle
\frac{(q-1)u(u^2+1+2\sqrt q)}
{\sqrt q\,(q-u^2)(u^2-1)^2},
& g\ {\rm even},\\[8pt]
\displaystyle
\frac{(q-1)(\sqrt q(1+u^2)+2u^2)}
{\sqrt q\,(q-u^2)(u^2-1)^2},
& g\ {\rm odd}.
\end{cases}
$$

The integrand on the right of \eqref{eq:A-total-difference} is holomorphic
on $1<|u|<q$, since $(u^2-1)^{-2}$ is singular only on $|u|=1$ and
$\mathcal C^*(u^2)/(q-u^2)$ is holomorphic there by \eqref{eq:C-over-qv};
and it is an even function of
$u$, because $\mathcal C^*(u^2)$ is even while $u^{-g-1}$ and
$\mathcal E_g(u)$ are both odd when $g$ is even and both even when $g$ is
odd. The Laurent coefficients of an even function on an annulus satisfy
$c_n=(-1)^nc_n$, so $c_{-1}=0$ and the integral vanishes; thus
\eqref{eq:A-total-difference} yields \eqref{eq:A-total}.
\end{proof}

\subsubsection{Secondary square-dual poles}\label{sec:secondary-poles}

This subsection evaluates $B_{N,\bullet}^{*}$, the contribution of the pole
family $w=\pm q^2z^{3/2}$. The outcome, Proposition~\ref{prop:secondary-contour},
is that its total over $N\in\{g,g-1\}$ and over the two parity sectors
equals, up to an admissible error, the sum $\mathcal S_0(g)$ of the residues
at the three cube roots of $q^{-4}$ crossed when the $z$-contour is pushed
out; all three are retained here, whereas the earlier treatments keep only
the real one (see Remark~\ref{rem:comparison}).

\begin{lemma}[Residues at $w=\pm q^2z^{3/2}$]
For $|z|=q^{-3/2}$, one has
\begin{equation}\label{eq:second-residue}
\sum_{\omega\in\{\pm q^2z^{3/2}\}}
\operatorname{Res}_{w=\omega}K_{N,\bullet}(z,w)
=
-\frac{
\mathcal Z_{\A}(z)
\mathcal Z_{\A}(qz)
\mathcal Z_{\A}(q^3z^3)}
{2q^{2N}z^{(3N+\nu_\bullet)/2}\mathcal Z_{\A}(q^2z^2)}
Q_N^{\bullet}(z)
\prod_{P\in\mathcal P}\mathcal D_P(z,qz),
\end{equation}
where
\begin{equation}\label{eq:QN-def}
Q_N^{\bullet}(z)
:=
\frac{1}
{(1-q^{3/2}z^{3/2})(1-q^2z^{3/2})}
+
\frac{(-1)^{N+\nu_\bullet}}
{(1+q^{3/2}z^{3/2})(1+q^2z^{3/2})}.
\end{equation}
\end{lemma}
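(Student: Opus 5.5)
The plan is to read the residues directly off the explicit factorization of $K_{N,\bullet}$ displayed in the proof of Lemma~\ref{lem:K-poles}, which comes from \eqref{eq:B-continuation}. This mirrors the proof of Lemma~\ref{lem:residue-first}. The difference is that here the singular factor is $\mathcal Z_{\A}(\varrho w'/(q^2z))$ rather than $\mathcal Z_{\A}(\varrho w')$, so the continued form \eqref{eq:B-continuation} must be used instead of \eqref{eq:B-factorization}.

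First I will check that the continued representation is available near the poles. At $w=\varrho q^2z^{3/2}$ one has $w'=w/(qz^{1/2})=\varrho qz$, so the term indexed by $\varrho$ evaluates $\mathcal D_P$ at $(z,qz)$. For $|z|=q^{-3/2}$ and $|w'|=q^{-1/2}$, the five conditions of \eqref{eq:D-region} hold:
\begin{itemize}
\item $|w'|^2=q^{-1}<q|z|=q^{-1/2}$;
\item $|w'|=q^{-1/2}<q^3|z|^2=1$;
\item $|w'z|=q^{-2}<q^{-1}$;
\item $|z|>q^{-2}$ and $|w'|<1$ trivially.
\end{itemize}
Hence $\prod_P\mathcal D_P(z,\pm w')$ is holomorphic near these points. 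Next I will confirm that each pole is simple and carried by a single term. At $w=\varrho q^2z^{3/2}$, of modulus $q^{-1/4}$, the factor $\bigl(1-\varrho' w/(q^2z^{3/2})\bigr)^{-1}$ is singular only for $\varrho'=\varrho$. All other factors are finite and nonzero there: $(1-\varrho' w/z^{1/2})^{-1}$, $(1-w^2)^{-1}$, $(1-w)^{-1}$, $(1-q^{-1/2}w)^{-1}$ and $w^{-N-1}$. This uses that $|w|=q^{-1/4}$ differs from $q^{-3/4}$, $1$ and $q^{1/2}$.

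Then I compute the residue. The factor $\bigl(1-\varrho w/(q^2z^{3/2})\bigr)^{-1}$ has residue $-\varrho q^2z^{3/2}$ at $w=\varrho q^2z^{3/2}$. The remaining zeta factors of \eqref{eq:B-continuation} evaluate as follows:
\begin{itemize}
\item $\mathcal Z_{\A}(z)$ is unchanged;
\item $\mathcal Z_{\A}(\varrho w')=\mathcal Z_{\A}(qz)$;
\item $\mathcal Z_{\A}(q(w')^2z)=\mathcal Z_{\A}(q^3z^3)$;
\item $\mathcal Z_{\A}((w')^2)^{-1}=\mathcal Z_{\A}(q^2z^2)^{-1}$.
\end{itemize}
This is best done in the $\mathcal B(z,\varrho w')$ form rather than the already-rewritten $w$-form, to avoid bookkeeping slips. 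The outer factors evaluate as
$$
(1-q^{-1/2}w)(1-w)\,w^{N+1}
=\bigl(1-\varrho q^{3/2}z^{3/2}\bigr)\bigl(1-\varrho q^2z^{3/2}\bigr)\,\varrho^{N+1}q^{2N+2}z^{3(N+1)/2}.
$$
The prefactors are $\tfrac12\varrho^{\nu_\bullet}$ from $\mathcal B^\bullet$ and $z^{-\nu_\bullet/2}$. Multiplying everything gives
$$
-\frac{\varrho^{N+\nu_\bullet}}{2q^{2N}z^{(3N+\nu_\bullet)/2}\bigl(1-\varrho q^{3/2}z^{3/2}\bigr)\bigl(1-\varrho q^2z^{3/2}\bigr)}
$$
times the $\varrho$-independent factor
$$
\frac{\mathcal Z_{\A}(z)\mathcal Z_{\A}(qz)\mathcal Z_{\A}(q^3z^3)}{\mathcal Z_{\A}(q^2z^2)}\prod_P\mathcal D_P(z,qz).
$$
Here $\varrho^{2}=1$ was used to cancel the $\varrho$ from the residue against one power in $\varrho^{N+1}$. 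Summing over $\varrho=\pm1$ then produces exactly $Q_N^\bullet(z)$ of \eqref{eq:QN-def}, which gives \eqref{eq:second-residue}.

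The main obstacle is purely sign and power bookkeeping. The points to track are the sign $\varrho^{N+\nu_\bullet}$, the power $q^{2N}$ against $q^{2N+2}$ (the extra $q^2$ is cancelled by the residue), and the consistent choice of $z^{1/2}$ with $z^{3/2}=zz^{1/2}$. As noted earlier in the paper, the pair $\{\pm q^2z^{3/2}\}$ and the total are independent of that choice.
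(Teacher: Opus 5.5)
Your proposal is correct and follows essentially the same route as the paper. The paper also isolates the simple pole of $\mathcal Z_{\A}(w'/(q^2z))$ in \eqref{eq:B-continuation} at $w'=\pm qz$, checks that $(z,qz)$ lies in \eqref{eq:D-region}, evaluates the remaining zeta factors at $z$, $qz$, $q^3z^3$ and $q^2z^2$, and then repeats the bookkeeping of Lemma~\ref{lem:residue-first}; your uniform per-$\varrho$ computation, with $\varrho^{\nu_\bullet}\cdot\varrho\cdot\varrho^{-(N+1)}=\varrho^{N+\nu_\bullet}$, is just a compact form of the paper's separate treatment of $w=a$ and $w=-a$ via $(-a)^{-(N+1)}=(-1)^{N+1}a^{-(N+1)}$.
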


\begin{proof}
Set $a:=q^2z^{3/2}$, the value of $w$ at which $w'=w/(qz^{1/2})$ equals
$qz$. In \eqref{eq:B-continuation} the factor $\mathcal Z_{\A}(w'/(q^2z))$
becomes $(1-w/a)^{-1}$, a simple pole at $w=a$ of residue $-a$. On
$|z|=q^{-3/2}$ the pair $(z,qz)$ satisfies the five conditions of
\eqref{eq:D-region}. At $w=a$, that is $w'=qz$, the three remaining zeta
factors $\mathcal Z_{\A}(z)$, $\mathcal Z_{\A}(w')$ and
$\mathcal Z_{\A}(q(w')^2z)$ of \eqref{eq:B-continuation} have the arguments
$z$, $qz$ and $q^3z^3$, none of which equals $q^{-1}$; they are therefore
holomorphic there, and their value, together with
$\mathcal Z_{\A}((w')^2)^{-1}=\mathcal Z_{\A}(q^2z^2)^{-1}$ and
$\prod_P\mathcal D_P(z,qz)$, is
$$
\mathscr F(z):=\frac{\mathcal Z_{\A}(z)\mathcal Z_{\A}(qz)
\mathcal Z_{\A}(q^3z^3)}{\mathcal Z_{\A}(q^2z^2)}
\prod_{P\in\mathcal P}\mathcal D_P(z,qz).
$$
Hence $\operatorname{Res}_{w=a}\mathcal B(z,w/(qz^{1/2}))=-a\mathscr F(z)$,
and replacing $w$ by $-w$, which reverses the sign of the residue, gives
$\operatorname{Res}_{w=-a}\mathcal B(z,-w/(qz^{1/2}))=a\mathscr F(z)$.

The rest is the computation in the proof of Lemma~\ref{lem:residue-first},
with $z^{1/2}$ replaced by $a$ and $\Pi(z)$ by $\mathscr F(z)$: at each of
$w=\pm a$ only one of the two terms of $\mathcal B^{\bullet}$ is singular,
so its residue there is one half of the corresponding value above, that at
$w=-a$ carrying the extra factor $(-1)^{\nu_\bullet}$ from the definition of
$\mathcal B^{\bullet}$; and since $(1-q^{-1/2}w)(1-w)w^{N+1}$ is holomorphic
and nonzero at $w=\pm a$,
$$
\operatorname{Res}_{w=\omega}K_{N,\bullet}(z,w)
=
\frac{z^{-\nu_\bullet/2}}
{(1-q^{-1/2}\omega)(1-\omega)\,\omega^{N+1}}
\operatorname{Res}_{w=\omega}
\mathcal B^{\bullet}\!\left(z,\frac{w}{qz^{1/2}}\right)
\qquad(\omega\in\{\pm a\}).
$$
Summing the two, and using $(-a)^{-(N+1)}=(-1)^{N+1}a^{-(N+1)}$, we obtain
$$
\sum_{\omega\in\{\pm a\}}\operatorname{Res}_{w=\omega}K_{N,\bullet}(z,w)
=-\frac{z^{-\nu_\bullet/2}\mathscr F(z)}{2a^N}
\left\{\frac{1}{(1-q^{-1/2}a)(1-a)}
+\frac{(-1)^{N+\nu_\bullet}}{(1+q^{-1/2}a)(1+a)}\right\}.
$$
Since $q^{-1/2}a=q^{3/2}z^{3/2}$ the sum in braces is $Q_N^{\bullet}(z)$, and
$z^{\nu_\bullet/2}a^N=q^{2N}z^{(3N+\nu_\bullet)/2}$; this proves
\eqref{eq:second-residue}.
\end{proof}

The factor $(qz-1)$ carried by $\Theta_\bullet$ in \eqref{eq:B-N} combines
with the four zeta factors of \eqref{eq:second-residue}:
\begin{equation}\label{eq:zeta-secondary}
(qz-1)\mathcal Z_{\A}(z)
\frac{\mathcal Z_{\A}(qz)\mathcal Z_{\A}(q^3z^3)}
{\mathcal Z_{\A}(q^2z^2)}
=
-\frac{1-q^3z^2}
{(1-q^2z)(1-q^4z^3)}.
\end{equation}
Substituting \eqref{eq:second-residue} into \eqref{eq:B-N}, so that the
minus signs of \eqref{eq:B-N} and \eqref{eq:second-residue} cancel, and
using \eqref{eq:zeta-secondary}, we obtain
\begin{equation}\label{eq:B-N-explicit}
B_{N,\bullet}^{*}
=
-\frac{c_\bullet}{2q^{2N}}
\frac{1}{2\pi i}
\oint_{|z|=q^{-3/2}}
J_{N,\bullet}(z)\,dz,
\end{equation}
where
\begin{equation}\label{eq:J-N-def}
J_{N,\bullet}(z)
:=
z^{g-\frac{3N+\nu_\bullet}{2}}
\frac{(1-q^3z^2)\prod_{P\in\mathcal P}\mathcal D_P(z,qz)}
{(1-z)^{1-\nu_\bullet}(1-q^2z)(1-q^4z^3)}
Q_N^{\bullet}(z).
\end{equation}

Write $A:=q^{3/2}z^{3/2}$ and $B:=q^2z^{3/2}$, so that $A^2=q^3z^3$,
$B^2=q^4z^3$ and $AB=q^{7/2}z^3$. Putting the two fractions of
\eqref{eq:QN-def} over the common denominator $(1-A^2)(1-B^2)$ gives
\begin{align}
\frac{1}{(1-A)(1-B)}+\frac{1}{(1+A)(1+B)}
&=\frac{2(1+q^{7/2}z^3)}{(1-q^3z^3)(1-q^4z^3)},
\label{eq:secondary-plus}\\
\frac{1}{(1-A)(1-B)}-\frac{1}{(1+A)(1+B)}
&=\frac{2(q^{3/2}+q^2)z^{3/2}}{(1-q^3z^3)(1-q^4z^3)}.
\label{eq:secondary-minus}
\end{align}
Thus $Q_N^{\bullet}(z)$ is given by \eqref{eq:secondary-plus} when
$N+\nu_\bullet$ is even and by \eqref{eq:secondary-minus} when it is odd.
In either case the half-integral powers cancel against
$z^{g-(3N+\nu_\bullet)/2}$ in \eqref{eq:J-N-def}, because
$3N+\nu_\bullet\equiv N+\nu_\bullet\bmod 2$; thus $J_{N,\bullet}(z)$ is a
single-valued function of $z$.

For $q^{-2}<|z|<q^{-1}$, the substitution $w'=qz$ satisfies all the
conditions in \eqref{eq:D-region}, so
$\prod_{P\in\mathcal P}\mathcal D_P(z,qz)$ is holomorphic throughout this
annulus. After substituting \eqref{eq:secondary-plus} and
\eqref{eq:secondary-minus} into \eqref{eq:J-N-def}, the denominator of
$J_{N,\bullet}$ is a product of factors among $1-z$, $1-q^2z$, $1-q^3z^3$
and $(1-q^4z^3)^{2}$, the last squared because $(1-q^4z^3)^{-1}$ occurs both
in \eqref{eq:J-N-def} and in
\eqref{eq:secondary-plus}--\eqref{eq:secondary-minus}. The first three
vanish only at $z=1$, at $z=q^{-2}$ and on $|z|=q^{-1}$, none of which lies
in the open annulus, whereas the zeros of $1-q^4z^3$ are
$z_k:=q^{-4/3}e^{2\pi ik/3}$ ($k\in\{0,1,2\}$) with
$q^{-2}<|z_k|=q^{-4/3}<q^{-1}$. Thus $z_0$, $z_1$, $z_2$ are the only poles
of $J_{N,\bullet}$ in $q^{-2}<|z|<q^{-1}$, and each has order at most two,
the zeros of $1-q^4z^3$ being simple.

Define the total residue contribution at $z_0$, $z_1$, $z_2$ by
\begin{equation}\label{eq:S0-secondary}
\mathcal S_0(g)
:=
\sum_{N\in\{g,g-1\}}
\ \sum_{\bullet\in\{\mathrm e,\mathrm o\}}
\ \sum_{k=0}^2
\frac{c_\bullet}{2q^{2N}}
\operatorname{Res}_{z=z_k}J_{N,\bullet}(z).
\end{equation}

\begin{proposition}
\label{prop:secondary-contour}
For every $\varepsilon>0$, one has
\begin{equation}\label{eq:B-secondary-total}
\sum_{N\in\{g,g-1\}}
\left(
B_{N,\mathrm e}^{*}
+
B_{N,\mathrm o}^{*}
\right)
=
\mathcal S_0(g)
+
O_\varepsilon\!\left(q^{g(1+\varepsilon)/2}\right).
\end{equation}
\end{proposition}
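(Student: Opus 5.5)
The plan is to start from \eqref{eq:B-N-explicit}, which writes $B_{N,\bullet}^{*}$ as $-c_\bullet(2q^{2N})^{-1}$ times the integral of $J_{N,\bullet}$ over $|z|=q^{-3/2}$. In that integral I will push the $z$-contour outward to $|z|=q^{-1-\kappa}$, with $0<\kappa<\varepsilon$ small. The preceding discussion shows that $J_{N,\bullet}$ is single-valued and meromorphic on the annulus $q^{-2}<|z|<q^{-1}$, and that its only poles there are $z_0,z_1,z_2$, each with $|z_k|=q^{-4/3}$. Since $q^{-3/2}<q^{-4/3}<q^{-1-\kappa}$ once $\kappa<1/3$, moving the contour outward crosses exactly these three poles. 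The residue theorem then gives
\[
\frac{1}{2\pi i}\oint_{|z|=q^{-3/2}}J_{N,\bullet}\,dz
=\frac{1}{2\pi i}\oint_{|z|=q^{-1-\kappa}}J_{N,\bullet}\,dz-\sum_{k=0}^{2}\operatorname{Res}_{z=z_k}J_{N,\bullet}.
\]
Multiplying by $-c_\bullet/(2q^{2N})$ turns the residue terms into $+\frac{c_\bullet}{2q^{2N}}\sum_k\operatorname{Res}_{z=z_k}J_{N,\bullet}$. Summing over $N\in\{g,g-1\}$ and $\bullet\in\{\mathrm e,\mathrm o\}$, these are exactly the terms defining $\mathcal S_0(g)$ in \eqref{eq:S0-secondary}. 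What remains to prove is that each shifted integral, multiplied by $c_\bullet q^{-2N}$, is $O_\varepsilon(q^{g(1+\varepsilon)/2})$.

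For this error bound I will estimate $J_{N,\bullet}$ trivially on $|z|=q^{-1-\kappa}$.

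\textbf{The Euler product.} First I need $\prod_P\mathcal D_P(z,qz)\ll_\kappa 1$ there. The pair $(z,w')=(z,qz)$ must lie in a compact subset of \eqref{eq:D-region}, and I check each condition directly:
\begin{itemize}
\item $|w'|^2=q^2|z|^2<q|z|$ holds because $|z|<q^{-1}$;
\item $|w'|=q|z|<q^3|z|^2$ holds because $|z|>q^{-2}$;
\item $|w'|<1$ is clear;
\item $|w'z|=q|z|^2<q^{-1}$ holds.
\end{itemize}
At $|z|=q^{-1-\kappa}$ all of these are strict with margins depending only on $\kappa$. Local uniform convergence on \eqref{eq:D-region} then gives the bound.

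\textbf{The rational factors.} The factors $1-q^3z^2$, $(1-z)^{\nu_\bullet-1}$, $(1-q^2z)^{-1}$ and $(1-q^4z^3)^{-1}$ are all $\ll_\kappa1$ on this circle. Here $|q^4z^3|=q^{1-3\kappa}>1$, so the last one is actually small. The factor $Q_N^\bullet(z)$ needs more care: by \eqref{eq:secondary-plus}--\eqref{eq:secondary-minus}, its denominators are $1-q^3z^3$ and $1-q^4z^3$. On the circle, $|q^3z^3|=q^{-3\kappa}$ is bounded away from $1$ for fixed $\kappa>0$, and $|q^4z^3|>1$. Hence $Q_N^\bullet(z)$ is bounded, with size at most $|z|^{3/2}q^2/|q^4z^3|^{\cdot}$, in particular $\ll_\kappa 1$ up to powers of $q$.

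\textbf{The power of $z$.} The dominant factor is $|z|^{g-(3N+\nu_\bullet)/2}=q^{(1+\kappa)((3N+\nu_\bullet)/2-g)}$, and combining it with the prefactor should give the claimed size. Since $c_\bullet\asymp q^{2g}$, the total is
\[
\ll q^{2g-2N+(1+\kappa)(3N/2-g)+O(1)}
= q^{g-N/2+\kappa(3N/2-g)+O(1)}.
\]
For $N\in\{g,g-1\}$ the exponent is $g/2+\kappa g/2+O(1)$. So each shifted integral is $\ll_\kappa q^{g(1+\kappa)/2}$, which is $O_\varepsilon(q^{g(1+\varepsilon)/2})$ because $\kappa<\varepsilon$. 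Adding the four pairs $(N,\bullet)$ then gives \eqref{eq:B-secondary-total}.

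\textbf{The main obstacle.} I expect the delicate step to be justifying the contour shift and the boundedness near the outer circle. The concern is that $1-q^3z^3$ vanishes on $|z|=q^{-1}$ and that the Euler product only converges for $|z|<q^{-1}$. Both are handled by stopping at $|z|=q^{-1-\kappa}$ rather than at $|z|=q^{-1}$, at the cost of the $q^{\kappa g/2}$ loss, which is absorbed into $\varepsilon$.
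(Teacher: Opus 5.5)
Your proposal is correct and follows the paper's proof essentially step for step. You shift from $|z|=q^{-3/2}$ to $|z|=q^{-1-\kappa}$ with $\kappa<\min\{1/3,\varepsilon\}$, and the residues at $z_0,z_1,z_2$ (with the sign from \eqref{eq:B-N-explicit}) give exactly $\mathcal S_0(g)$. You then bound the outer integral trivially, using the exponent $g-N/2+\kappa(3N/2-g)+O(1)$, the boundedness of $\prod_P\mathcal D_P(z,qz)$ on a compact subset of \eqref{eq:D-region}, and the boundedness of the rational factors.

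The only blemish is your garbled size estimate for $Q_N^\bullet$. It is harmless, since on that circle $|1-q^3z^3|\ge 1-q^{-3\kappa}$ and $|1-q^4z^3|\ge q^{1-3\kappa}-1$ are both bounded away from zero.
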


\begin{proof}
Fix $\varepsilon>0$ and $0<\delta<\min\{1/3,\varepsilon\}$, and move the
contours in \eqref{eq:B-N-explicit} from $|z|=q^{-3/2}$ to
$|z|=q^{-1-\delta}$. Since
$q^{-3/2}<|z_k|=q^{-4/3}<q^{-1-\delta}<q^{-1}$, the only singularities of
$J_{N,\bullet}$ that this shift can cross are $z=z_k$ ($k\in\{0,1,2\}$); so by the
residue theorem and \eqref{eq:S0-secondary},
$$
\sum_{N\in\{g,g-1\}}\bigl(B_{N,\mathrm e}^{*}+B_{N,\mathrm o}^{*}\bigr)
=\mathcal S_0(g)
-\sum_{N\in\{g,g-1\}}\ \sum_{\bullet\in\{\mathrm e,\mathrm o\}}
\frac{c_\bullet}{2q^{2N}}\frac{1}{2\pi i}
\oint_{|z|=q^{-1-\delta}}J_{N,\bullet}(z)\,dz.
$$
Write $N=g-j$ with $j\in\{0,1\}$. On $|z|=q^{-1-\delta}$ the prefactor of
\eqref{eq:B-N-explicit}, combined with the explicit power of $z$ in
\eqref{eq:J-N-def}, is
$$
c_\bullet q^{-2N}|z|^{g-\frac{3N+\nu_\bullet}{2}}
=q^{(1+\delta)g/2+2+\delta\nu_\bullet/2+(1-3\delta)j/2},
$$
which for $0<\delta<1/3$, $j\in\{0,1\}$ and $\nu_\bullet\in\{0,1\}$ is
$\ll_\delta q^{(1+\delta)g/2}$. The circle $|z|=q^{-1-\delta}$ is
compactly contained in $q^{-2}<|z|<q^{-1}$, so
$\prod_{P}\mathcal D_P(z,qz)\ll_\delta1$ there by Lemma~\ref{lem:B}, the
remaining rational factors are holomorphic in a neighbourhood of the circle
by \eqref{eq:secondary-plus} and \eqref{eq:secondary-minus}, and the length
of the circle does not depend on $g$. The outer integrals therefore
contribute $O_\delta(q^{(1+\delta)g/2})$, and $\delta<\varepsilon$
gives \eqref{eq:B-secondary-total}.
\end{proof}

\subsubsection{Explicit evaluation of the secondary term}

This subsection evaluates the residue sum $\mathcal S_0(g)$ defined in
\eqref{eq:S0-secondary} and proves the explicit periodic formula for it
promised in Proposition~\ref{prop:square-dual-main}. Throughout we work in
the square-root coordinate
$z=s^2$,
which replaces the half-integral powers $z^{3/2}$ in $Q_N^{\bullet}(z)$ by
the ordinary powers $s^3$.

For each $k\in\{0,1,2\}$, choose the canonical lift
$s_k:=q^{-2/3}e^{-2\pi i k/3}$ of $z_k$, so that $s_k^2=z_k$ and
$s_k^3=q^{-2}$. The two lifts of $z_k$ are then $\varrho s_k$ with
$\varrho\in\{\pm1\}$, and $(\varrho s_k)^2=z_k$,
$(\varrho s_k)^3=\varrho q^{-2}$. This choice is used throughout the
subsection.

In the coordinate $s$ the two terms of \eqref{eq:QN-def} are indexed
uniformly by a sign $\eta\in\{\pm1\}$:
\begin{equation}\label{eq:QN-s}
Q_N^{\bullet}(s^2)=\sum_{\eta\in\{\pm1\}}
\frac{\eta^{N+\nu_\bullet}}{(1-\eta q^{3/2}s^3)(1-\eta q^2s^3)}.
\end{equation}

Since $z_k\ne0$, the map $s\mapsto s^2$ is locally biholomorphic at each of
the two lifts $\pm s_k$ of $z_k$, so for $F$ meromorphic near $z_k$ the change-of-variables formula for
residues gives
$\operatorname{Res}_{s=\varrho s_k}\bigl(2sF(s^2)\bigr)=\operatorname{Res}_{z=z_k}F(z)$
for either $\varrho$. Averaging over both lifts,
\begin{equation}\label{eq:residue-lift}
\operatorname{Res}_{z=z_k}F(z)
=\frac12\sum_{\varrho\in\{\pm1\}}
\operatorname{Res}_{s=\varrho s_k}\bigl(2sF(s^2)\bigr),
\end{equation}
a symmetric form that puts $\pm s_0$, $\pm s_1$, $\pm s_2$ on an equal
footing; it is what makes the residues at $s_k$ and at $-s_k$ agree in
Lemma~\ref{lem:mirror-cancellation-even}.

We now express $J_{N,\bullet}$, which appears in \eqref{eq:S0-secondary}, in
the square-root coordinate. From \eqref{eq:J-N-def} and \eqref{eq:QN-s},
\begin{equation}\label{eq:J-lifted}
2sJ_{N,\bullet}(s^2)
=
2s^{2g-3N+1-\nu_\bullet}
\frac{(1-q^3s^4)\prod_{P\in\mathcal P}\mathcal D_P(s^2,qs^2)}
{(1-s^2)^{1-\nu_\bullet}(1-q^2s^2)(1-q^4s^6)}
\sum_{\eta\in\{\pm1\}}
\frac{\eta^{N+\nu_\bullet}}
{(1-\eta q^{3/2}s^3)(1-\eta q^2s^3)}.
\end{equation}

We next determine which $\eta$-summand of \eqref{eq:J-lifted} can be
singular at $s=\varrho s_k$. Fix $k\in\{0,1,2\}$ and $\varrho\in\{\pm1\}$;
the sign $\eta$ continues to index the two summands in
\eqref{eq:J-lifted}, whereas $\varrho$ specifies one of the two lifts of
$z_k$.

The term of \eqref{eq:J-lifted} indexed by $\eta$ has, in the even sector,
denominator
\begin{equation}\label{eq:lifted-denominator}
(1-s^2)(1-q^2s^2)(1-q^4s^6)(1-\eta q^{3/2}s^3)(1-\eta q^2s^3),
\end{equation}
and in the odd sector the same denominator without the factor $1-s^2$.
Since $(\varrho s_k)^2=z_k$ and $(\varrho s_k)^3=\varrho q^{-2}$, among the
four factors of \eqref{eq:lifted-denominator} other than $1-q^4s^6$ only
$1-\eta q^2s^3$ can vanish at $s=\varrho s_k$, and it does so exactly when
$\eta=\varrho$. The remaining
factor $1-q^4s^6$ is common to the two $\eta$-terms and has a simple zero at
$\varrho s_k$. Thus for $\eta=\varrho$ two of the five factors in
\eqref{eq:lifted-denominator} vanish and the term has a pole of order at most
two, whereas for $\eta=-\varrho$ only $1-q^4s^6$ vanishes. That remaining
simple pole is spurious: adding the contributions of $N=g$ and $N=g-1$
produces a factor $1+\eta q^2s^3$, and
$(1+\eta q^2s^3)/(1-q^4s^6)=(1-\eta q^2s^3)^{-1}$, which equals $\tfrac12$ at
$s=\varrho s_k$ when $\eta=-\varrho$. This is carried out in
Lemma~\ref{lem:mirror-cancellation-even}; accordingly, for
$\eta\in\{\pm1\}$ put
\begin{equation}\label{eq:T-secondary}
T_\eta(s)
:=
\eta^g s^{-g}
\frac{q(1-q^3s^4)\prod_{P\in\mathcal P}\mathcal D_P(s^2,qs^2)}
{2(1-q^2s^2)(1-\eta q^{3/2}s^3)
(1-\eta q^2s^3)^2}
\left(
\frac{(q-1)s}{1-s^2}+\sqrt q\,\eta
\right).
\end{equation}
Since $\prod_{P}\mathcal D_P(s^2,qs^2)$ depends only on $s^2$, replacing $s$
by $-s$ and $\eta$ by $-\eta$ in \eqref{eq:T-secondary} gives
\begin{equation}\label{eq:T-mirror}
T_{-\eta}(-s)=-T_\eta(s).
\end{equation}

\begin{lemma}
\label{lem:mirror-cancellation-even}
One has
\begin{equation}\label{eq:S0-three-residues}
\mathcal S_0(g)
=
2\sum_{k=0}^{2}
\operatorname{Res}_{s=s_k}T_{+1}(s).
\end{equation}
\end{lemma}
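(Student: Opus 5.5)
The plan is to start from the definition \eqref{eq:S0-secondary}, lift each residue at $z_k$ to the square-root coordinate via \eqref{eq:residue-lift}, and then sum the lifted integrands over $N\in\{g,g-1\}$ and $\bullet\in\{\mathrm e,\mathrm o\}$ before computing any residue. By \eqref{eq:residue-lift},
\[
\mathcal S_0(g)=\sum_{k=0}^2\frac12\sum_{\varrho\in\{\pm1\}}\operatorname{Res}_{s=\varrho s_k}\Phi(s),
\qquad
\Phi(s):=\sum_{N\in\{g,g-1\}}\sum_{\bullet}\frac{c_\bullet}{2q^{2N}}\,2sJ_{N,\bullet}(s^2).
\]
Using \eqref{eq:J-lifted}, I will regroup $\Phi$ by the sign $\eta$ as $\Phi=\sum_{\eta}\Phi_\eta$. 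Each $\Phi_\eta$ collects four terms, with $N=g-j$ ($j\in\{0,1\}$) and $\nu_\bullet\in\{0,1\}$. These terms carry the common factor $s^{2g+1}\eta^{g}$ times $(1-q^3s^4)\prod_P\mathcal D_P(s^2,qs^2)\big/\bigl((1-q^2s^2)(1-q^4s^6)(1-\eta q^{3/2}s^3)(1-\eta q^2s^3)\bigr)$, and they are weighted by
\[
\frac{c_\bullet}{q^{2N}}\,s^{-3N-\nu_\bullet}\,\eta^{-j+\nu_\bullet}(1-s^2)^{\nu_\bullet-1}.
\]

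The key algebraic step is to carry out this four-term sum explicitly with $c_{\mathrm e}=q^{2g+1}(q-1)$ and $c_{\mathrm o}=q^{2g+3/2}$. The $j$-sum should produce the factor $1+\eta q^2s^3$, since $q^{-2(g-1)}s^{-3(g-1)}\eta^{-1}=q^{-2g}s^{-3g}\cdot\eta q^2s^3$. The $\nu$-sum should produce $\frac{(q-1)s}{1-s^2}+\sqrt q\,\eta$, up to normalizing powers of $q$ and $s$. Then $(1+\eta q^2s^3)/(1-q^4s^6)=(1-\eta q^2s^3)^{-1}$ gives exactly $\Phi_\eta=2T_\eta$, with $T_\eta$ as in \eqref{eq:T-secondary}. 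I expect this bookkeeping to be the main obstacle. It involves verifying the exponent $s^{-g}$, the constant $q/2$, and the sign $\eta^g$, and then checking that the spurious simple pole of the $\eta=-\varrho$ term at $\varrho s_k$ is indeed removed, as described before \eqref{eq:T-secondary}. I will do this check first, by comparing powers of $s$ and $q$ term by term.

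Once $\Phi=2(T_{+1}+T_{-1})$ is established, I will locate the poles. At $s=\varrho s_k$, the function $T_\eta$ is holomorphic unless $\eta=\varrho$, because $1-\eta q^2s^3$ vanishes only when $\eta=\varrho$ and the other denominator factors are nonzero there. So $\operatorname{Res}_{s=\varrho s_k}\Phi=2\operatorname{Res}_{s=\varrho s_k}T_\varrho$. For $\varrho=-1$, the mirror identity \eqref{eq:T-mirror} gives $T_{-1}(s)=-T_{+1}(-s)$. Under $s\mapsto -s$ a residue changes sign, so $\operatorname{Res}_{s=-s_k}T_{-1}(s)=\operatorname{Res}_{s=s_k}T_{+1}(s)$. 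The two lifts therefore contribute equally, and averaging over $\varrho$ with the factor $\tfrac12$ yields $\mathcal S_0(g)=2\sum_k\operatorname{Res}_{s=s_k}T_{+1}(s)$, which is \eqref{eq:S0-three-residues}.
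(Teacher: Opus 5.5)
Your proposal is correct and follows the paper's proof essentially step for step. You lift via \eqref{eq:residue-lift} and combine the four $(N,\bullet)$ terms at fixed $\eta$ into \eqref{eq:two-truncations-combined}, then cancel $1+\eta q^2s^3$ against $1-q^4s^6$ to obtain $T_\eta$. You keep only $T_\varrho$ at $\varrho s_k$, since the $T_{-\varrho}$ term is holomorphic there because $1+q^2 s^3\varrho\cdot\varrho^{-1}$, i.e.\ $1-\eta q^2s^3$ with $\eta=-\varrho$, equals $2$ at that point. Finally you use \eqref{eq:T-mirror} to identify the two lifts.

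The bookkeeping you flag does work out. With $c_{\mathrm e}=q^{2g+1}(q-1)$ and $c_{\mathrm o}=q^{2g+3/2}$, the four terms sum to exactly $\eta^g s^{-g}\tfrac q2\bigl(1+\eta q^2s^3\bigr)\bigl(\tfrac{(q-1)s}{1-s^2}+\sqrt q\,\eta\bigr)$, so $\Phi_\eta=2T_\eta$ as you claim.
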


\begin{proof}
Applying \eqref{eq:residue-lift} to the residues in \eqref{eq:S0-secondary}
and substituting \eqref{eq:J-lifted}, the
factor $\tfrac12$ of \eqref{eq:residue-lift} cancelling the leading $2$
there, we obtain by linearity of the residue
\begin{align}
\mathcal S_0(g)
={}&\sum_{k=0}^2\sum_{\varrho\in\{\pm1\}}\sum_{\eta\in\{\pm1\}}
\operatorname{Res}_{s=\varrho s_k}
\Biggl[\frac{(1-q^3s^4)\prod_{P\in\mathcal P}\mathcal D_P(s^2,qs^2)}
{(1-q^2s^2)(1-q^4s^6)(1-\eta q^{3/2}s^3)(1-\eta q^2s^3)}
\nonumber\\
&\hspace{8em}\times
\sum_{N\in\{g,g-1\}}\ \sum_{\bullet\in\{\mathrm e,\mathrm o\}}
\frac{c_\bullet}{2q^{2N}}
\frac{\eta^{N+\nu_\bullet}s^{2g-3N+1-\nu_\bullet}}
{(1-s^2)^{1-\nu_\bullet}}\Biggr].
\label{eq:S0-lifted-branches}
\end{align}
Fix $\eta$ and evaluate the double sum over $N\in\{g,g-1\}$ and
$\bullet\in\{\mathrm e,\mathrm o\}$ in \eqref{eq:S0-lifted-branches} term by
term; using $c_{\mathrm e}=q^{2g+2}/\zA(2)=q^{2g+1}(q-1)$,
$c_{\mathrm o}=q^{2g+3/2}$ and $\eta^2=1$, it equals
\begin{equation}\label{eq:two-truncations-combined}
\eta^gs^{-g}\frac{q}{2}\bigl(1+\eta q^2s^3\bigr)
\left(\frac{(q-1)s}{1-s^2}+\sqrt q\,\eta\right).
\end{equation}
Substituting \eqref{eq:two-truncations-combined} into
\eqref{eq:S0-lifted-branches} and cancelling the factor $1+\eta q^2s^3$
against $1-q^4s^6=(1-\eta q^2s^3)(1+\eta q^2s^3)$, we obtain
\begin{equation}\label{eq:S0-all-T-branches}
\mathcal S_0(g)=\sum_{k=0}^2\sum_{\varrho\in\{\pm1\}}\sum_{\eta\in\{\pm1\}}
\operatorname{Res}_{s=\varrho s_k}T_\eta(s),
\end{equation}
with $T_\eta$ as in \eqref{eq:T-secondary}.  Fix $k$ and $\varrho$.  Among
the denominator factors of $T_\eta$ only $1-\eta q^2s^3$ can vanish at
$s=\varrho s_k$, and it does so exactly when $\eta=\varrho$; hence no
denominator factor of $T_{-\varrho}$ is zero there, and the Euler product is
holomorphic there by Lemma~\ref{lem:B}, so $T_{-\varrho}$ contributes no
residue, whereas $T_\varrho$ carries the double-pole factor
$(1-\varrho q^2s^3)^{-2}$.  Thus
\eqref{eq:S0-all-T-branches} reduces to
$$
\mathcal S_0(g)
=
\sum_{k=0}^{2}
\left\{
\operatorname{Res}_{s=s_k}T_{+1}(s)
+
\operatorname{Res}_{s=-s_k}T_{-1}(s)
\right\}.
$$
Finally, the substitution $s\mapsto-s$ reverses the sign of a residue, and
$T_{-1}(-s)=-T_{+1}(s)$ by \eqref{eq:T-mirror}, so
$\operatorname{Res}_{s=-s_k}T_{-1}=\operatorname{Res}_{s=s_k}T_{+1}$;
substituting this into the display above gives
\eqref{eq:S0-three-residues}.
\end{proof}

We now evaluate the representative residues in
\eqref{eq:S0-three-residues}. Fix $k\in\{0,1,2\}$. Since $s_k^3=q^{-2}$,
one has $q^2z_k=q^2s_k^2=s_k^{-1}$.
Consequently,
\begin{equation}\label{eq:secondary-basic-identities}
s_k^{-g}=(q^2z_k)^g,
\qquad
1-q^{3/2}s_k^3=1-q^{-1/2}.
\end{equation}

Write $\mathscr D(z):=\prod_{P\in\mathcal P}\mathcal D_P(z,qz)$. The
residue calculation requires $\mathscr D'$ at $z_0$, $z_1$ and $z_2$, so we
record that $\mathscr D$ is holomorphic there. Each $z_k$ has
$|z_k|=q^{-4/3}$ and so lies in the annulus $q^{-2}<|z|<q^{-1}$, on which
$w'=qz$ was seen in \S\ref{sec:secondary-poles} to satisfy all five
conditions of \eqref{eq:D-region}; by Lemma~\ref{lem:B} the product
converges absolutely and uniformly on compact subsets of that annulus, and
$\mathscr D$ is therefore holomorphic near each of $z_0$, $z_1$ and $z_2$.

For $z^3=q^{-4}$, that is, for $z\in\{z_0,z_1,z_2\}$, define
\begin{equation}\label{eq:a-secondary}
\mathfrak a(z)
:=
-\frac{
(1-q^3z^2)
\bigl[q(q-1)z^2+q^{-1/2}(1-z)\bigr]}
{18z(1-q^2z)(1-z)}\,\mathscr D(z)
\end{equation}
and
\begin{align}
\mathfrak b(z)
:={}&
\left[
2+\frac{4q^3z^2}{1-q^3z^2}
-\frac{2q^2z}{1-q^2z}
-\frac{3}{\sqrt q-1}
\right]\mathfrak a(z)
+
\frac{
q(q-1)z(1+z)(1-q^3z^2)}
{18(1-q^2z)(1-z)^2}\,\mathscr D(z)
\nonumber\\
&+
\frac{
(1-q^3z^2)
\bigl[q(q-1)z^2+q^{-1/2}(1-z)\bigr]}
{9(1-q^2z)(1-z)}\,\mathscr D'(z).
\label{eq:b-secondary}
\end{align}
None of the denominator factors in \eqref{eq:a-secondary} and
\eqref{eq:b-secondary} vanishes at $z_0$, $z_1$ or $z_2$, so both
expressions are well defined there.

By \eqref{eq:secondary-basic-identities} the factor $1-q^{3/2}s^3$ of
\eqref{eq:T-secondary} takes the same nonzero value $1-q^{-1/2}$ at $s_0$,
$s_1$, $s_2$; the next lemma clears it by evaluating
$(1-q^{-1/2})\operatorname{Res}_{s=s_k}T_{+1}(s)$.

\begin{lemma}[Residue at a lifted secondary point]
\label{lem:secondary-residue}
For every $k\in\{0,1,2\}$, one has
\begin{equation}\label{eq:secondary-residue-evaluation}
(1-q^{-1/2})
\operatorname{Res}_{s=s_k}T_{+1}(s)
=
(q^2z_k)^g
\left(
\mathfrak a(z_k)g+\mathfrak b(z_k)
\right).
\end{equation}
\end{lemma}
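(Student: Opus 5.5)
The residue is a double-pole computation at $s=s_k$, coming from the factor $(1-q^2s^3)^{-2}$ in $T_{+1}$ of \eqref{eq:T-secondary}. The natural plan is to factor out the singular part, expand the remaining holomorphic part to first order, and convert everything back to the variable $z=s^2$, where $\mathfrak a$ and $\mathfrak b$ are stated.

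First I would write
\[
1-q^2s^3=-q^2\prod_{j}(s-\omega_j),\qquad \omega_j=s_0e^{-2\pi ij/3},
\]
so that near $s_k$
\[
(1-q^2s^3)^2=q^4(s-s_k)^2\Phi_k(s)^2,\qquad \Phi_k(s_k)=3s_k^2.
\]
This uses $\prod_{j\ne k}(s_k-\omega_j)=3s_k^2$, obtained by differentiating $s^3-s_0^3$. Then
\[
T_{+1}(s)=\frac{G(s)}{(s-s_k)^2},\qquad G(s)=\frac{s^{-g}F(s)}{q^4\Phi_k(s)^2},
\]
where $F$ collects the regular factors: $q(1-q^3s^4)\mathscr D(s^2)$ over $2(1-q^2s^2)(1-q^{3/2}s^3)$, times $\bigl((q-1)s/(1-s^2)+\sqrt q\bigr)$. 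The residue is then $G'(s_k)$. I would compute it logarithmically:
\[
G'(s_k)=G(s_k)\left[-\frac{g}{s_k}+\frac{F'}{F}(s_k)-2\frac{\Phi_k'}{\Phi_k}(s_k)\right].
\]
The term linear in $g$ is $-g\,s_k^{-1}G(s_k)$. Using $s_k^{-g}=(q^2z_k)^g$ and $1-q^{3/2}s_k^3=1-q^{-1/2}$ from \eqref{eq:secondary-basic-identities}, multiplying by $(1-q^{-1/2})$ removes that factor. Simplifying with $q^4\cdot 9s_k^4=9q^4s_k^4$ and $s_k^3=q^{-2}$ (so $9q^4s_k^5=9q^2s_k^2=9q^2z_k$, and similar relations) should reproduce $\mathfrak a(z_k)$. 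Note that
\[
\sqrt q+\frac{(q-1)s}{1-s^2}=\frac{\sqrt q(1-z)+(q-1)s}{1-z},
\]
and the half-integer power $s=z^{1/2}$ has to be cleared using $s=s_k^3\,q^2 s^{-2}\cdots$. Concretely, $s_k=q^2s_k^4=q^2z_k^2$, which turns $(q-1)s$ into $q^2(q-1)z^2$. This explains the bracket $q(q-1)z^2+q^{-1/2}(1-z)$ after pulling out a factor $q^{3/2}$. The constant $18=2\cdot 9$ arises from the $2$ in $T$ and $\Phi_k(s_k)^2=9s_k^4$.

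For the $g$-free part, I would differentiate $\log F$ term by term in $s$ and convert each term using $d/ds=2s\,d/dz$:
\begin{itemize}
\item $1-q^3s^4$ gives $4q^3z^2/(1-q^3z^2)$ after multiplying by $s$;
\item $1-q^2s^2$ gives $2q^2z/(1-q^2z)$;
\item $1-q^{3/2}s^3$ gives $3q^{3/2}s^3/(1-q^{3/2}s^3)=3q^{-1/2}/(1-q^{-1/2})=3/(\sqrt q-1)$;
\item $\mathscr D(s^2)$ gives $2z\mathscr D'/\mathscr D$;
\item the last bracket gives the $(1+z)/(1-z)^2$ term;
\item $\Phi_k'/\Phi_k$ at $s_k$ gives $1/s_k$ after computing $\Phi_k'(s_k)=3s_k$.
\end{itemize}
Together with the prefactor $-s_k^{-1}$ relation, this produces the constant $2$ and the bracket in \eqref{eq:b-secondary}. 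Each term is then multiplied by $G(s_k)=-s_k\mathfrak a(z_k)(q^2z_k)^g/(1-q^{-1/2})$, up to the factor $s_k$.

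The main obstacle is purely bookkeeping. One has to get the signs and powers of $s_k$ consistent when eliminating odd powers of $s$ via $s_k^3=q^{-2}$, and confirm that each logarithmic-derivative term matches the stated $\mathfrak b$, in particular the coefficient $2$ and the sign of $3/(\sqrt q-1)$. The derivative $\Phi_k'(s_k)/\Phi_k(s_k)=1/s_k$ needs a separate check: with $\Phi_k(s)=(s-\omega_{k+1})(s-\omega_{k+2})$ one has $\Phi_k'(s_k)=2s_k-(\omega_{k+1}+\omega_{k+2})=3s_k$, so $\Phi_k'/\Phi_k=1/s_k$. I would verify the final identity against the real case $k=0$ numerically, to catch sign slips.
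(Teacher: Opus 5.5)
Your proposal is correct and is essentially the paper's computation: factoring $(1-q^2s^3)^2=q^4(s-s_k)^2\Phi_k(s)^2$ with $\Phi_k(s_k)=3s_k^2$ and $\Phi_k'(s_k)=3s_k$ reproduces the paper's double-pole formula $\operatorname{Res}_{s=s_k}T_{+1}=\frac{s_k^2}{9}G'(s_k)-\frac{2s_k}{9}G(s_k)$ (with $G=s^{-g}R(s)\mathscr D(s^2)$), and your logarithmic-derivative bookkeeping then matches $\mathfrak a(z_k)$ and each summand of $\mathfrak b(z_k)$, which the paper leaves as ``a direct computation.'' One small slip: after using $s_k=q^2z_k^2$, the factor to pull out of $\sqrt q(1-z_k)+q^2(q-1)z_k^2$ is $q$, not $q^{3/2}$; with $q$, this combines with $9q^4s_k^5=9q^2z_k$ to give exactly $\mathfrak a(z_k)$.
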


\begin{proof}
Put
$$
R(s):=\frac{q(1-q^3s^4)}{2(1-q^2s^2)(1-q^{3/2}s^3)}
\left(\frac{(q-1)s}{1-s^2}+\sqrt q\right),
\qquad
\Delta(s):=1-q^2s^3,
$$
so that $T_{+1}(s)=G(s)\Delta(s)^{-2}$ with
$G(s):=s^{-g}R(s)\mathscr D(s^2)$.  Here $G$ is holomorphic near $s_k$,
since $\mathscr D$ is holomorphic near $z_k$ as shown above and no
denominator factor of $R$ vanishes at $s_k$.  Since $q^2s_k^3=1$ we have $\Delta'(s_k)=-3/s_k$ and
$\Delta''(s_k)=-6/s_k^2$; as $\Delta$ has a simple zero at $s_k$, expanding
$G$ and $\Delta$ in powers of $s-s_k$ gives
$$
\operatorname{Res}_{s=s_k}T_{+1}(s)
=\frac{G'(s_k)}{\Delta'(s_k)^2}
-\frac{G(s_k)\Delta''(s_k)}{\Delta'(s_k)^3}
=\frac{s_k^2}{9}G'(s_k)-\frac{2s_k}{9}G(s_k).
$$
By the product rule,
$sG'(s)=s^{-g}\bigl[(-gR(s)+sR'(s))\mathscr D(s^2)
+2s^2R(s)\mathscr D'(s^2)\bigr]$, so that, using
$s_k^{-g}=(q^2z_k)^g$ and $s_k^2=z_k$,
\begin{equation}\label{eq:secondary-residue-product-rule}
(1-q^{-1/2})\operatorname{Res}_{s=s_k}T_{+1}(s)
=(q^2z_k)^g\frac{(1-q^{-1/2})s_k}{9}
\Bigl[\bigl(-(g+2)R(s_k)+s_kR'(s_k)\bigr)\mathscr D(z_k)
+2z_kR(s_k)\mathscr D'(z_k)\Bigr].
\end{equation}
Using $s_k^2=z_k$, $s_k^3=q^{-2}$ and $z_k^3=q^{-4}$ in the definition of
$R$ gives
\begin{equation}\label{eq:G-secondary-value}
\frac{(1-q^{-1/2})s_k}{9}R(s_k)\mathscr D(z_k)=-\mathfrak a(z_k).
\end{equation}
Splitting $-(g+2)R(s_k)=-gR(s_k)-2R(s_k)$, the bracketed expression
in \eqref{eq:secondary-residue-product-rule} is
$$
-gR(s_k)\mathscr D(z_k)
+\bigl(s_kR'(s_k)-2R(s_k)\bigr)\mathscr D(z_k)
+2z_kR(s_k)\mathscr D'(z_k),
$$
and we multiply each of the three summands by $(1-q^{-1/2})s_k/9$. The first
gives $g\mathfrak a(z_k)$ by \eqref{eq:G-secondary-value}. The third is
again immediate from \eqref{eq:G-secondary-value}:
$$
\frac{(1-q^{-1/2})s_k}{9}\,2z_kR(s_k)\mathscr D'(z_k)
=-2z_k\mathfrak a(z_k)\frac{\mathscr D'(z_k)}{\mathscr D(z_k)},
$$
which is the third summand of \eqref{eq:b-secondary}. For the second,
differentiating $R$ and using the same relations, a direct computation
shows that
$\frac{(1-q^{-1/2})s_k}{9}\bigl(s_kR'(s_k)-2R(s_k)\bigr)\mathscr D(z_k)$
equals the bracketed factor in the first summand of \eqref{eq:b-secondary}
multiplied by $\mathfrak a(z_k)$, plus the second summand of
\eqref{eq:b-secondary}. The second and third summands therefore add up to
$\mathfrak b(z_k)$, and \eqref{eq:secondary-residue-product-rule} becomes
\eqref{eq:secondary-residue-evaluation}.
\end{proof}

For $m\in\mathbb Z$, put
\begin{equation}\label{eq:ab-def}
a_m:=\frac{2}{q^2}\sum_{k=0}^{2}e^{2\pi ikm/3}\mathfrak a(z_k),
\qquad
b_m:=\frac{2}{q^2}\sum_{k=0}^{2}e^{2\pi ikm/3}\mathfrak b(z_k),
\end{equation}
so that $a_{m+3}=a_m$ and $b_{m+3}=b_m$ by definition, since
$e^{2\pi ik(m+3)/3}=e^{2\pi ikm/3}$.

\begin{corollary}
\label{cor:S0-periodic}
The completed secondary contribution $\mathcal S_0(g)$ satisfies
\begin{equation}\label{eq:S0-periodic}
\mathcal S_0(g)
=
\frac{q^{2g/3+2}}{1-q^{-1/2}}
\left(a_gg+b_g\right),
\end{equation}
and the coefficients $a_m$, $b_m$ are real for every $m\in\mathbb Z$.
\end{corollary}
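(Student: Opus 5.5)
The plan is to combine Lemma~\ref{lem:mirror-cancellation-even} with Lemma~\ref{lem:secondary-residue}, and then prove reality by a complex-conjugation symmetry among the three points $z_0,z_1,z_2$.

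First, by \eqref{eq:S0-three-residues} and \eqref{eq:secondary-residue-evaluation},
\[
\mathcal S_0(g)=\frac{2}{1-q^{-1/2}}\sum_{k=0}^{2}(q^2z_k)^g\bigl(\mathfrak a(z_k)g+\mathfrak b(z_k)\bigr).
\]
Since $z_k=q^{-4/3}e^{2\pi ik/3}$, we have $q^2z_k=q^{2/3}e^{2\pi ik/3}$, so $(q^2z_k)^g=q^{2g/3}e^{2\pi ikg/3}$. Factoring out $q^{2g/3+2}$ leaves exactly $\frac{2}{q^2}\sum_k e^{2\pi ikg/3}(\mathfrak a(z_k)g+\mathfrak b(z_k))$. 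By \eqref{eq:ab-def} this is $a_gg+b_g$, which gives \eqref{eq:S0-periodic}. This step is pure bookkeeping.

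For reality, I would show that $\overline{\mathfrak a(z_k)}=\mathfrak a(\overline{z_k})$ and $\overline{\mathfrak b(z_k)}=\mathfrak b(\overline{z_k})$, where $\overline{z_k}=z_{3-k}$ (indices mod $3$). Granting this, conjugating the sum in \eqref{eq:ab-def} sends $e^{2\pi ikm/3}\mathfrak a(z_k)$ to $e^{-2\pi ikm/3}\mathfrak a(z_{-k})$, and reindexing $k\mapsto -k$ mod $3$ returns the same sum. Hence $a_m,b_m\in\mathbb R$. The rational prefactors in \eqref{eq:a-secondary} and \eqref{eq:b-secondary} have real coefficients, since $q^{-1/2}$ and $1/(\sqrt q-1)$ are real. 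So the key input is that $\mathscr D(\bar z)=\overline{\mathscr D(z)}$, and then the same holds for $\mathscr D'$.

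The main (mild) obstacle is to verify that $\mathscr D(z)=\prod_P\mathcal D_P(z,qz)$ has real Taylor coefficients. From \eqref{eq:DP} and \eqref{eq:BP}, each local factor is a rational function of $Z_P=z^{d(P)}$ and $W_P=(qz)^{d(P)}$ with real coefficients, since it involves only $|P|$. So each factor commutes with conjugation. Because the product converges absolutely and locally uniformly on $q^{-2}<|z|<q^{-1}$ by Lemma~\ref{lem:B}, the limit satisfies $\mathscr D(\bar z)=\overline{\mathscr D(z)}$ on this annulus, which is conjugation-invariant. Differentiating this identity gives $\mathscr D'(\bar z)=\overline{\mathscr D'(z)}$. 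Since $z_0$ is real and $z_1,z_2$ are conjugate, this completes the argument.
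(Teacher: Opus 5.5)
Your proposal is correct and follows the paper's own proof: you combine \eqref{eq:S0-three-residues} with Lemma~\ref{lem:secondary-residue}, factor $(q^2z_k)^g=q^{2g/3}e^{2\pi ikg/3}$, and get reality from $\mathfrak a(\overline z)=\overline{\mathfrak a(z)}$ and $\mathfrak b(\overline z)=\overline{\mathfrak b(z)}$ together with $\overline{z_k}=z_{-k}$. Your extra step, showing that $\mathscr D$ and $\mathscr D'$ commute with conjugation because the local factors have real coefficients and the product converges locally uniformly on the conjugation-invariant annulus, only spells out what the paper states in one line.
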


\begin{proof}
Combining \eqref{eq:S0-three-residues} with
Lemma~\ref{lem:secondary-residue} and using
$q^2z_k=q^{2/3}e^{2\pi ik/3}$, we obtain
$$
\mathcal S_0(g)
=
\frac{2}{1-q^{-1/2}}\sum_{k=0}^{2}(q^2z_k)^g
\bigl(\mathfrak a(z_k)g+\mathfrak b(z_k)\bigr)
=
\frac{2q^{2g/3}}{1-q^{-1/2}}\sum_{k=0}^{2}e^{2\pi ikg/3}
\bigl(\mathfrak a(z_k)g+\mathfrak b(z_k)\bigr).
$$
By \eqref{eq:ab-def} the last sum equals $\tfrac12q^{2}(a_gg+b_g)$, which
is \eqref{eq:S0-periodic}.

The coefficients are real because the local factors $\mathcal D_P(z,qz)$ and
the rational functions in \eqref{eq:a-secondary} and
\eqref{eq:b-secondary} have real coefficients, so that
$\mathfrak a(\overline z)=\overline{\mathfrak a(z)}$ and
$\mathfrak b(\overline z)=\overline{\mathfrak b(z)}$: since $z_0$ is real
and $z_2=\overline{z_1}$, the terms with $k=1$ and $k=2$ in
\eqref{eq:ab-def} are complex conjugates and the term with $k=0$ is real.
\end{proof}

\begin{lemma}
\label{lem:secondary-nonvanish}
Let $z^3=q^{-4}$. Then $\mathfrak a(z)\ne0$.
\end{lemma}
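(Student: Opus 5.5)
We prove that no factor in the definition of $\mathfrak a(z)$ vanishes at $z=z_k$. In \eqref{eq:a-secondary}, $\mathfrak a(z)$ is a product of four pieces: the constant $-1/18$; the rational factor $(1-q^3z^2)/\bigl(z(1-q^2z)(1-z)\bigr)$; the bracket $q(q-1)z^2+q^{-1/2}(1-z)$; and the Euler product $\mathscr D(z)=\prod_P\mathcal D_P(z,qz)$. We treat them in that order.

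The rational factor is finite and nonzero at $z_k$. Its denominator was already seen to be nonzero at $z_k$. For the numerator, $1-q^3z^2=0$ would force $|z|=q^{-3/2}$, whereas $|z_k|=q^{-4/3}$.

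For the bracket, we use $z^3=q^{-4}$ and argue by contradiction. Suppose $q(q-1)z^2=-q^{-1/2}(1-z)$. Taking absolute values, the left side has modulus $q(q-1)q^{-8/3}=(q-1)q^{-5/3}$. The right side has modulus at most $q^{-1/2}(1+q^{-4/3})$. The comparison $(q-1)q^{-5/3}$ versus $q^{-1/2}$ reads $q-1$ versus $q^{7/6}$, which goes the wrong way, so this crude bound does not settle it.

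Instead we multiply the equation by $z$ and use $z^3=q^{-4}$. This gives $q(q-1)q^{-4}=-q^{-1/2}(z-z^2)$, that is,
\[
z^2-z=(q-1)q^{-5/2}.
\]
So $z$ is a root of a real quadratic with positive discriminant $1+4(q-1)q^{-5/2}$. Both roots are real, and their product $-(q-1)q^{-5/2}$ is negative, so one root is negative. Only $z_0=q^{-4/3}>0$ is real among the $z_k$, so the non-real $z_1,z_2$ are excluded at once.

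For $z_0$, we have $z_0^2-z_0<0$ because $0<z_0<1$, while $(q-1)q^{-5/2}>0$. This is a contradiction, so the bracket is nonzero at every $z_k$.

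The main obstacle is the Euler product $\mathscr D(z_k)\ne0$. A convergent product can vanish only through a vanishing local factor. By Lemma~\ref{lem:B}, $\prod_P\mathcal D_P(z,qz)$ converges absolutely on compact subsets of $q^{-2}<|z|<q^{-1}$. Absolute convergence means $\sum_P|\mathcal D_P-1|<\infty$, and then the product is nonzero provided no individual factor $\mathcal D_P(z_k,qz_k)$ is zero.

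By \eqref{eq:DP}, $\mathcal D_P=\mathcal B_P\,(1-W_P/(|P|^2Z_P))(1-W_P^2)^{-1}$, where here $Z_P=z^{d}$ and $W_P=q^dZ_P=|P|Z_P$. The middle factor is $1-1/|P|\ne0$. So it remains to show $\mathcal B_P(z,qz)\ne0$. Substituting $W_P=|P|Z_P$ into \eqref{eq:BP} and writing $x=|P|Z_P$, which satisfies $|x|=q^{-d/3}<1$, the numerator of $\mathcal B_P$ becomes a polynomial in $x$. We plan to simplify it directly; we expect it to factor as $(1-x)$ times a factor of the shape $(1+x\cdot\ldots)$, giving a closed form $\mathcal B_P=1-x^2/(|P|x-1)+\ldots$ whose modulus is bounded below.

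If a clean factorization does not appear, the fallback is a size estimate. Since $|x|\le q^{-1/3}$ and $|P|x$ has modulus $q^{2d/3}\ge q^{2/3}>1$, we will show $|\mathcal B_P-1|<1$ for all $P$. If this fails for small $d$ and small $q$ such as $q=3$, those finitely many local factors would have to be checked by explicit evaluation. This is the step we expect to require the most care.

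Combining the four pieces, $\mathfrak a(z_k)\ne0$ for $k=0,1,2$.
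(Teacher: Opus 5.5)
Your handling of the rational factor is fine. Your algebraic argument for the bracket is correct: multiplying by $z$ gives $z^2-z=(q-1)q^{-5/2}$, a real quadratic with real roots, and $z_0$ is excluded by sign. The reduction of $\mathscr D(z_k)\ne0$ to $\mathcal B_P(z_k,qz_k)\ne0$ for every $P$ is also sound. Your remark that the size comparison for the bracket ``goes the wrong way'' is mistaken, though. One has $q-1<q^{7/6}$, and with the lower bound $|1-z|\ge1-q^{-4/3}$ the term $q^{-1/2}(1-z)$ does dominate; this is the paper's argument.

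The gap is the step you yourself single out: you never prove $\mathcal B_P(z_k,qz_k)\ne0$. You announce an expected factorization and a fallback inequality $|\mathcal B_P-1|<1$, and carry out neither. With $x:=(qz)^{d(P)}$ the local factor is
\[
\mathcal B_P(z,qz)=1+\frac{x-|P|x^3-x^4+x^5}{|P|x-1}
=\frac{(1-x)\bigl(-x^4+|P|x^2+|P|x-1\bigr)}{|P|x-1}.
\]
If only $|x|=|P|^{-1/3}$ is used, the triangle inequality bounds $|\mathcal B_P-1|$ by $\bigl(|P|^{-1/3}+1+|P|^{-4/3}+|P|^{-5/3}\bigr)/\bigl(|P|^{2/3}-1\bigr)$. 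This is $<1$ for $|P|\ge5$ but about $1.93$ for $|P|=3$. So your fallback, as stated, leaves the case $q=3$, $d(P)=1$ open, and you supply no evaluation there.

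The missing idea is the one you already used for the bracket: feed $z^3=q^{-4}$ into each local factor. It gives $x^3=(q^3z^3)^{d(P)}=|P|^{-1}$, hence $x^4=x/|P|$. The quartic factor then collapses to the real quadratic $|P|x^2+(|P|-|P|^{-1})x-1$, whose discriminant $(|P|-|P|^{-1})^2+4|P|$ is positive.

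A nonreal $x$ therefore cannot be a root. A real $x$ with $x^3=|P|^{-1}$ must be $x=|P|^{-1/3}$, where the quadratic equals $(|P|^{2/3}-1)+(|P|^{1/3}-|P|^{-4/3})>0$. Since also $|x|=|P|^{-1/3}<1$ and $|P||x|>1$, every $\mathcal B_P(z_k,qz_k)$ is finite and nonzero. This holds for all $q$ at once, with no numerical case check.

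This is precisely the paper's proof, written there in the variable $X=z^{d(P)}=x/|P|$, where the numerator of $\mathcal D_P(z,qz)$ becomes $|P|^3X^2+(|P|^2-1)X-1$.
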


\begin{proof}
We first show that $\mathscr D(z)\ne0$. Substituting $w'=qz$ into
\eqref{eq:DP} and using \eqref{eq:BP},
\begin{equation}\label{eq:DP-secondary}
\mathcal D_P(z,qz)
=\frac{|P|-1}{|P|}\,
\frac{-|P|^4z^{4d(P)}+|P|^3z^{2d(P)}+|P|^2z^{d(P)}-1}
{\bigl(|P|^2z^{d(P)}-1\bigr)\bigl(1+|P|z^{d(P)}\bigr)}.
\end{equation}
Put $X:=z^{d(P)}$, so that $X^3=|P|^{-4}$ since $z^3=q^{-4}$. Neither
denominator factor of \eqref{eq:DP-secondary} vanishes, and the numerator
becomes $|P|^3X^2+(|P|^2-1)X-1$, a quadratic with real coefficients and
discriminant $(|P|^2-1)^2+4|P|^3>0$, hence with real roots only. If $X$ is nonreal it is not a root; if $X$ is real then
$X^3=|P|^{-4}>0$ forces $X=|P|^{-4/3}$, where the quadratic equals
$\bigl(|P|^{2/3}-1\bigr)+\bigl(|P|^{1/3}-|P|^{-4/3}\bigr)>0$. Thus
$\mathcal D_P(z,qz)\ne0$ for every $P$. As $|z|=q^{-4/3}$ and
$|qz|=q^{-1/3}$, the pair $(z,qz)$ satisfies \eqref{eq:D-region}, so
Lemma~\ref{lem:B} gives
$\sum_P\bigl|\mathcal D_P(z,qz)-1\bigr|<\infty$, and an absolutely
convergent product of nonzero factors is nonzero.

It remains to treat the rational prefactor in \eqref{eq:a-secondary}, whose
numerator factor $1-q^3z^2$ is nonzero as already noted; only
$q(q-1)z^2+q^{-1/2}(1-z)$ has to be considered. There the second summand
dominates, since $\bigl|q(q-1)z^2\bigr|=q^{-2/3}-q^{-5/3}$ while
$\bigl|q^{-1/2}(1-z)\bigr|\ge q^{-1/2}-q^{-11/6}$, and the difference of
these is $\bigl(q^{-2/3}+q^{-11/6}\bigr)(q^{1/6}-1)>0$. Hence
$q(q-1)z^2+q^{-1/2}(1-z)\ne0$. Since $1-q^3z^2\ne0$ and
$\mathscr D(z)\ne0$, all three factors in the numerator of
\eqref{eq:a-secondary} are nonzero, and therefore $\mathfrak a(z)\ne0$.
\end{proof}

The poles of $T_{+1}$ at $s_0$, $s_1$ and $s_2$ have order exactly
two. Indeed, in the notation of the proof of
Lemma~\ref{lem:secondary-residue}, $T_{+1}(s)=G(s)(1-q^2s^3)^{-2}$ with
$G$ holomorphic at $s_k$, and $G(s_k)$ is a nonzero multiple of
$\mathfrak a(z_k)$ by \eqref{eq:G-secondary-value}, while
$\mathfrak a(z_k)\ne0$ by Lemma~\ref{lem:secondary-nonvanish}; by
\eqref{eq:T-mirror} the same holds for $T_{-1}$ at $s=-s_k$. All six surviving
secondary points are therefore genuine double poles.

\begin{corollary}
\label{cor:minimal-period}
The sequence $m\mapsto(a_m,b_m)$ has minimal period exactly three.
\end{corollary}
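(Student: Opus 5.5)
The plan is to exploit the fact that \eqref{eq:ab-def} presents $m\mapsto a_m$ as a discrete Fourier transform on $\mathbb Z/3\mathbb Z$, and then to invoke the nonvanishing of Lemma~\ref{lem:secondary-nonvanish}.

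The sequence $m\mapsto(a_m,b_m)$ has period three by construction, as noted after \eqref{eq:ab-def}. Its minimal period therefore divides $3$, so it is either $1$ or $3$. It suffices to show that the sequence is not constant, and for this we need only show that $m\mapsto a_m$ is not constant.

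Put $\omega:=e^{2\pi i/3}$. By \eqref{eq:ab-def}, $a_m=\frac{2}{q^2}\sum_{k=0}^2\omega^{km}\mathfrak a(z_k)$. We invert this using the orthogonality relation $\sum_{m=0}^{2}\omega^{(k-k')m}=3\,\mathbf 1_{k=k'}$ for $k,k'\in\{0,1,2\}$, which gives
$$
\mathfrak a(z_k)=\frac{q^2}{6}\sum_{m=0}^{2}\omega^{-km}a_m\qquad(k\in\{0,1,2\}).
$$
Suppose $a_m$ were constant, say $a_m=a$ for all $m$. For $k\in\{1,2\}$ we have $\sum_{m=0}^2\omega^{-km}=0$, so the inversion formula would give $\mathfrak a(z_1)=\mathfrak a(z_2)=0$.

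This contradicts Lemma~\ref{lem:secondary-nonvanish}, since $z_1^3=q^{-4}$ and hence $\mathfrak a(z_1)\ne0$. Therefore $(a_m)$ is not constant. The minimal period of $m\mapsto(a_m,b_m)$ is then not $1$, and so it equals $3$.

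There is essentially no obstacle in this argument. The only substantive input is the nonvanishing of $\mathfrak a$ at the nonreal cube roots $z_1,z_2$, which Lemma~\ref{lem:secondary-nonvanish} already supplies. Indeed, that lemma was formulated for all $z$ with $z^3=q^{-4}$ precisely so as to cover $z_1$ and $z_2$ and not only the real root $z_0$.
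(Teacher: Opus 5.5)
Your proposal is correct and is essentially the paper's proof. The paper evaluates $\sum_{m=0}^{2}a_m\omega^{-m}=\frac{6}{q^2}\,\mathfrak a(z_1)$, which is your inversion formula at $k=1$. It then concludes, as you do, from $\mathfrak a(z_1)\ne0$ (Lemma~\ref{lem:secondary-nonvanish}) that $a_0,a_1,a_2$ are not all equal, so the minimal period is not one and hence is three.
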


\begin{proof}
Put $\omega=e^{2\pi i/3}$, so that $\sum_{m=0}^{2}\omega^{(k-1)m}$ is $3$
for $k=1$ and $0$ for $k\in\{0,2\}$. By \eqref{eq:ab-def},
$$
\sum_{m=0}^{2}a_m\omega^{-m}
=
\frac{6}{q^2}\,\mathfrak a(z_1)
\ne0
$$
by Lemma~\ref{lem:secondary-nonvanish}. Hence $a_0$, $a_1$, $a_2$ are not
all equal, so $m\mapsto(a_m,b_m)$ is nonconstant; being periodic with
period three, its minimal period cannot be one and is therefore exactly
three.
\end{proof}

We now combine the evaluations of the principal and secondary
square-dual contributions.

\begin{proof}[Proof of Proposition~\ref{prop:square-dual-main}]
Summing \eqref{eq:SN-square-AB} over $N\in\{g,g-1\}$ and using
\eqref{eq:Sstar-square-split},
$$
\mathcal S^*(V=\square)
=
\sum_{N\in\{g,g-1\}}\left(A_{N,\mathrm e}^{*}+A_{N,\mathrm o}^{*}\right)
+
\sum_{N\in\{g,g-1\}}\left(B_{N,\mathrm e}^{*}+B_{N,\mathrm o}^{*}\right)
+
O_\varepsilon\!\left(q^{g(1+\varepsilon)/2}\right).
$$
The first sum equals $-\mathcal R_0(g;\rho)$ by
Proposition~\ref{prop:A-total}, and the second equals
$\mathcal S_0(g)+O_\varepsilon(q^{g(1+\varepsilon)/2})$ by
Proposition~\ref{prop:secondary-contour}. This is \eqref{eq:square-dual-main},
while Corollary~\ref{cor:S0-periodic} gives the stated explicit formula for
$\mathcal S_0(g)$ and the reality of its coefficients.
\end{proof}

\section{Error from nonsquare dual variables}\label{sec:nonsquare}

The purpose of this section is to bound the nonsquare-dual contribution
$\mathcal S^*(V\ne\square)$ of \eqref{eq:Mstar-decomposition}, the last of
the three terms in that decomposition. Unlike the other two, it contributes
no main or secondary term: it is $O_\varepsilon(q^{g(1+\varepsilon)/2})$,
the size of the error already incurred in \eqref{eq:Mstar-decomposition}.

\begin{proposition}
\label{prop:nonsquare}
For every $\varepsilon>0$, one has
\begin{equation}\label{eq:nonsquare-bound}
\mathcal S^*(V\ne\square)
=
O_\varepsilon\!\left(q^{g(1+\varepsilon)/2}\right).
\end{equation}
\end{proposition}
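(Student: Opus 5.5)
We bound each of the four pieces $\mathcal S_{N,\bullet}^*(V\ne\square)$, $N\in\{g,g-1\}$, $\bullet\in\{\mathrm e,\mathrm o\}$, separately. The method follows Florea's treatment of the nonsquare terms \cite[Section~7]{Fl17-1}: we first prove a uniform coefficient estimate for a fixed dual variable $V$, and then sum that estimate trivially over $V$. By \eqref{eq:SN-even-poisson} and \eqref{eq:SN-odd-poisson}, each piece is a prefactor of size $q^{2g+2}$ or $q^{2g+5/2}$ times a finite linear combination, with weights $1$, $q-1$ and $q^{-1}$, of sums of the shape
$$
\sum_{\substack{h,m\in\A^+\\ d(h)+d(m)\le N}}\frac{1}{|h|\,|m|^{3/2}}\sum_{\substack{C\in\A^+_{\le g}\\ C\mid h^\infty}}\frac{1}{|C|^2}\sum_{\substack{V\in\A^+_{\lambda-j}\\ V\ne\square}}\frac{\Gc(V,\psi_h)}{\sqrt{|h|}} ,
$$
with $\lambda=d(h)-2g+2d(C)$ and $j\in\{1,2,3,4\}$, or the corresponding cumulative sums over $V\in\A^+_{\le\lambda-j}$. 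We interchange the order of summation so that $V$ is outermost. Then $d(V)\le d(h)+2d(C)-2g-1$, and $d(h)\le N\le g$ forces $d(C)\ge(g+1-d(h)+d(V))/2$. Each $C$ therefore carries a factor $|C|^{-2}\le q^{-(g-d(h)+d(V))}$. This factor compensates for the large prefactor $q^{2g}$, much as the $C$-tail did in \eqref{eq:C-extension-square}.

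\textbf{Fixed-$V$ estimate.} For a fixed nonsquare $V\in\A^+$, consider the generating series
$$
\mathcal G_V(w,\xi)=\sum_{h\in\A^+}\frac{\Gc(V,\psi_h)}{\sqrt{|h|}}\,w^{d(h)}\sum_{C\mid h^\infty}\xi^{d(C)} .
$$
The multiplicativity \eqref{eq:Gc-mult} and the local table \eqref{eq:Gc-values} give it an Euler product. At primes $P\nmid V$ the only nonzero term is $\Gc(V,\psi_P)/\sqrt{|P|}=\bigl(\frac VP\bigr)$. Hence
$$
\mathcal G_V(w,\xi)=\mathcal L(w,\psi_{V_1})\cdot(\text{a product over }P\mid V)\cdot(\text{an absolutely convergent correction}),
$$
where $V_1$ is the odd part of $V$, and $\psi_{V_1}$ is nonprincipal because $V\ne\square$. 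The central input is the Lindel\"of-type bound on the circle $|w|=q^{-1/2}$,
$$
\mathcal L(w,\psi_{V_1})\ll_\varepsilon |V|^{\varepsilon},
$$
which follows from the Riemann hypothesis for curves, since $\mathcal L(w,\psi_{V_1})$ is a polynomial of degree $<d(V_1)$ whose zeros have modulus $q^{-1/2}$ or $1$. The primes dividing $V$ contribute $\ll_\varepsilon|V|^{\varepsilon}$ by the divisor bound. The remaining factor, which depends only on $h$, is handled by the uniform bound $|\Gc(V,\psi_{P^i})|\le|P|^{i-1/2}$ read off from \eqref{eq:Gc-values}. Applying Perron \eqref{eq:perron_partial} in $w$ on the circle $|w|=q^{-1/2-\varepsilon}$, together with the constraint on the $m$-sum, we expect
$$
\sum_{d(h)=n}\frac{\Gc(V,\psi_h)}{\sqrt{|h|}}\ll_\varepsilon q^{n/2+\varepsilon(n+d(V))},
$$
uniformly in $V$, with the $C$-weights already incorporated.

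\textbf{Summation.} Insert this estimate and sum trivially over $V$ of degree $\ell$; there are $q^{\ell}$ such $V$. Because $\ell\le d(h)+2d(C)-2g-1$, the factor $q^{\ell}$ is absorbed by the factor $|C|^{-2}$ above. What remains is a geometric sum over $d(h)\le N$ of terms of size $q^{2g}\cdot q^{-2g}\cdot q^{n/2}\cdot q^{\varepsilon g}$, up to polynomial losses coming from the number of degree choices. This totals $O_\varepsilon(q^{N/2+\varepsilon g})=O_\varepsilon(q^{g(1+\varepsilon)/2})$. The odd sector gets the same bound: its extra $\sqrt q$ in the prefactor is a constant.

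\textbf{Main obstacle.} The delicate step is the fixed-$V$ estimate. It must be uniform in $V$, and it must handle the primes $P\mid V$, where $\Gc$ can take the large values $\phi(P^i)$ when $P^i\mid V$. It must also account for the interaction between the $C$-sum and $h$. My first attempt would be to factor out $\prod_{P\mid V}$ as a finite Euler factor, bounded by $\prod_{P\mid V}(1+O(|P|^{-\delta}))\cdot|V|^{\varepsilon}$ on the contour, and to check that the extra powers of $|P|$ at those primes are paid for by the degree constraint on $V$.
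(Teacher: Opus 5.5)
Your architecture is the paper's, which follows Florea's \S7: a fixed-$V$ coefficient bound from an Euler product containing a quadratic $L$-function plus a Lindel\"of bound, then a trivial sum over $V$ in which $q^{-2d(C)}$ absorbs the $q^{\ell}$ nonsquare $V$ of degree $\ell\le\lambda-1$. The gap is in the fixed-$V$ step. You leave it at ``we expect \dots\ with the $C$-weights already incorporated'', and the factorization you propose fails in exactly the range where it is needed.

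Because $\lambda$ depends on $j=d(C)$, and $C$ depends on $h$, you must bound, for fixed $V$ and $j$, the sum $\sum_{h\in\A_n^+}\Gc(V,\psi_h)|h|^{-1/2}\,\#\{C\in\A_j^+:C\mid h^\infty\}$ without taking absolute values in $h$. Extracting the $\xi^j$-coefficient of $\mathcal G_V$ by Cauchy on $|\xi|=R$ costs $R^{-j}$. A nonempty dual sum forces $j\ge g-n/2+\tfrac12\ge g/2$, and your bookkeeping spends all of $q^{-2j}$ on $q^{\ell}$. So any fixed $R=q^{-c}$ loses $q^{cg/2}$, and you need $R=q^{-\beta}$ with $\beta$ arbitrarily small. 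But then, on your Perron circle $|w|=q^{-1/2-\beta}$, one has $|w\xi|=q^{-1/2-2\beta}>q^{-1}$. Removing $(1-\chi(P)w^{d(P)})^{-1}$ from the local factor $1+\chi(P)w^{d(P)}/(1-\xi^{d(P)})$ leaves $1+\chi(P)(w\xi)^{d(P)}/(1-\xi^{d(P)})+O(|w|^{2d(P)})$, whose linear terms are not absolutely summable over $P$. So after pulling out a single $L$-function there is no absolutely convergent correction.

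The missing idea is the repeated extraction in Lemma~\ref{lem:nonsquare-coeff}. Choose $J$ with $(J+1)\beta>1/2$ and write the series as $\prod_{i<J}\mathcal L(\sigma_Vw\xi^i,\psi_V)$ times a product of local factors $1+O(|w\xi^J|^{d(P)}+|w|^{2d(P)})$. That product converges absolutely for $|w|\le r_1$, with some $r_1>q^{-1/2-\beta}$. Each of the $J$ $L$-values is then bounded by Lindel\"of with exponent $\delta/(2J)$.

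There are three smaller issues.
\begin{enumerate}
\item By \eqref{eq:V-reciprocity} the coefficient $\bigl(\frac VP\bigr)$ equals $\sigma_V^{d(P)}\psi_V(P)$, so the $L$-function is $\mathcal L(\sigma_Vw,\psi_V)$, not $\mathcal L(w,\psi_{V_1})$. This is harmless, since the bound is uniform on $|w|\le q^{-1/2}$.
\item Knowing that the zeros have modulus $q^{-1/2}$ or $1$ gives, on the circle $|w|=q^{-1/2}$, only $|\mathcal L|\le 2^{d(V)}$. Getting $|V|^{\varepsilon}$ needs a genuine Lindel\"of-from-RH argument, as in Lemma~\ref{lem:general-Lindelof}.
\item The bound $|\Gc(V,\psi_{P^i})|\le|P|^{i-1/2}$ is false when $P^i\mid V$ with $i$ even. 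The factors at $P\mid V$ are nonetheless $O_\beta(1)$ on $|w|=q^{-1/2-\beta}$, because the $r$-th term is at most $|P|^{-\beta r}$. So no degree constraint on $V$ is needed there.
\end{enumerate}
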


Fix $N\in\{g,g-1\}$. Retaining in \eqref{eq:s-even} and \eqref{eq:s-odd}
only the nonsquare dual variables gives
\begin{align}\label{eq:nonsquare-even-explicit}
s_{\mathrm e}(h;C)_{\ne\square}
&=(q-1)\sum_{\substack{V\in\A^+_{\le\lambda-4}\\ V\ne\square}}
\frac{\Gc(V,\psi_h)}{\sqrt{|h|}}
-\sum_{V\in\A^+_{\lambda-3}}\frac{\Gc(V,\psi_h)}{\sqrt{|h|}}
\nonumber\\
&\qquad
-\frac{q-1}{q}\sum_{\substack{V\in\A^+_{\le\lambda-2}\\ V\ne\square}}
\frac{\Gc(V,\psi_h)}{\sqrt{|h|}}
+\frac{1}{q}\sum_{V\in\A^+_{\lambda-1}}\frac{\Gc(V,\psi_h)}{\sqrt{|h|}}
\end{align}
and
\begin{equation}\label{eq:nonsquare-odd-explicit}
s_{\mathrm o}(h;C)_{\ne\square}
=\sum_{\substack{V\in\A^+_{\lambda-3}\\ V\ne\square}}
\frac{\Gc(V,\psi_h)}{\sqrt{|h|}}
-\frac{1}{q}\sum_{\substack{V\in\A^+_{\lambda-1}\\ V\ne\square}}
\frac{\Gc(V,\psi_h)}{\sqrt{|h|}},
\end{equation}
where $\lambda=\lambda(h,C)=d(h)-2g+2d(C)$ as in \eqref{eq:lambda-def}.
The two exact-degree sums of \eqref{eq:nonsquare-even-explicit} carry no
restriction because, $d(h)$ being even, their degrees $\lambda-3$ and
$\lambda-1$ are odd, whereas a square polynomial has even degree.
Throughout this section we call the four dual sums of
\eqref{eq:nonsquare-even-explicit} and the two of
\eqref{eq:nonsquare-odd-explicit} the \emph{six dual sums}.

\subsection{A fixed nonsquare dual variable}

For a fixed nonsquare $V\in\A^+$, set
$\sigma_V := (-1)^{\frac{q-1}{2}d(V)}$. For $(V,h)=1$,
\begin{equation}\label{eq:V-reciprocity}
\left(\frac{V}{h}\right)
=(-1)^{\frac{q-1}{2}d(V)d(h)}\left(\frac{h}{V}\right)
=\sigma_V^{d(h)}\psi_V(h);
\end{equation}
when $(V,h)\ne1$ both symbols vanish, so \eqref{eq:V-reciprocity} holds for
every $h\in\A^+$. The sign $\sigma_V$ is identically $1$ when
$q\equiv1\bmod4$, as assumed in \cite{Fl17-1}, but not for a general odd
prime power $q$.

In \eqref{eq:SN-even-poisson} and \eqref{eq:SN-odd-poisson} the variables
$h$ and $C$ are entangled by the condition $C\mid h^\infty$. The generating
function
\begin{equation}\label{eq:Cgen}
\sum_{\substack{C\in\A^+\\ C\mid h^\infty}}u^{d(C)}
=\prod_{P\mid h}\bigl(1-u^{d(P)}\bigr)^{-1},
\qquad (|u|<1),
\end{equation}
is a function of $h$ alone. Accordingly, for $n\ge0$ and $|u|<1$, define
\begin{equation}\label{eq:Sigma-nonsquare-def}
\Sigma_n(V;u)
:=
\sum_{h\in\A_n^+}
\frac{\Gc(V,\psi_h)}{\sqrt{|h|}}
\prod_{P\mid h}
\left(1-u^{d(P)}\right)^{-1}.
\end{equation}

Since $\psi_V$ may be imprimitive, we record a Lindel\"of-type bound valid
for an arbitrary nonprincipal character. In \cite[Section~7]{Fl17-1} this is
met instead by writing $V=AD^2$ with $D$ square-free and reducing to a
square-free modulus, a reduction that Lemma~\ref{lem:general-Lindelof} makes
unnecessary.

\begin{lemma}[Function-field Lindel\"of bound]
\label{lem:general-Lindelof}
Let $\chi$ be a nonprincipal Dirichlet character over $\A$ of modulus
$F\in\A^+$. For every $\theta>0$, uniformly over all such pairs
$(F,\chi)$ and all $\xi$ satisfying $|\xi|\le q^{-1/2}$, one has
\begin{equation}\label{eq:general-Lindelof}
\mathcal L(\xi,\chi)
\ll_\theta
|F|^\theta.
\end{equation}
\end{lemma}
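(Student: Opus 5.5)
The plan is to reduce to a primitive character and then use the Riemann hypothesis for function fields, which gives a uniform bound of divisor type.

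\emph{Reduction to a primitive character.} Let $\chi$ be induced by the primitive character $\chi^*$ of conductor $F^*\mid F$. Since $\chi$ is nonprincipal, $d(F^*)\ge1$. Comparing Euler products,
$$
\mathcal L(\xi,\chi)=\mathcal L(\xi,\chi^*)\prod_{P\mid F,\,P\nmid F^*}\bigl(1-\chi^*(P)\xi^{d(P)}\bigr)
$$
as polynomials in $\xi$. For $|\xi|\le q^{-1/2}$, each factor in the finite product has modulus at most $2$. Hence the product is bounded by $2^{\omega(F)}$, where $\omega(F)$ is the number of prime divisors of $F$. The divisor bound over $\A$ gives $2^{\omega(F)}\ll_\theta|F|^{\theta/2}$, since $\omega(F)\ll d(F)/\log_q d(F)$.

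\emph{The primitive case.} By \cite[Proposition~4.3]{Ro02}, $\mathcal L(u,\chi^*)$ is a polynomial of degree at most $d(F^*)-1$. Weil's theorem (the Riemann hypothesis for curves) lets us write
$$
\mathcal L(u,\chi^*)=(1-u)^{\delta}\prod_{j=1}^{n}(1-\alpha_ju),
$$
where $\delta\in\{0,1\}$ records the trivial zero (present precisely when $\chi^*$ is even), $|\alpha_j|=\sqrt q$, and $n+\delta\le d(F^*)-1$. For $|\xi|\le q^{-1/2}$ this gives
$$
|\mathcal L(\xi,\chi^*)|\le 2^{\delta}\prod_{j}(1+\sqrt q\,|\xi|)\le 2^{d(F^*)}.
$$
On its own this is too weak: $2^{d(F)}=|F|^{\log 2/\log q}$, which is not $|F|^\theta$ for arbitrary $\theta$. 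So the crude product bound must be replaced by the standard explicit-formula argument.

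\emph{The explicit-formula argument.} Take logarithms. For $|\xi|\le q^{-1/2}$ we have
$$
\log\mathcal L(\xi,\chi^*)=\sum_{m\ge1}\frac{\xi^m}{m}\Bigl(-\sum_j\alpha_j^m-\delta\Bigr),
$$
and we truncate at $m\le M$. For $m\le M$, bound the coefficient using $\sum_{d(f)=m}\Lambda(f)\chi^*(f)$, which is trivially $\le q^m$. This makes the terms with $m\le M$ contribute at most $\sum_{m\le M}q^{m}|\xi|^m/m\le\sum_{m\le M}q^{m/2}/m\ll q^{M/2}$. Using RH for the terms $m>M$ yields $n\,q^{m/2}|\xi|^m/m\le d(F)/m$ per term. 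That tail sum diverges at $|\xi|=q^{-1/2}$, so the same bound cannot be used directly on the circle.

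\emph{Main obstacle: the boundary circle.} The hard step is getting uniformity up to the circle $|\xi|=q^{-1/2}$. I would first try the standard Soundararajan/Carneiro--Chandee style bound via Phragm\'en--Lindel\"of in the $s$-variable. Set $\xi=q^{-s}$. The truncation argument above gives $\log|L(s,\chi^*)|\le d(F)/(M(\sigma-\tfrac12)\log q)+O(q^{M/2})$ for $\sigma>1/2$. At $\sigma=\tfrac12$, the zeros lie on the line, so pass through the explicit formula for $\log|L|$ with a smoothed sum over $m\le M$. The bound is
$$
\log|L(\tfrac12+it,\chi^*)|\le \frac{d(F)}{M+1}+O\bigl(q^{M/2}/M\bigr),
$$
which is the function-field version of the bound $\log|L|\le(\log\mathfrak q)/(2\log x)+\dots$ from Soundararajan's method, using the positivity coming from RH. Choosing $M\approx 2\log_q d(F)$ gives
$$
\log|L|\ll\frac{d(F)}{\log_q d(F)}=o(\theta\,d(F)\log q),
$$
hence $\ll_\theta|F|^{\theta/2}$, uniformly in $\chi$ and in $\xi$ on the closed disc, by the maximum principle. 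Combining this with the imprimitive factor from the reduction step gives \eqref{eq:general-Lindelof}. This last estimate is where the real work lies.
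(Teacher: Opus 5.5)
Your argument is correct and follows essentially the same route as the paper. You pass to the primitive $\chi^*$ of conductor $F^*\mid F$, bound the finitely many extra Euler factors at $P\mid F$, $P\nmid F^*$ by $2^{\omega(F)}\ll_\theta|F|^{\theta/2}$, and then bound $\mathcal L(\xi,\chi^*)$ on $|\xi|\le q^{-1/2}$ by an RH-based Lindel\"of estimate; the paper simply imports that last estimate from \cite[Theorem~20]{BCDGLD18}.

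The boundary-circle step you flag as the real work is exactly that cited input. If you want it self-contained, your own $\sigma>\tfrac12$ estimate already suffices. Since $|1-e^{i\phi}|\le|r-e^{i\phi}|$ for $r\ge1$, on $|\xi|=q^{-1/2}$ you get $\log|\mathcal L(\xi,\chi^*)|\le d(F)\log r+\log|\mathcal L(\xi/r,\chi^*)|+O(1)$. Taking $\log r=M^{-1/2}$ with $M\asymp\log_q d(F)$ then gives $\log|\mathcal L(\xi,\chi^*)|\ll d(F)/\sqrt{\log_q d(F)}$, which is enough.
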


\begin{proof}
Let $\chi^*$ be the primitive character inducing $\chi$, of conductor
$F^*\mid F$. The explicit bound of \cite[Theorem~20]{BCDGLD18}, stated
there for the $\ell$-th power residue symbol modulo a place, uses the
character only through the Riemann hypothesis for the associated primitive
$L$-function and the degree of the conductor, and therefore gives
$\mathcal L(\xi,\chi^*)\ll_\alpha|F^*|^\alpha$ for every $\alpha>0$ and
every $\xi$ with $|\xi|\le q^{-1/2}$; if $\chi^*$ is even, the trivial
factor $1-\xi$ is bounded by $1+q^{-1/2}$ there and does not affect this.
Finally,
$$
\mathcal L(\xi,\chi)=\mathcal L(\xi,\chi^*)
\prod_{\substack{P\mid F\\ P\nmid F^*}}\bigl(1-\chi^*(P)\xi^{d(P)}\bigr),
$$
and each factor of the product has modulus at most $2$ there, so the product
is at most $2^{\omega(F)}$, hence at most the number of monic divisors of
$F$, which is $\ll_\alpha|F|^\alpha$. Taking $\alpha=\theta/2$ and using
$|F^*|\le|F|$ gives \eqref{eq:general-Lindelof}.
\end{proof}

The following bound on $\Sigma_n(V;u)$ is \cite[(7.5)]{Fl17-1} in the
twisted form required here. Its proof follows \cite[Section~7]{Fl17-1}, and
indicates where the twist and the imprimitivity of $\psi_V$ enter.

\begin{lemma}
\label{lem:nonsquare-coeff}
Let $\beta,\delta>0$. Uniformly for $|u|=q^{-\beta}$, $n\ge0$,
and nonsquare $V\in\A^+$, one has
\begin{equation}\label{eq:nonsquare-coeff-bound}
\Sigma_n(V;u)
\ll_{\beta,\delta}
q^{n(1/2+\beta)}|V|^\delta.
\end{equation}
\end{lemma}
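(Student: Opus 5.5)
We follow the strategy of \cite[Section~7]{Fl17-1}. First we form a generating series in an auxiliary variable $x$,
$$
\mathcal G(V;x,u):=\sum_{h\in\A^+}\frac{\Gc(V,\psi_h)}{\sqrt{|h|}}\prod_{P\mid h}\bigl(1-u^{d(P)}\bigr)^{-1}x^{d(h)},
$$
so that $\Sigma_n(V;u)$ is its coefficient of $x^n$. By the multiplicativity \eqref{eq:Gc-mult} of $\Gc(V,\psi_h)$ in $h$, this series has an Euler product. The local factors come from the table \eqref{eq:Gc-values}, with $k=v_P(V)$.

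For $P\nmid V$ only $i=1$ survives, and the factor is
$$
1+\Bigl(\frac{V}{P}\Bigr)x^{d(P)}\bigl(1-u^{d(P)}\bigr)^{-1}.
$$
By \eqref{eq:V-reciprocity} we have $\bigl(\frac{V}{P}\bigr)=\sigma_V^{d(P)}\psi_V(P)$. We therefore compare the product over $P\nmid V$ with $\mathcal L(\sigma_V x,\psi_V)$, writing
$$
\mathcal G(V;x,u)=\mathcal L(\sigma_Vx,\psi_V)\,\mathcal H(V;x,u).
$$
The correction factor $\mathcal H$ has two parts. For $P\nmid V$ its local factors equal $1+O\bigl(|x|^{d(P)}|u|^{d(P)}+|x|^{2d(P)}\bigr)$. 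The finitely many $P\mid V$ contribute polynomial-type factors, since by \eqref{eq:Gc-values} $\Gc$ vanishes for $i\ge k+2$. Each such factor is bounded termwise by $\sum_{i\le k+1}|P|^{i/2}|x|^{id(P)}(1-|u|)^{-1}$, which is $\ll 1$ when $|x|\le q^{-1/2}$. Their product over $P\mid V$ is then $\ll_\delta |V|^{\delta}$ by the divisor bound. For the part with $P\nmid V$, absolute convergence of $\mathcal H$ needs $|xu|<q^{-1}$ and $|x|^2<q^{-1}$. With $|u|=q^{-\beta}$, this holds on $|x|=q^{-1/2-\beta}$, and a slightly smaller radius keeps us at a fixed distance from the boundary.

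Since $V$ is not a square, $\psi_V$ is nonprincipal by \eqref{eq:psi-induced}. Lemma~\ref{lem:general-Lindelof} then gives $\mathcal L(\sigma_Vx,\psi_V)\ll_\delta|V|^\delta$ for $|x|\le q^{-1/2}$; the sign $\sigma_V$ is harmless because only $|\sigma_Vx|=|x|$ matters. Combining this with the bound on $\mathcal H$ gives
$$
\mathcal G(V;x,u)\ll_{\beta,\delta}|V|^{2\delta}\quad\text{on } |x|=q^{-1/2-\beta'},
$$
for any fixed $0<\beta'\le\beta$ at which $\mathcal H$ converges. To get the exponent exactly $1/2+\beta$, we take the circle $|x|=q^{-1/2-\beta}$. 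The conditions there are $|xu|=q^{-1/2-2\beta}<q^{-1}$, which holds only if $\beta>1/4$, and $|x|^2=q^{-1-2\beta}<q^{-1}$, which always holds.

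The $xu$-condition is therefore the delicate point. The local factor at $P\nmid V$ has the form
$$
1+\chi x^{d}+\chi x^{d}u^{d}+\chi x^du^{2d}+\cdots,
$$
and after dividing by $1+\chi x^d+\cdots$ the error is of size $|x|^{d}|u|^{d}$. Without the weight $\sqrt{|P|}$ this is summable over $P$ when $q|x||u|<1$, that is, when $q^{1/2-2\beta}<1$. Rechecking with the normalization $\Gc/\sqrt{|h|}=\bigl(\frac{V}{P}\bigr)$ confirms that this is the condition.

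The obstacle is that this condition fails for small $\beta$. If it does, we factor out one more $L$-function: we write $\mathcal L(\sigma_Vxu,\psi_V)$ on top of $\mathcal L(\sigma_Vx,\psi_V)$, and similarly for higher powers $xu^j$. Each extra factor is again $\ll|V|^\delta$ by Lemma~\ref{lem:general-Lindelof}, because $|xu^j|\le q^{-1/2}$. Finitely many such factors, their number depending on $\beta$, make the remainder converge absolutely. With that in place, Cauchy's estimate on $|x|=q^{-1/2-\beta}$ yields $\Sigma_n(V;u)\ll q^{n(1/2+\beta)}|V|^{\delta}$ after renaming $\delta$, which is \eqref{eq:nonsquare-coeff-bound}.
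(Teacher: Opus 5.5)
Your proposal is correct and is essentially the paper's argument: the same Euler product for the generating series, extraction of $J$ twisted factors $\mathcal L(\sigma_V x u^j,\psi_V)$ with $J$ depending only on $\beta$ so that the residual product converges absolutely on a neighbourhood of $|x|=q^{-1/2-\beta}$, the Lindel\"of bound of Lemma~\ref{lem:general-Lindelof} for each extracted factor, a divisor bound for the factors at $P\mid V$, and Cauchy's estimate on that circle. One small correction: the factors at $P\mid V$ are $O_\beta(1)$ on the circle $|x|=q^{-1/2-\beta}$ that you actually use, since there the terms are at most $|P|^{-\beta i}$, but not uniformly for $|x|\le q^{-1/2}$, where your termwise bound grows with $v_P(V)$. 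This is harmless, because the divisor bound absorbs it either way.
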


\begin{proof}
Let $u$ be any point with $|u|=q^{-\beta}$; every implied constant below
depends only on $\beta$ and $\delta$, which gives the uniformity in $u$
asserted. Put
$$
\mathcal F(V;w,u)
:=\sum_{h\in\A^+}\frac{\Gc(V,\psi_h)}{\sqrt{|h|}}w^{d(h)}
\prod_{P\mid h}\bigl(1-u^{d(P)}\bigr)^{-1},
$$
whose coefficient of $w^n$ is $\Sigma_n(V;u)$; this is the twisted
counterpart of the series $B(V;w,u)$ of \cite[(7.2)]{Fl17-1}. By \eqref{eq:Gc-mult} its coefficients
are multiplicative in $h$, and since the factor $(1-u^{d(P)})^{-1}$ occurs
exactly when $P\mid h$, its $P$-Euler factor is
\begin{equation}\label{eq:BV-local-general}
1+\frac{1}{1-u^{d(P)}}\sum_{r\ge1}
\frac{\Gc(V,\psi_{P^r})}{|P|^{r/2}}w^{rd(P)}.
\end{equation}
For $P\nmid V$ only $r=1$ survives in \eqref{eq:Gc-values}, and
\eqref{eq:V-reciprocity} gives
$\Gc(V,\psi_P)/|P|^{1/2}=\bigl(\frac VP\bigr)=\sigma_V^{d(P)}\psi_V(P)$, so
that \eqref{eq:BV-local-general} becomes
\begin{equation}\label{eq:BV-local-coprime}
1+\frac{\psi_V(P)(\sigma_Vw)^{d(P)}}{1-u^{d(P)}}.
\end{equation}
This is the one place at which the twist enters: \cite[(7.2)]{Fl17-1} has
the same factor with $w$ in place of $\sigma_Vw$. For $P\mid V$, writing
$V=P^kV_1$ with $P\nmid V_1$, the terms with $r\ge k+2$ vanish by
\eqref{eq:Gc-values}, so \eqref{eq:BV-local-general} is a polynomial in $w$.
Since $|1-u^{d(P)}|^{-1}\le(1-q^{-\beta})^{-1}$ and $\sum_P|w|^{d(P)}$
converges for $|w|<q^{-1}$, the series $\mathcal F$ converges absolutely
there and admits an Euler product.

Cauchy's formula will be applied on $|w|=r_0:=q^{-1/2-\beta}$, chosen because
$r_0^{-n}=q^{n(1/2+\beta)}$; as the circle $|w|=r_0$ may lie outside the disc
$|w|<q^{-1}$ of absolute convergence, $\mathcal F$ must first be continued,
which is done by the repeated Euler-factor extraction of
\cite[(7.3)]{Fl17-1}. Choose an integer $J\ge1$ with $(J+1)\beta>1/2$, so
that $r_0|u|^J=q^{-1/2-(J+1)\beta}<q^{-1}$; such a $J$ depends only on
$\beta$. As $r_0^2=q^{-1-2\beta}<q^{-1}$ as well, both inequalities persist
for some $r_1>r_0$, that is,
$r_1|u|^J<q^{-1}$ and $r_1^2<q^{-1}$. For $P\nmid V$, put
\begin{equation}\label{eq:BV-residual-J}
R_{P,J}(V;w,u):=
\left(1+\frac{\psi_V(P)(\sigma_Vw)^{d(P)}}{1-u^{d(P)}}\right)
\prod_{j=0}^{J-1}\Bigl(1-\psi_V(P)(\sigma_Vwu^j)^{d(P)}\Bigr),
\end{equation}
so that \eqref{eq:BV-local-coprime} equals
$\Bigl(\prod_{j<J}\bigl(1-\psi_V(P)(\sigma_Vwu^j)^{d(P)}\bigr)^{-1}\Bigr)R_{P,J}$.
Writing $X:=\psi_V(P)(\sigma_Vw)^{d(P)}$ and $t:=u^{d(P)}$, we have
$R_{P,J}=\bigl(1+\frac{X}{1-t}\bigr)\prod_{j<J}(1-Xt^j)$; substituting
$\frac1{1-t}=\sum_{j<J}t^j+\frac{t^J}{1-t}$ shows that the coefficient of
$X^0$ is $1$ and that of $X^1$ is $t^J/(1-t)$, the terms
$t^0,\dots,t^{J-1}$ being exactly what the extracted factors cancel. As
$|t|\le q^{-\beta}$, the polynomial $R_{P,J}$ has degree $J+1$ in $X$ with
coefficients bounded in terms of $\beta$, and $|X|\le|w|^{d(P)}$; hence
\begin{equation}\label{eq:BV-residual-J-bound}
R_{P,J}(V;w,u)=1+O_\beta\bigl(|wu^J|^{d(P)}+|w|^{2d(P)}\bigr)
\qquad(|w|\le r_1).
\end{equation}

Multiplying \eqref{eq:BV-residual-J} over $P\nmid V$ and adjoining the
finitely many factors \eqref{eq:BV-local-general} at $P\mid V$, and using
$\psi_V(P)=0$ for $P\mid V$ to complete each extracted product to a full
Euler product, we obtain
\begin{equation}\label{eq:BV-factorization-nonsquare}
\mathcal F(V;w,u)
=\Bigl(\prod_{j=0}^{J-1}\mathcal L(\sigma_Vwu^j,\psi_V)\Bigr)
 \mathcal C_J(V;w,u),
\end{equation}
where $\mathcal C_J$ denotes the product of the $R_{P,J}$ over $P\nmid V$
and of the local factors \eqref{eq:BV-local-general} over $P\mid V$. By
\eqref{eq:BV-residual-J-bound} and the inequalities $r_1|u|^J<q^{-1}$ and
$r_1^2<q^{-1}$, the series
$\sum_P\bigl((r_1|u|^J)^{d(P)}+r_1^{2d(P)}\bigr)$ converges, so
$\prod_{P\nmid V}R_{P,J}$ converges absolutely and uniformly for
$|w|\le r_1$ and is $\ll_\beta1$ there, uniformly in $V$. Since each
$\mathcal L(\sigma_Vwu^j,\psi_V)$ is a polynomial in $w$ and the remaining
product is finite, the right-hand side of
\eqref{eq:BV-factorization-nonsquare} is holomorphic for $|w|<r_1$ and, by
the identity theorem, continues $\mathcal F(V;w,u)$ there; in particular
\eqref{eq:BV-factorization-nonsquare} holds on $|w|=r_0$.

On $|w|=r_0$ one has $|w|^{d(P)}=|P|^{-1/2-\beta}$, so by
\eqref{eq:Gc-values} the $r$-th term of \eqref{eq:BV-local-general} has
modulus at most $|P|^{-\beta r}$ when $r\le k$ is even, at most
$|P|^{-1/2-\beta r}$ when $r=k+1$, and vanishes otherwise. Summing over $r$
and using $|1-u^{d(P)}|^{-1}\le(1-q^{-\beta})^{-1}$ bounds each factor at
$P\mid V$ by a constant $C_\beta$, uniformly in $P$ and $k$; their product
is therefore at most $C_\beta^{\omega(V)}$; since $2^{\omega(V)}$ is at most
the number of monic divisors of $V$, which is $\ll_\alpha|V|^\alpha$ for
every $\alpha>0$, this is $\ll_{\beta,\delta}|V|^{\delta/2}$. Consequently
$\mathcal C_J(V;w,u)\ll_{\beta,\delta}|V|^{\delta/2}$ on $|w|=r_0$.

Finally, $\psi_V$ is nonprincipal by \eqref{eq:psi-induced}, $V$ not being
a square, and on $|w|=r_0$ one has
$|\sigma_Vwu^j|=q^{-1/2-(j+1)\beta}<q^{-1/2}$ for $0\le j<J$, so
Lemma~\ref{lem:general-Lindelof} with $F=V$ and $\theta=\delta/(2J)$ gives
$\prod_{j<J}\mathcal L(\sigma_Vwu^j,\psi_V)\ll_{\beta,\delta}|V|^{\delta/2}$.
Hence $\mathcal F(V;w,u)\ll_{\beta,\delta}|V|^\delta$ on $|w|=r_0$, and
estimating
$\Sigma_n(V;u)=\frac{1}{2\pi i}\oint_{|w|=r_0}\mathcal F(V;w,u)w^{-n-1}\,dw$
trivially, with $r_0^{-n}=q^{n(1/2+\beta)}$, gives
\eqref{eq:nonsquare-coeff-bound}.
\end{proof}

\subsection{Completion of the nonsquare estimate}

We now combine the fixed-$V$ coefficient bound of
Lemma~\ref{lem:nonsquare-coeff} with the six dual sums to prove
Proposition~\ref{prop:nonsquare}.

\begin{proof}[Proof of Proposition~\ref{prop:nonsquare}]
All $C\in\A_j^+$ have $|C|=q^j$, and by \eqref{eq:Cgen} the number of those
dividing $h^\infty$ is the coefficient of $u^j$ in
$\prod_{P\mid h}(1-u^{d(P)})^{-1}$. Cauchy's formula therefore gives, for
$j\ge0$ and any $\beta>0$, the identity of \cite[Section~7]{Fl17-1},
\begin{align}\label{eq:C-degree-extraction}
\sum_{\substack{C\in\A_j^+\\ C\mid h^\infty}}\frac{1}{|C|^2}
=q^{-2j}\,\#\bigl\{C\in\A_j^+:C\mid h^\infty\bigr\}
=\frac{q^{-2j}}{2\pi i}
\oint_{|u|=q^{-\beta}}
\frac{1}{u^{j+1}}
\prod_{P\mid h}
\left(1-u^{d(P)}\right)^{-1}
\,du.
\end{align}

Fix $n\ge0$ and a nonsquare $V\in\A_\ell^+$. Substituting
\eqref{eq:C-degree-extraction} into the $h$-sum of degree $n$ and
interchanging the finite sums with the integral, \eqref{eq:Sigma-nonsquare-def}
and Lemma~\ref{lem:nonsquare-coeff} give
\begin{align}\label{eq:nonsquare-fixedV-block}
\sum_{h\in\A_n^+}\frac{\Gc(V,\psi_h)}{\sqrt{|h|}}
\sum_{\substack{C\in\A_j^+\\ C\mid h^\infty}}\frac{1}{|C|^2}
=\frac{q^{-2j}}{2\pi i}\oint_{|u|=q^{-\beta}}
\frac{\Sigma_n(V;u)}{u^{j+1}}\,du
\ll_{\beta,\delta}q^{-2j+\beta j}q^{n(1/2+\beta)}|V|^\delta,
\end{align}
the second step because the contour length $2\pi q^{-\beta}$ and the factor
$|u|^{-(j+1)}=q^{\beta(j+1)}$ together contribute $q^{\beta j}$.

Since $\#\A_\ell^+=q^\ell$ and $|V|=q^\ell$ for $V\in\A_\ell^+$, summing the
moduli of the left-hand sides of \eqref{eq:nonsquare-fixedV-block} over the
nonsquare $V\in\A_\ell^+$ yields
\begin{align}
\sum_{\substack{V\in\A_\ell^+\\ V\ne\square}}
\Biggl|
\sum_{h\in\A_n^+}
\frac{\Gc(V,\psi_h)}{\sqrt{|h|}}
\sum_{\substack{C\in\A_j^+\\ C\mid h^\infty}}
\frac{1}{|C|^2}
\Biggr|
\ll_{\beta,\delta}
q^{-2j+\beta j}
q^{n(1/2+\beta)}
q^{\ell(1+\delta)}.
\label{eq:nonsquare-block}
\end{align}

For $L\ge0$, summing \eqref{eq:nonsquare-block} over
$0\le\ell\le L$ and using
$\sum_{\ell=0}^{L}q^{\ell(1+\delta)} \ll_\delta q^{L(1+\delta)}$,
we obtain
\begin{align}
\sum_{\substack{V\in\A^+\\ d(V)\le L\\ V\ne\square}}
\Biggl|
\sum_{h\in\A_n^+}
\frac{\Gc(V,\psi_h)}{\sqrt{|h|}}
\sum_{\substack{C\in\A_j^+\\ C\mid h^\infty}}
\frac{1}{|C|^2}
\Biggr|
\ll_{\beta,\delta}
q^{-2j+\beta j}
q^{n(1/2+\beta)}
q^{L(1+\delta)}.
\label{eq:nonsquare-block-cumulative}
\end{align}

We now apply \eqref{eq:nonsquare-block} to the dual sums of exact degree and
\eqref{eq:nonsquare-block-cumulative} to those of bounded degree. Recall that
$N\in\{g,g-1\}$; fix $n=d(h)$ and $j=d(C)$, so that $\lambda=n-2g+2j$ by
\eqref{eq:lambda-def}, and for each of the six dual sums let $\mu$ denote
its exact dual degree or, in the two cumulative cases, its upper bound; the
six values are listed in the third column of the table below. Applying
\eqref{eq:nonsquare-block} or \eqref{eq:nonsquare-block-cumulative} with
$\ell=\mu$ or $L=\mu$ bounds the term, at fixed $n$ and $j$, by
$$
\Bigl(\sum_{\substack{m\in\A^+\\ d(m)\le N-n}}|m|^{-3/2}\Bigr)
\,
q^{\beta(n+j)+\delta\mu}
\,
\times
\ \text{(last column below)},
$$
where the factor $q^{\beta(n+j)+\delta\mu}$ is common to all six terms.
The last column collects the remaining powers of $q$, that is, the
prefactor $q^{2g+2}$ or $q^{2g+5/2}$, the scalar factor, and the powers
$q^{-n}$, $q^{-2j}$, $q^{n/2}$ and $q^{\mu}$ supplied by
\eqref{eq:nonsquare-block} and \eqref{eq:nonsquare-block-cumulative}:
\begin{center}
\renewcommand{\arraystretch}{1.25}
\begin{tabular}{c c c c}
term & scalar factor & dual degree or upper bound & last column\\
\hline
even, first
& $q-1$
& $d(V)\le\lambda-4$
& $q^{n/2-1}$\\

even, second
& $-1$
& $d(V)=\lambda-3$
& $q^{n/2-1}$\\

even, third
& $-(q-1)/q$
& $d(V)\le\lambda-2$
& $q^{n/2}$\\

even, fourth
& $1/q$
& $d(V)=\lambda-1$
& $q^{n/2}$\\

odd, first
& $1$
& $d(V)=\lambda-3$
& $q^{n/2-1/2}$\\

odd, second
& $-1/q$
& $d(V)=\lambda-1$
& $q^{n/2+1/2}$
\end{tabular}
\end{center}

A dual sum with $\mu<0$ is empty, so we may assume $\mu\ge0$; the
largest of the six values of $\mu$ is $\lambda-1$, which is at most
$n-1$ since $j\le g$, and $d(h)+d(m)\le N$ forces $n\le N\le g$. Hence
$\beta(n+j)\le2\beta g$ and $\delta\mu\le\delta g$, so the common factor
is at most $q^{(2\beta+\delta)g}$. Moreover
$\sum_{d(m)\le N-n}|m|^{-3/2}\le\bigl(1-q^{-1/2}\bigr)^{-1}$.

Every entry of the last column is $O_q(q^{n/2})$. Taking absolute values in
\eqref{eq:SN-even-poisson} and \eqref{eq:SN-odd-poisson},
inserting the six bounds together with the two estimates just proved, and
summing over $0\le j\le g$ and $0\le n\le N$ -- which only enlarges the
bound, each sector restricting $n$ to one parity -- we obtain, for
$\bullet\in\{\mathrm e,\mathrm o\}$,
$$
\mathcal S_{N,\bullet}^{*}(V\ne\square)
\ll_{\beta,\delta}
q^{(2\beta+\delta)g}\sum_{j=0}^{g}\sum_{n=0}^{N}q^{n/2}
\ll_{\beta,\delta}(g+1)\,q^{N/2+(2\beta+\delta)g}.
$$
Given $\varepsilon>0$, choose $\beta,\delta>0$ with
$2\beta+\delta<\varepsilon/4$. Since
$g+1\ll_\varepsilon q^{\varepsilon g/4}$ and $N\le g$, the right-hand
side is $\ll_\varepsilon q^{N/2+\varepsilon g/2}\ll_\varepsilon
q^{g(1+\varepsilon)/2}$. This bounds
$\mathcal S_{N,\mathrm e}^{*}(V\ne\square)$ and
$\mathcal S_{N,\mathrm o}^{*}(V\ne\square)$ for each $N\in\{g,g-1\}$, so
\eqref{eq:Mstar-nonsquare} gives \eqref{eq:nonsquare-bound}.
\end{proof}

\section{Proof of Theorem~\ref{thm:main}}\label{sec:proof}

We assemble the three contributions evaluated in \S\ref{sec:main-term},
\S\ref{sec:square-dual} and \S\ref{sec:nonsquare}: the two retained contours
cancel, which evaluates $M^*(g)$, and \eqref{eq:recover} returns the result
to $M(g)$.

\begin{proof}[Proof of Theorem~\ref{thm:main}]
Let $\varepsilon>0$ and fix $\rho$ with $1<\rho<q$. Propositions
\ref{prop:principal} and \ref{prop:square-dual-main} hold for every such
$\rho$; applied with the same $\rho$ they give
$\mathcal M^*=\mathcal Q_g+\mathcal R_0(g;\rho)
+O_\varepsilon(q^{\varepsilon g})$ and
$\mathcal S^*(V=\square)=-\mathcal R_0(g;\rho)+\mathcal S_0(g)
+O_\varepsilon(q^{g(1+\varepsilon)/2})$, so that the two occurrences of
$\mathcal R_0(g;\rho)$ are the same quantity and cancel on adding. With
Proposition~\ref{prop:nonsquare}, the decomposition
\eqref{eq:Mstar-decomposition} therefore gives
$$
M^*(g)=\mathcal Q_g+\mathcal S_0(g)
+O_\varepsilon\bigl(q^{\varepsilon g}\bigr)
+O_\varepsilon\bigl(q^{g(1+\varepsilon)/2}\bigr).
$$
It suffices to prove \eqref{eq:main} for $0<\varepsilon\le1$, since for
larger $\varepsilon$ it follows from the case $\varepsilon=1$; for such
$\varepsilon$ one has $\varepsilon g\le g(1+\varepsilon)/2$, whence
\begin{equation}\label{eq:completed_first_final}
M^*(g)=\mathcal Q_g+\mathcal S_0(g)
+O_\varepsilon\!\left(q^{g(1+\varepsilon)/2}\right).
\end{equation}

By \eqref{eq:recover}, $M(g)=(1-q^{-1/2})M^*(g)$, and $0<1-q^{-1/2}<1$
leaves the error term unchanged. In \eqref{eq:S0-periodic} the factor
$1-q^{-1/2}$ cancels the denominator, so that, with $\mathcal S(g)$ as in
\eqref{eq:S-intro},
$$
(1-q^{-1/2})\mathcal S_0(g)
=q^{2g/3+2}\bigl(a_gg+b_g\bigr)=\mathcal S(g);
$$
and by \eqref{eq:P-final} $(1-q^{-1/2})\mathcal Q_g$ is the principal term
$\mathcal M_{\mathrm{prin}}(g)$ of \eqref{eq:Mprin-intro}. Multiplying
\eqref{eq:completed_first_final} by $1-q^{-1/2}$ therefore gives
\eqref{eq:main}.

Finally, the six coefficients $a_0,a_1,a_2,b_0,b_1,b_2$ are real by
Corollary~\ref{cor:S0-periodic} and do not depend on $g$, and
$m\mapsto(a_m,b_m)$ has minimal period exactly three by
Corollary~\ref{cor:minimal-period}.
\end{proof}

\begin{remark}[Comparison with the earlier even-degree formulas]
\label{rem:comparison}
In the argument above the secondary contribution is the sum of the residues
at the three zeros of $1-q^4z^3$, namely $z_k=q^{-4/3}e^{2\pi ik/3}$ with $k\in\{0,1,2\}$. 
They enter the final answer only through the factor
$(q^2z_k)^g=q^{2g/3}e^{2\pi ikg/3}$ of Lemma~\ref{lem:secondary-residue},
and by Lemma~\ref{lem:secondary-nonvanish} none of the three residues
vanishes identically in $g$. This is the origin of the period-three
behaviour.

At the corresponding step the earlier treatments retain only the residue at
the real point $z=q^{-4/3}$: the $z$-contour is enlarged from $|z|=q^{-3/2}$
to $|z|=q^{-1-\varepsilon}$ across an integrand whose denominator contains
$1-q^4z^3$, and the residue is taken there; see \cite[Sections~3.2.1
and~3.2.2]{Ju20}, \cite[Lemma~5.7]{AM-21} and, for the odd-degree family,
\cite[Section~6]{Fl17-1}. All three zeros of $1-q^4z^3$ have modulus
$q^{-4/3}$ and so lie strictly inside the annulus crossed by that shift;
keeping only $z=q^{-4/3}$ suppresses the phases $e^{2\pi ikg/3}$ and yields
a lower-order term with no dependence on $g\bmod3$, which is the shape
recorded in \cite[Theorem~1.1]{Ju20} and \cite[Theorem~1.4]{AM-21}.
Remark~\ref{rem:numerics} shows that the period-three shape is the one
realized by $M(g)$.

The simple-pole branch is a second point of difference. Here it is removed
by Lemma~\ref{lem:mirror-cancellation-even} once the two truncation
levels $N=g$ and $N=g-1$ are combined, in agreement with
\cite[(4.5)]{Ju20}; \cite[(5.38)]{AM-21} retains from it two further terms,
of sizes $q^{g/6+\lfloor g/2\rfloor}$ and $q^{g/6+\lfloor(g-1)/2\rfloor}$,
both of order $q^{2g/3}$.
\end{remark}

\begin{remark}[Numerical verification]\label{rem:numerics}
The moment $M(g)$ can be computed exactly for small $q$ and $g$: by
Lemma~\ref{lem:AFE} and \eqref{eq:SN_def} it is determined by the integers
$$
S_n:=\sum_{h\in\A_n^+}\ \sum_{D\in\mathcal H_{2g+2}}\chi_D(h),
\qquad 0\le n\le g,
$$
and each $S_n$ is obtained from the generating series of
Lemma~\ref{lem:sigma}. With
$\mathcal M_{\mathrm{prin}}$ as in \eqref{eq:Mprin-intro}, the table below lists
$M(g)-\mathcal M_{\mathrm{prin}}(g)$, the predicted secondary term
$\mathcal S(g)$ of \eqref{eq:S-intro}, their ratio, and the normalized
remainder $\bigl(M(g)-\mathcal M_{\mathrm{prin}}(g)-\mathcal S(g)\bigr)q^{-g/2}$.
The Euler products $\mathscr P(1)$, $(\mathscr P'/\mathscr P)(1)$ and
$\prod_P\mathcal D_P(z_k,qz_k)$ were evaluated from the exact counts
$\#\mathcal P_d$.

\begin{center}
\renewcommand{\arraystretch}{1.15}
\begin{tabular}{c c r r r r}
$q$ & $g$ & $M-\mathcal M_{\mathrm{prin}}$ & $\mathcal S(g)$ & ratio &
$(M-\mathcal M_{\mathrm{prin}}-\mathcal S)q^{-g/2}$\\
\hline
$3$ & $6$ & $21.98$ & $19.28$ & $1.140$ & $0.100$\\
$3$ & $7$ & $-179.66$ & $-169.78$ & $1.058$ & $-0.211$\\
$3$ & $8$ & $191.51$ & $172.01$ & $1.113$ & $0.241$\\
$5$ & $4$ & $-76.71$ & $-79.34$ & $0.967$ & $0.105$\\
$5$ & $5$ & $155.17$ & $151.43$ & $1.025$ & $0.067$\\
$5$ & $6$ & $50.50$ & $53.04$ & $0.952$ & $-0.020$\\
$7$ & $4$ & $-291.39$ & $-293.13$ & $0.994$ & $0.036$\\
$7$ & $5$ & $664.29$ & $686.72$ & $0.967$ & $-0.173$\\
$11$ & $4$ & $-1470.41$ & $-1461.98$ & $1.006$ & $-0.070$\\
$13$ & $4$ & $-2585.68$ & $-2568.95$ & $1.007$ & $-0.099$\\
\end{tabular}
\end{center}

The normalized remainders stay bounded, in agreement with the error term in
\eqref{eq:main}. The sign pattern is the point at issue: in every row of the
table the quantity $M-\mathcal M_{\mathrm{prin}}$ is positive at
$g\equiv0,2\bmod 3$ and large negative at $g\equiv1\bmod3$, and for
$q\in\{7,11,13\}$ the predicted $\mathcal S(g)$ reproduces it to within one
percent at $g=4$.
A lower-order term of the form $q^{2g/3}\bigl(c_1 g+c_0+\pi(g)\bigr)$
with $c_1,c_0$ constant and $\pi$ of period two would force
$q^{-2(g+2)/3}\bigl(M-\mathcal M_{\mathrm{prin}}\bigr)(g+2)-
q^{-2g/3}\bigl(M-\mathcal M_{\mathrm{prin}}\bigr)(g)$ to be asymptotically
constant; for $q=7$ the values of this difference at $g=1,2,3$ are
$0.098$, $-2.487$ and $0.469$, which is not consistent with any such shape.
\end{remark}


\end{document}